%
%
\documentclass[10pt]{article}
\usepackage{hyperref}
\usepackage{amssymb}
\usepackage{amsmath}
\usepackage{enumerate}
\usepackage[shortlabels]{enumitem}
\newcommand{\bone}{\mathbf{1}}
\input{liemacs10.sty} 
\usepackage[all]{xy}

\addtolength\textwidth{3cm}
\addtolength\textheight{1cm}
\addtolength\oddsidemargin{-2cm}
\addtolength\evensidemargin{-2cm}
\numberwithin{equation}{section}

\usepackage{xcolor}

\def\hypothesisname{Hypothesis}
\newtheorem{hypo}[theor]{\hypothesisname}



\newcommand\be{{\bf{e}}}

\newcommand{\ee}{\mathrm{e}}
\newcommand{\ie}{\mathrm{i}}

\newcommand{\sF}{{\sf F}}
\newcommand{\sH}{{\sf H}}

\newcommand{\sV}{{\tt V}}
\newcommand{\sE}{{\tt E}}

\renewcommand{\phi}{\varphi}

\renewcommand\mlabel{\label}

\begin{document}

\title{Crowned Lie groups and nets of real subspaces}
\author{Daniel Belti\c t\u a\footnote{Institute of Mathematics ``Simion Stoilow'' of the Romanian Academy,
		P.O. Box 1-764, Bucharest, Romania. 
		\texttt{Email}: Daniel.Beltita@imar.ro, beltita@gmail.com} 
	\quad and \quad 
	Karl-Hermann Neeb\footnote{Department Mathematik, Friedrich-Alexander-Universit\"at, Erlangen-N\"urnberg,
		Cauerstrasse 11, 91058 Erlangen, Germany. 
		\texttt{Email}: neeb@math.fau.de}}
	\date{June 19, 2025}

\maketitle

\begin{abstract}
  We introduce the notion of a complex crown domain  for a connected Lie
  group $G$, and we use analytic extensions of orbit maps of antiunitary
representations to these domains to construct nets of real subspaces
on $G$ that are isotone, covariant and satisfy the Reeh--Schlieder
and Bisognano--Wichmann conditions from Algebraic Quantum Field Theory.
This provides a unifying perspective on various constructions of such nets. 
The representation theoretic properties of different crowns
are discussed in some detail for the non-abelian $2$-dimensional
Lie group $\Aff(\R)$. We also characterize the existence of nets
with the above properties by a regularity condition
in terms of an Euler element in the Lie algebra $\g$ and show that
all antiunitary representations of the split oscillator group
have this property. 
\end{abstract}

\setcounter{tocdepth}{1}
\tableofcontents 

\vspace{8mm}

\section*{Introduction}

Nets of real subspaces of a complex Hilbert space 
occur naturally in Algebraic Quantum Field Theory (AQFT)
in connection with nets of algebras of local observables
(\cite{MN21, MN24, FNO25a, NO21}) 
and play a central role in new constructions in AQFT (\cite{MMTS21, LL15}). 
The main motivation for this article was to find minimal
structures that allow to construct well-behaved nets of real subspaces 
from antiunitary representations of Lie groups,
that are not necessarily reductive, and to develop a unifying
perspective on the constructions of such nets in
\cite{NO21, NOO21, MN24, FNO25a}. 
In this context, standard subspaces play a key role: 
A closed real subspace $\sV$ of a complex Hilbert space
$\cH$ is called a {\it standard subspace} if
$\sV + \ie \sV$ is dense in $\cH$ 
 ($\sV$ is {\it cyclic}) 
 and $\sV \cap \ie \sV = \{0\}$ ($\sV$ is {\it separating}).
 Then  the complex conjugation $T_\sV$ on $\sV + \ie \sV$
 (the Tomita involution) is closed and densely defined in $\cH$,
 hence has a polar decomposition 
$T_\sV = J_\sV \Delta_\sV^{1/2}$, where
$J_\sV$ is a conjugation (an antilinear involutive isometry)
and $\Delta_\sV$ is a positive selfadjoint operator
satisfying $J_\sV \Delta_\sV J_\sV = \Delta_\sV^{-1}$.
The unitary one-parameter group $U_t := \Delta_\sV^{\ie t}$ 
(the {\it modular group of $\sV$}) preserves the subspace $\sV$
and commutes with $J_\sV$. 
We refer to \cite{Lo08, NO17} for more on standard subspaces
and antiunitary representations in general.

If $\tau$ is an involutive automorphism of a Lie 
group $G$ and $G_\tau := G \rtimes_{\tau} \{\pm 1 \}$, then an
{\it antiunitary representation $(U,\cH)$ of $G_\tau$}
is a homomorphism $U \: G_\tau \to \AU(\cH)$
into the group of unitary or antiunitary operators on $\cH$, 
for which $U\res_G$ is a (continuous) unitary representation
and $U(\tau)$ is a {\it conjugation}. The preceding discussion
shows that antiunitary representations
of $\R_{\id} \cong \R \times \{\pm 1 \} \cong \R^\times$ are in one-to-one
correspondence with standard subspaces. More generally,
if $(U,\cH)$ is an antiunitary representation
of $G_\tau$ and $h \in \g$ (the Lie algebra of $G$) is fixed by 
$\tau$, then we obtain a standard subspace
\begin{equation}
  \label{eq:vhu-intro}
 \sV(h,U) := \Fix(J \Delta^{1/2}) \quad
  \mbox{ for } \quad J = U(\tau)
  \quad \mbox{ and }\quad
 \Delta :=  \ee^{2\pi \ie \partial U(h)}. 
\end{equation}
where $\partial U(h) = \derat0  U(\exp th)$ is the skew-adjoint
infinitesimal generator of the unitary one-parameter group $U_h(t) :=
U(\exp th)$. This standard subspace has another description
in terms of a condition resembling the KMS (Kubo--Martin--Schwinger)
condition for states of operator algebras (cf.\ \cite{BR96}). 
The standard subspace $\sV(h,U)$ coincides with
the set of all vectors $\xi \in \cH$
for which the orbit map $U^\xi_h(t) := U_h(t)\xi$ 
extends analytically 
to a continuous map $U^\xi \: \oline{\cS_\pi} \to \cH$ on
the closure of the strip $\cS_\pi = \{  z \in \C \: 0 < \Im z < \pi\}$,
satisfying
$U^\xi_h(\pi \ie) = J \xi$. Then $U^\xi(\pi \ie/2) \in \cH^J$ is a
$J$-fixed vector whose orbit map extends to the strip
\begin{equation}
  \label{eq:spi}
  \cS_{\pm \pi/2} := \{ z \in \C \: |\Im z| < \pi/2\} \subeq \C
\end{equation}
(\cite[Prop.~2.1]{NOO21}). 

To extend this one-dimensional picture to
higher dimensional Lie groups $G$, one has to specify complex
manifolds $\Xi$ (crown domains), containing $G$ as a totally real submanifold,
to which orbit maps of $J$-fixed analytic vectors extend.
These domains $\Xi$ generalize the strip 
$\cS_{\pm \pi/2}$, corresponding to the one-dimensional Lie group $G = \R$.
For semisimple Lie groups complex crown domains, such as
  those obtained from crowns of Riemannian symmetric spaces $G/K$ 
of non-compact type  as their inverse images
$\Xi_{G_\C}$ in the complexified group $G_\C$,  
are particular well-known examples that have been used in
harmonic analysis (cf.\ \cite{AG90}, \cite{KSt04}, \cite{FNO25a}).
But they have never  been studied systematically from the
general perspective we introduce here. 
We call $\Xi$ a {\it crown domain of $G$}. 

To understand natural requirements for crown domains
that are needed for the constructions of local nets,
we take a closer look at how
standard subspaces and nets of real subspaces of a complex Hilbert space 
occur in AQFT. 
A family $\sH(\cO)$ of closed real subspaces of $\cH$, indexed by open subsets
$\cO$ of a $G$-space~$M$, is called a {\it net of real subspaces on $M$}.
For such a net we consider the following properties
with respect to an antiunitary representation $(U,\cH)$ of $G_\tau$: 
\begin{itemize}
\item[(Iso)] {\bf Isotony:} $\cO_1 \subeq \cO_2$ 
implies $\sH(\cO_1) \subeq \sH(\cO_2)$ 
\item[(Cov)] {\bf Covariance:} $U(g)\sH(\cO) = \sH(g\cO)$ for $g \in G$. 
\item[(RS)] {\bf Reeh--Schlieder property:} 
$\sH(\cO)$ is cyclic  if $\cO \not=\eset$. 
\item[(BW)] {\bf Bisognano--Wichmann property:}
There exist $h \in \g^\tau$ and an open connected
subset $W \subeq M$ (invariant under $\exp(\R h)$), 
such that $\sH(W)$ is the standard subspace $\sV = \sV(h,U)$
specified by  $\Delta_\sV = \ee^{2\pi \ie \partial U(h)}$ and $J_\sV = U(\tau)$
as in \eqref{eq:vhu-intro}. 
\end{itemize}

The key idea for constructing such nets of real subspaces
is to extend the description of $\sV(h,U)$ in
terms of a KMS condition on the complex strip to general Lie groups.
This leads us to natural requirements on $\Xi$ and~$h$ described below.
From \cite[Thm.~3.1]{MN24} we know that a necessary condition
for the existence of non-trivial nets satisfying (Iso), (Cov), (RS) and
(BW) is that $h \in \g$ is an {\it Euler element}, i.e., $0 \not=\ad h$
is diagonalizable with its eigenvalues contained in $\{1,0,-1\}$.
Then its eigenspaces $\g_\lambda = \g_\lambda(h)$ define a $3$-grading
of $\g$, where $\g_1$ or $\g_{-1}$ may be trivial. 
We therefore assume from the outset that $h \in \g$ is an
Euler element. Then $\tau_h^\g := \ee^{\pi \ie \ad h}$ is an involutive
automorphism of $\g$, 
mapping
$x = x_1 + x_0 + x_{-1}$ with $x_j \in \g_j$ to
$-x_1 + x_0 - x_{-1}$, and we assume that it integrates to an
involutive automorphism $\tau_h$ on $G$, so that we can form
the group
$G_{\tau_h} = G \rtimes \{\bone,\tau_h\} :=  G \rtimes_{\tau_h} \{\pm 1\}$
  (cf.\ \eqref{eq:gtauh} below). 
  Our requirements on
the complex manifold $\Xi$ are: 
\begin{itemize}
\item[\rm(Cr1)] The natural action of $G_{\tau_h}$ on $G$ by
  $(g,1).x = g x$ and $(e,-1).x = \tau_h(x)$
    for $g,x \in G$   extends to an
  action on $\Xi$, such that $G$ acts by holomorphic maps  
  and $\tau_h$ by an antiholomorphic involution, denoted $\oline\tau_h$. 
\item[\rm(Cr2)] There exists a connected open $e$-neighborhood
  $W^c \subeq \Xi^{\oline\tau_h}$
  (the set of $\oline\tau_h$-fixed points), such that, for every $p \in W^c$, the orbit map
  \[ \alpha^p \: \R \to \Xi, \quad  \alpha^p(t) := \exp(th).p \]
  extends to a holomorphic map
  $\cS_{\pm \pi/2} \to \Xi$. 
\item[\rm(Cr3)] The map $\eta_G \: G \to G_\C$ 
  extends to a holomorphic ($G_{\tau_h}$-equivariant) 
  map $\eta_\Xi \: \Xi \to G_\C$ which is a covering of the open subset
  $\Xi_{G_\C} := \eta_\Xi(\Xi)$. 
\end{itemize}

We combine this data  in the triple $(G,h,\Xi)$, 
which we call a {\it crowned Lie group}. Crowned Lie groups
form a category in the obvious way.  
For a crowned Lie group $(G,h,\Xi)$
and an antiunitary
representation $(U,\cH)$ of $G_{\tau_h}$, we write
\[ \cH^\omega(\Xi) \subeq \cH \]
for the subspace of those analytic vectors for $G$ whose orbit map extends
to $\Xi$. The non-triviality of this space
imposes restrictions on $\Xi$.
This follows already from the discussion in the
last section of \cite{BN24}, where we have seen 
that, for the group $G = \Aff(\R)_e \cong \R \rtimes \R$, the domain
$\Xi$ must be contained in $\C \times \cS_{\pm\pi/2}$ 
($\subeq \C\rtimes\C=G_\C$) for our constructions of
nets to work (cf.\ Theorem~\ref{thm:4.9}). 
To specify the necessary boundary behavior of the 
extended orbit maps on $\Xi$, we write 
\begin{equation}
  \label{eq:hjtemp}
  \cH^J_{\rm temp} \subeq \cH^J
\end{equation}
for the dense real linear subspace of the $J$-fixed vectors
$v$, for which the orbit map $U^v_h(t) = U(\exp th)v$
extends to a holomorphic map $\cS_{\pm\pi/2} \to \cH$,  and the limit
\[ \beta^+(v) := \lim_{t \to -\pi/2} U^v_h(\ie t) \]
exists in the subspace $\cH^{-\infty}_{U_h}$ 
of distribution vectors of the one-parameter group~$U_h$
in the weak-$*$ topology 
(see Appendix~\ref{app:c}). It always exists in the larger space
$\cH^{-\omega}_{U_h}$ of hyperfunction vectors, so that the main
point is the addition regularity (temperedness), related to the
weak convergence after pairing with elements of~$\cH^\infty_{U_h}$.
For any real linear subspace
\begin{equation}
  \label{eq:fincl}
  \sF \subeq \cH^\omega(\Xi) \cap \cH^J_{\rm temp}, 
\end{equation}
we then obtain a real subspace
\begin{equation}
  \label{eq:def3}
  \sE := \beta^+(\sF) \subeq \cH^{-\infty}_{U_h} \subeq \cH^{-\infty},
\end{equation}
and from this space we construct a net of real subspaces indexed
by open subsets of $G$ as follows.
To an open subset $\cO \subeq G$, we associate the closed real subspace 
\begin{equation}
  \label{eq:HE}
  \sH_\sE^G(\cO) := \oline{\spann_\R U^{-\infty}(C^\infty_c(\cO,\R))\sE}.
\end{equation}
The operators $U^{-\infty}(\phi)$, $\phi \in C_c^\infty(G,\R)$,  map 
$\cH^{-\infty}$ into $\cH$ because they are adjoints of
continuous operators $U(\phi) = \int_G \phi(g) U(g)\, dg
\colon \cH \to \cH^\infty$
  (Appendix~\ref{app:c}).
Accordingly, the closure in \eqref{eq:HE} is taken with respect
to the topology of $\cH$. 
  The net $\sH^G_\sE$ trivially satisfies (Iso) and (Cov),
  and our first main result (Theorem~\ref{thm:4.9}) asserts that,
  if $\sF$ is $G$-cyclic in the sense that $U(G)\sF$
    spans a dense subspace, 
  for $\sE$ as in \eqref{eq:def3},
  the net $\sH^G_\sE$ on $G$ also satisfies (RS) and (BW). 

  To pass from a net on $G$ to a net on
  a homogeneous space $M = G/H$ is easy: With the projection map
$q \colon G \to M$, we obtain a ``pushforward net''
\begin{equation}
  \label{eq:pushforward}
  \sH^M_\sE(\cO) := \sH_\sE^G(q^{-1}(\cO)).
\end{equation}
It automatically satisfies (Iso),  (Cov) and (RS),
but not always (BW). 
If $\sE$ is invariant under $U^{-\infty}(H)$, then
$\sH^G_\sE(\cO)= \sH^G_\sE(\cO H)$ for any open subset
$\cO \subeq G$ (\cite[Lemma~2.11]{NO21}), and this implies that 
(BW) holds for $W^M := q(W^G) \subeq G/H$.
So the crucial point
is to find $\sE$ in such a way that it is $H$-invariant. 
In \cite{FNO25a} this was done for semisimple groups
and non-compactly causal symmetric spaces~$G/H$.
Here the Kr\"otz--Stanton Extension Theorem \cite{KSt04} 
  and Simon's generalization \cite{Si24}
  for non-linear groups were used to verify the existence of the
  limits $\beta^+(v)$. 
We plan to explore nets on homogeneous space in a subsequent project.
Here we focus on the group case. 

  Whenever a net $(\sH(\cO))_{\cO \subeq G}$ of real subspaces
  exists for an antiunitary representation $(U,\cH)$ of $G_{\tau_h}$, 
  such that (Iso), (Cov), (RS) and (BW) are satisfied, then the
  representation $U$ is {\it $h$-regular} in the sense that
there exists an identity neighborhood $N \subeq G$
  such  that, for the associated standard subspace
  $\sV := \sV(h,U)$ as in (BW), the intersection $\sV_N
  := \bigcap_{g \in N} U(g) \sV$ is still  standard
  (\cite[\S 4]{MN24}).
  We conjecture that this property is satisfied for {\bf all}
  antiunitary representations of $G_{\tau_h}$.
  This conjecture has been confirmed for reductive 
  groups in \cite[Thm.~4.23]{MN24}, and in \cite{MN24}
  it is proved for a
  variety of other classes of Lie groups under suitable requirements.
  In Theorem~\ref{thm:reg-net} we show that $h$-regularity
  is actually equivalent to the existence of an open subset $W \subeq G$,
  for which a net satisfying (Iso), (Cov), (RS) and (BW) exists.
  The smallest pair $(G,h)$ for which all  the $h$-regularity
  criteria from \cite{MN24} fail, is the split 
  oscillator group  of dimension~$4$.
  This example   is discussed in Section~\ref{sec:5},
  where we show in particular that the regularity conjecture also
  holds for this group and all its Euler elements.\\

  The {\bf structure of this paper} is as follows. 
  In Section~\ref{sec:1} we discuss our axioms for crowned Lie groups
  and describe some natural constructions of these structures,
  including the semisimple crown domains and complex Olshanski semigroups.
  Section~\ref{sec:2} is devoted to the construction of nets of
  real subspaces satisfying (Iso), (Cov), (RS) and (BW)
  from antiunitary representations $(U,\cH)$, for which the space
  $\cH^\omega(\Xi) \cap \cH^J_{\rm temp}$ is $G$-cyclic
  (Theorem~\ref{thm:4.9}). Major classes of groups to which
  our setting applies are semisimple Lie groups and
  Lie groups whose Lie algebra contains a pointed generating invariant
  cone $C$, so that we can form complex Olshanski semigroups
  $G \exp(\ie C)$. This provides a unification of the analytic
  extension to complex semigroups that
  requires positive spectrum conditions but lives on rather large domains 
 (\cite{Ne00})   and the analytic extension arguments
 related to crown domains of symmetric spaces applying to larger
 classes of representations (\cite{KSt04}). 
  We explain in Section~\ref{sec:2} how these two classes
  connect to our setting and also add a brief subsection on 
  the Poincar\'e group, acting on Minkowski space. 
  The smallest non-abelian example, the affine group of the real line, 
  is discussed in Section~\ref{sec:3}. We show that there is a natural 
  crown $\Xi_1$ of $G$ that is too large because 
  $\cH^\omega(\Xi_1) \cap \cH^J_{\rm temp}$ vanishes for infinite-dimensional
  irreducible representations, but there exists a smaller crown domain 
  $\Xi_2$ for which the assumptions of Theorem~\ref{thm:4.9} are satisfied
  for all antiunitary representations.

In Section~\ref{sec:4} $h$-regularity 
 is shown to be equivalent to the existence of nice nets
  (Theorem~\ref{thm:reg-net}), 
  and in Section~\ref{sec:5} we show that all representations
  of the split oscillator group are $h$-regular, which verifies our
  general conjecture for this group. We conclude with a short
  section on perspectives and open problems. \\

\nin {\bf Notation:}
\begin{itemize}
\item We write $e \in G$ for the identity element in the Lie group~$G$ 
and $G_e$ for its identity component. 
\item For $x \in \g$, we write $G^x := \{ g \in G \: \Ad(g)x = x \}$ 
for the stabilizer of $x$ in the adjoint representation 
and $G^x_e = (G^x)_e$ for its identity component. 
\item For $r > 0$ we denote the corresponding horizontal strips in $\C$
  by
  \[ \cS_r := \{ z \in \C \colon 0 < \Im z < r\} \quad \mbox{ and } \quad 
    \cS_{\pm r} := \{ z \in \C \colon |\Im z | < r\}.\]
\item For a unitary representation $(U,\cH)$ of $G$ we write:
  \begin{itemize}
  \item[\rm(a)] $\partial U(x) = \derat0 U(\exp tx)$ for the infinitesimal
    generator of the unitary one-parameter group $(U(\exp tx))_{t \in\R}$
    in the sense of Stone's Theorem. 
  \item[\rm(b)] $\dd U \colon \cU(\g_\C) \to \End(\cH^\infty)$ for the representation of
    the enveloping algebra $\cU(\g_\C)$ of $\g_\C$ on
    the space $\cH^\infty$ of smooth vectors. Then
    $\partial U(x) = \oline{\dd U(x)}$ for $x \in \g$. 
  \end{itemize}
\end{itemize}

\nin {\bf Acknowledgment:} 
Part of this work has been carried out during a research visit of the
authors at the Erwin-Schr\"odinger-Institut in Vienna during
the Thematic Programme ``Infinite-dimensional Geometry:
Theory and Applications''.

DB acknowledges partial financial support from 
the Research Grant GAR 2023 (code 114), supported from the Donors' Recurrent Fund of the Romanian Academy, managed by the``PA\-TRI\-MO\-NIU'' Foundation. 

KHN also acknowledges support of the Institut Henri Poincar\'e
(UAR 839 CNRS-Sorbonne Universit\'e), and LabEx CARMIN (ANR-10-LABX-59-01).

We are most grateful to Jonas Schober for providing the example discussed
in Subsection~\ref{subsec:schober}.
We are also indebted to Michael Preeg and Tobias Simon for comments
on a first draft of this paper. 

\section{Crowned Lie groups}
\mlabel{sec:1} 

We consider the following setting:
\begin{itemize}
\item $G$ is a connected Lie group whose universal complexification
  $\eta_G \: G \to G_\C$ has discrete kernel.
  If $G$ is simply connected, then $G_\C$ is the simply connected
  group with Lie algebra $\g_\C$, and this condition is satisfied
  (cf.\ \cite[Thm.~15.1.4(i)]{HiNe12}). 
\item $h \in \g$ is an Euler element, 
i.e., $(\ad h)^3=\ad h \not=0$, 
for which the associated involution $\tau_h^\g = \ee^{\pi \ie \ad h}$
of $\g$ integrates to an involutive automorphism
$\tau_h$ of $G$. This is always the case if  $G$ is simply connected.
We write
  \begin{equation}
    \label{eq:gtauh}
G_{\tau_h} = G \rtimes \{\bone,\tau_h\} :=  G \rtimes_{\tau_h} \{\pm 1\}
\end{equation}
for the corresponding semidirect product
and abbreviate $\tau_h = (e,-1)$ for the corresponding element of
  $G_{\tau_h}$. 
The universality of $G_\C$ implies the existence of a unique antiholomorphic involution $\oline\tau_h$ on $G_\C$ satisfying $\oline\tau_h \circ \eta_G = \eta_G \circ \tau_h$.
\end{itemize}

\subsection{Axioms for crown domains of Lie groups} 
\mlabel{sec:crownlie}

We now present an axiomatic specification of crown domains
of $G$ to which orbit maps of $J$-fixed vectors in antiunitary
representations may extend in such a way that boundary values
lead to nets of real subspaces on $G$ as in \eqref{eq:HE}. 

\begin{defn} \mlabel{def:1.1}
  (a) A {\it $(G,h)$-crown domain} is a connected
  complex manifold $\Xi$ containing
$G$ as a closed totally real submanifold, such that the following conditions
are satisfied:
\begin{itemize}
\item[\rm(Cr1)] 
The natural action of $G_{\tau_h}$ on $G$
by  $(g,1).x = gx$ and $(e,-1).x = \tau_h(x)$ for $g,x \in G$, 
  extends to an
  action on $\Xi$, such that $G$ acts by holomorphic maps  
  and $\tau_h$ by an antiholomorphic involution, 
  again denoted $\oline\tau_h$. 
  These extensions are unique because $G$ is totally real in $\Xi$.
\item[\rm(Cr2)] 
There exists an $e$-neighborhood 
$W^c$ in $\Xi^{\oline\tau_h}$  
(the set of $\oline\tau_h$-fixed points) such that, for every $p \in W^c$, 
the orbit map
  \[ \alpha^p \: \R \to \Xi, \quad  \alpha^p(t) := \exp(th).p \]
  extends  to a holomorphic map
  $\cS_{\pm \pi/2} \to \Xi$. 
\item[\rm(Cr3)] 
The map $\eta_G \: G \to G_\C$ 
  extends to a holomorphic ($G_{\tau_h}$-equivariant) 
  map $\eta_\Xi \: \Xi \to G_\C$ which is a covering of the open subset
  $\Xi_{G_\C} := \eta_\Xi(\Xi)$ 
so we have the commutative diagram 
  \[\xymatrix{
  G\ \ar@{^{(}->}[r] \ar[dr]_{\eta_G} & \Xi \ar[d]^{\eta_\Xi} \\
 & \Xi_{G_\C}  }\]
\end{itemize} 

\nin (b) We call the 
triple $(G,h,\Xi)$   a {\it crowned Lie group}. 
  For crowned Lie groups 
$(G_j, h_j, \Xi_j)$, 
  $j = 1, 2$, a
  {\it morphism of crowned Lie groups}
  is a holomorphic map $\phi \: \Xi_1 \to \Xi_2$,
  restricting to a Lie group morphism
  $\phi_G \: G_1 \to G_2$ such that
  \[\L(\phi_G) h_1 = h_2    .\]
  This implies that $\phi_G \circ \tau_{h_1} = \tau_{h_2} \circ \phi_G$,
  and, by analytic continuation, 
  $\phi$ intertwines the $G_{1,\tau_{h_1}}$-action
  on $\Xi_1$ with the $G_{2,\tau_{h_2}}$-action on $\Xi_2$.
\end{defn}

\begin{rem}
	\mlabel{remW}
	 (On  the condition (Cr2)) 
Let us denote by $\cW$ the set of all points $p\in \Xi^{\oline\tau_h}$ 
whose orbit map $\alpha^p$ has the holomorphic extension property 
referred to in the axiom (Cr2). 
 The fixed point set $\Xi^{\oline\tau_h}$ is invariant under the
  connected subgroup $G^h_e$ which commutes with the involution
  $\oline\tau_h$ on $\Xi$. As it also commutes with $\exp(\R h)$, 
 the set $\cW$ is also $G^h_e$-invariant. 
  This shows that, if $e$ belongs to the interior of $\cW$, then 
  there exists a $G^h_e$-invariant connected open subset $W^c\subseteq\cW$ with $e\in W^c$.

  If $\Omega' \subeq \g^{-\tau_h^\g}$ is a convex open $0$-neighborhood 
  with $\exp(\ie\Omega') \subeq \eta_\Xi(\Xi)$, then the exponential
  function $\ie\Omega' \to \eta_\Xi(\Xi)$ lifts uniquely to an analytic map
  \begin{equation}
    \label{eq:exp-omega'}
 \exp \: \ie\Omega' \to \Xi \quad \mbox{ with } \quad 
 \exp(0) = e, \ \mbox{ the unit element in } G \subeq \Xi.
  \end{equation}
 Since $\ie\g^{-\tau_h^\g}\subseteq\g_\C^{\oline\tau_h^\g}$, we have 
  $\exp(\ie\Omega') \subeq \Xi^{\oline\tau_h}$, so that
  any $G^h_e$-invariant domain $W^c$ contains an open subset of the form
  $G^h_e.\exp(\ie\Omega')$. 
  For this reason, we may assume that $W^c$ is of this form. 

  For any element $x = x_1 + x_{-1} \in \g^{-\tau_h^\g}$ with
  $x_{\pm 1} \in \g_{\pm 1}(h)$, we have
  \[ \zeta(\ie x) = x_1 - x_{-1} \in \g^{-\tau_h^\g}\quad \mbox{ for } \quad
    \zeta := e^{-\frac{\pi \ie}{2} \ad h} \in \Aut(\g_\C).\]
  We thus associate to $W^c = G^h_e.\exp(\ie \Omega')$, the open subset
  \begin{equation}
    \label{eq:defwg}
 W^G := G^h_e.\exp(\Omega)  \subeq G\quad \mbox{ with }\quad
 \Omega := \ie \zeta(\Omega') \subeq \g^{-\tau_h^\g}.
  \end{equation}
  \end{rem}

  \begin{rem} (On the existence of crown domains)
    If $\eta_G$ is injective, we may identify $G$ with a closed subgroup
  of $G_\C$. In this case $\Xi$ is an open subset of $G_\C$, invariant
  under the $G_{\tau_h}$-action and 
  (Cr3) 
  is trivially satisfied. 
  Typical domains with this property are easily constructed
  as products $\Xi := G \exp(\ie \Omega)$, where $\Omega \subeq \g$
  is an open convex 
  $0$-neighborhood invariant under $-\tau_h^\g$, for which the
  polar map $G \times \Omega \to G_\C, (g,x) \mapsto g \exp x$ is a
  diffeomorphism onto an open subset. 
  Then $\Xi^{\oline\tau_h}
  = G^{\tau_h}.\exp(\ie \Omega^{-\tau_h^\g}) \supeq G^h_e.\exp(\ie \Omega^{-\tau_h^\g})$,
  so that any sufficiently small open convex $0$-neighborhood
  $\Omega' \subeq \Omega^{-\tau_h^\g}$ specifies 
  an open neighborhood
  $W^c := G^h_e.\exp(\ie\Omega')$ of $e\in \Xi^{\oline\tau_h}$,  
  hence   (Cr2) is satisfied.
  This condition follows from 
  $\exp(\ie t h) \exp(\ie\Omega') \subeq \Xi$ for $|t| \leq \pi/2$
and $\Omega'$ small enough. 
  Therefore crown domains $\Xi$ satisfying 
(Cr1--3) 
  exist in abundance. 
\end{rem}

\begin{ex} For $G = \R\subeq \C = G_\C$
  and $h = 1$ (a basis element in $\g = \R$),
any strip  
  \[ \Xi = \cS_{\pm r} = \{ z \in \C \: |\Im z| < r \} \subeq \C = G_\C, \qquad
    r  \geq \pi/2, \]
  is a crown domain for $G = \R$ with $W^c = G$.
  In this case $\tau_h = \id$, $\oline \tau_h(z) = \oline z$
    and $G_{\tau_h} \cong (\R^\times, \cdot)$.  
\end{ex}

Below we shall encounter various kinds of non-abelian examples,
see in particular Examples~\ref{ex:affine-group} and~\ref{ex:olshanski}. 

\subsection{Constructions of crown domains}

The following lemma is a tool for constructing crown domains
that we shall use in various contexts.

\begin{lem} \mlabel{lem:Mxi} Suppose that $\eta_G$ is injective
  and that $G_{\C, \oline\tau_h}$ acts on the complex manifold $M$
  in such a way that $G_\C$ acts by holomorphic maps and
  $\oline\tau_h$  by an antiholomorphic map $\tau_{M}$.
Let $\Xi_{M} \subeq M$ be a $G_{\tau_h}$-invariant connected open subset  
   for which
  there exists an open subset $W^{M,c}\subeq \Xi_M^{\tau_M}$ satisfying 
  \[ W^{M,c} \subeq \{ m \in \Xi_M^{\tau_M} \:
    \exp(\cS_{\pm \pi/2}h).m \subeq \Xi_M \}. \]
Then, for $m_0 \in W^{M,c}$,  the open subset 
  \[ \Xi := \{ g \in G_\C \: g.m_0 \in \Xi_{M}\}\]
  satisfies {\rm(Cr1-3)} with the open subset 
$W^c := \{ g \in G_\C^{\oline\tau_h}  \: g.m_0 \in W^{M,c}\}
  \subseteq\Xi^{\oline\tau_h}.$ 
\end{lem}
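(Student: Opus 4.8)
The plan is to verify the three axioms directly, exploiting the fact that, since $\eta_G$ is injective, $G$ is a closed totally real subgroup of $G_\C$ and $\Xi$ is simply an open subset of $G_\C$. First I would record the elementary geometry. Because $m_0 \in W^{M,c} \subeq \Xi_M$ and $\Xi_M$ is $G$-invariant, we have $g.m_0 \in \Xi_M$ for all $g \in G$, so $G \subeq \Xi$; and $\Xi$, being the preimage of the open set $\Xi_M$ under the holomorphic orbit map $G_\C \to M,\ g \mapsto g.m_0$, is open in $G_\C$. Thus $G$ sits inside $\Xi$ as a closed totally real submanifold. For (Cr3) there is then nothing to do beyond the standard observation that $\eta_G$ is the inclusion $G \into G_\C$: one takes $\eta_\Xi$ to be the inclusion $\Xi \into G_\C$, which is holomorphic, equivariant for the action defined below, and a biholomorphism onto the open set $\Xi_{G_\C} = \Xi$, hence a (one-sheeted) covering. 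If connectedness of $\Xi$ is insisted upon, one replaces $\Xi$ by the connected component containing $G$ and intersects $W^c$ with it, which affects none of the arguments.

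For (Cr1), I would define the $G_{\tau_h}$-action on $\Xi$ as the restriction of the $G_{\C,\oline\tau_h}$-action on $G_\C$: the subgroup $G$ acts by left translations and $\tau_h$ acts by $\oline\tau_h$. Left translations preserve $\Xi$ since $(gx).m_0 = g.(x.m_0) \in \Xi_M$ whenever $x \in \Xi$, $g \in G$, by $G$-invariance of $\Xi_M$, and they are holomorphic as restrictions of holomorphic maps on $G_\C$. That $\oline\tau_h$ preserves $\Xi$ is the one structural point: using that $m_0$ is $\tau_M$-fixed (as $m_0 \in W^{M,c} \subeq \Xi_M^{\tau_M}$) together with the intertwining relation $\oline\tau_h(x).m_0 = \oline\tau_h(x).\tau_M(m_0) = \tau_M(x.m_0)$ and the $\tau_M$-invariance of $\Xi_M$, one gets $\oline\tau_h(x).m_0 = \tau_M(x.m_0) \in \Xi_M$, i.e.\ $\oline\tau_h(x) \in \Xi$. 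These maps restrict to the prescribed action on $G$ and are unique because $G$ is totally real.

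The heart of the lemma is (Cr2), where the strip-extension hypothesis on $W^{M,c}$ must be transported from $M$ to $\Xi$. First I would check that $W^c = \{g \in G_\C^{\oline\tau_h} \: g.m_0 \in W^{M,c}\}$ is an open $e$-neighborhood in $\Xi^{\oline\tau_h}$: it contains $e$ since $\oline\tau_h(e) = e$ and $e.m_0 = m_0 \in W^{M,c}$; it lies in $\Xi^{\oline\tau_h}$ since $g.m_0 \in W^{M,c} \subeq \Xi_M$ forces $g \in \Xi$; and it is the preimage of the open set $W^{M,c}$ under the continuous orbit map, hence open. Then, for $p \in W^c$, I extend $\alpha^p$ by the holomorphic map $\tilde\alpha^p \: \cS_{\pm\pi/2} \to G_\C,\ z \mapsto \exp(zh)\,p$, where $\exp(zh) \in G_\C$ is the holomorphic continuation of $t \mapsto \exp(th)$ to complex parameters and the product is taken in $G_\C$. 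Writing $m := p.m_0 \in W^{M,c}$ and using $(\exp(zh)p).m_0 = \exp(zh).m$, the defining property $\exp(\cS_{\pm\pi/2}h).m \subeq \Xi_M$ of $W^{M,c}$ shows $\tilde\alpha^p(z) \in \Xi$ for all $z \in \cS_{\pm\pi/2}$. Since $\Xi$ is open in $G_\C$, $\tilde\alpha^p$ is a holomorphic map into $\Xi$ restricting to $\alpha^p$ on $\R$, which is precisely (Cr2).

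The main obstacle, to the extent there is one, is the bookkeeping in (Cr1) and (Cr2): one must use the holomorphic extension $z \mapsto \exp(zh)$ of the Euler one-parameter group into $G_\C$ together with its action on $M$, and the intertwining relation $\tau_M \circ (g\,\cdot) = (\oline\tau_h(g)\,\cdot) \circ \tau_M$ between $\oline\tau_h$ on $G_\C$ and $\tau_M$ on $M$, taking care of which maps are holomorphic and which antiholomorphic. Once these compatibilities are in place, all three axioms reduce to the single equivalence that $g \in \Xi$ precisely when $g.m_0 \in \Xi_M$, combined with the strip hypothesis on $W^{M,c}$.
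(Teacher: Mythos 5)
Your proof is correct and follows essentially the same route as the paper's (which is a three-line verification: $\oline\tau_h$-invariance of $\Xi$ from $\oline\tau_h(m_0)=m_0$, the strip inclusion pulled back from $W^{M,c}$, and (Cr3) being redundant since $\Xi\subeq G_\C$). You merely spell out the bookkeeping in more detail, including the connectedness caveat the paper leaves implicit.
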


\begin{prf} (Cr1): Since $\oline\tau_h(m_0) = m_0$, $\oline\tau_h(\Xi) = \Xi$
  follows from the antiholomorphic action of $G_{\tau_h}$ on~$\Xi_{M}$.

  \nin (Cr2): The inclusion $\exp(\cS_{\pm \pi/2} h) W^c \subeq \Xi$
  follows from $\exp(\cS_{\pm \pi/2} h) W^{M,c} \subeq \Xi_{M}$.

  \nin (Cr3) is redundant because $G \subeq G_\C$. 
\end{prf}

\begin{ex} (The affine group of the line) \mlabel{ex:affine-group} 
  We consider the $2$-dimensional affine group of the real line 
$G = \Aff(\R)_e \cong \R \rtimes \R_+$ with
$\g = \R  x \rtimes \R h$, $x = (1,0), h = (0,1)$, so that 
$[h,x] = x$ and $\tau_h(b,a) = (-b,a)$.
A pair $(b,a) \in G$ acts on $\R$ by the affine map
$(b,a).x = b + a x$ and so does the complex affine group 
$\Aff(\C) \cong \C \rtimes \C^\times$ on the complex line~$\C$.
The antiholomorphic involution $\sigma(b,a) = (\oline b, \oline a)$
satisfies $\Aff(\C)^\sigma = \Aff(\R) \cong \R \rtimes \R^\times$,
and $G$ is the identity component of this group. Note that
$G_\C \cong \tilde\Aff(\C) \cong \C \rtimes \C$ with the universal map 
\[ \eta_G \: G \to G_\C, \quad \eta_G(b,a) = (b, \log a).\]
The antiholomorphic extension of $\tau_h$ to $\Aff(\C)$, is given by 
  \[  \oline\tau_h(b,a) = (-\oline b, \oline a)
    \quad \mbox{ with } \quad \Aff(\C)^{\oline\tau_h} = \ie \R \rtimes \R^\times.\]
The subset $\exp(\cS_{\pm \pi/2}h) = \C_r \subeq \C^\times$ is the right half-plane
  in the complex dilation group.

First, we consider in $\Aff(\C)$ the domain
\begin{equation}
  \label{eq:xi1-axb}
 \Xi_1 := \C \times \C_r, \quad \C_r = \{ z \in \C \: \Re z > 0\},  
 \quad \mbox{  with } \quad \Xi_1^{\oline\tau_h} = \ie \R \times \R_+
 = (\Aff(\C)^{\oline\tau_h})_e.
\end{equation}
Then $W_1^c := \Xi_1^{\oline \tau_h}$ satisfies 
$\exp(\cS_{\pm \pi/2} h) W_1^c \subeq  \Xi_1,$ 
so that (Cr2) is satisfied. Further $\eta_G$
extends to a biholomorphic map
\[ \eta_{\Xi_1} \: \Xi_1 \to
\C\times\cS_{\pm\pi/2} \subeq  G_\C \cong \C \rtimes \C, \quad
  \eta_G(b,a) \mapsto (b, \log a). \]
We shall see below that $\Xi_1$ is too large for our purposes 
(Theorem~\ref{thm:hardy}). 
A natural strategy to find better crown domains
is inspired by Riemannian symmetric spaces (cf.~Theorem~\ref{thm:gss} below).

The group $\Aff(\C)_{\oline\tau_h}$
  acts naturally  on $M := \C$, where $\oline\tau_h.z = - \oline z$.
  The two domains $\C_{\pm}$ (upper and lower half plane)
  are invariant under the real group $G_{\tau_h}$, and
  \[ (\C_\pm)^{\oline\tau_h} =  \pm \ie \R_+ \]
  satisfy $\exp(\cS_{\pm \pi/2} h).(\pm \ie \R_+) = \C_\pm$.
  For $m_0 = \pm r \ie$, $r > 0$, we thus obtain with Lemma~\ref{lem:Mxi}
  a crown domain 
  \begin{align*}
 \Xi_{\pm, r} 
    &  := \{ (b,a) \in \Aff(\C) \: b \pm r a\ie \in  \C_{\pm}\}
= \{ (b,a) \in \Aff(\C) \: \pm r^{-1} b + a\ie \in \C_+\}\\
& = \{ (b,a) \in \Aff(\C) \: \pm r^{-1} \Im b + \Re a > 0 \}.
  \end{align*}
Conjugation with $G$ changes the parameter $r$, so that
  it suffices to consider the domains $\Xi_{\pm, 1}$.

To connect with crowns of symmetric spaces,
we consider $\C_+$ as a real $2$-dimensional
  homogeneous space of $G$ via the orbit map
  $(b,a) \mapsto (b,a).\ie = b + a \ie$. 
It has a ``complexification''
  \[ \eta_{\C_+} \:  \C_+ \to \C_+ \times \C_- \subeq \C^2,
    \quad \eta_{\C_+}(z) = (z,\oline z).\]
 The complex Lie group $\Aff(\C)$  acts naturally on
  $\C \times \C$ by the diagonal action with respect to the canonical
  action on $\C$ by affine maps.
  We consider the complex manifold $\Xi_{\C \times \C} := \C_+ \times \C_-$ 
  as a 
  crown domain of the upper half plane
  $\C_+ \cong \eta_{\C_+}(\C_+)$, which is a Riemannian
  symmetric space of $\SL_2(\R)$, 
  acting by M\"obius transformation; 
  see \cite{Kr09}  and \cite[\S 2.1]{Kr08}).
  It carries the antiholomorphic involution
  $\tau(z,w) := (-\oline z, -\oline w)$
  and it is invariant under the real affine group
  $G = \R \rtimes \R_+$, so that we obtain with $\tau$ an action
  by $G_{\tau_h}$ on $\Xi_{\C \times \C}$. 
  As $\C_+ = G.\ie$, the
  corresponding crown domain in $\Aff(\C)$ in the sense of
  Lemma~\ref{lem:Mxi} with $m_0 = \ie$ is 
  \begin{align*}
    \Xi_2 
& := \{ g \in \Aff(\C) \:  g.\eta_{\C_+}(\ie) \in \C_+ \times \C_-\}
    = \{ (b,a) \in \Aff(\C) \: b \pm a\ie \in \C_\pm \} \\
&= \Xi_{+,1} \cap \Xi_{-,1}
  =  \{ (b,a) \in \Aff(\C) \: |\Im b| < \Re a \}.
  \end{align*}
  For this domain
\[     \Xi_2^{\oline\tau_h}  
  = \{ (\ie c,a) \in \ie \R \times \R_+ \: |c| < a \}, \]
and, for $|t| < \pi/2$ and $(\ie c,a) \in \Xi_2^{\oline\tau_h}$, we have
$e^{\ie th}(\ie c,a) = (e^{\ie t}\ie c, e^{\ie t} a)$ with
\[ |\Im(e^{\ie t}\ie c)| = \cos(t) |c|
  < \cos(t) a = \Re(e^{\ie t}a).\]
This proves (Cr2) for $\Xi_2$ and $W^c_2 :=  \Xi_2^{\oline\tau_h}$.
As we shall see in Subsection~\ref{subsec:versus} below, 
for purposes of analytic extension of orbit maps,
the smaller domain $\Xi_2$ behaves much better than~$\Xi_1$,
and the larger domains $\Xi_{\pm, 1}$ also work
for representations satisfying the spectral condition
$\mp \ie \partial U(x) \geq 0$. 
\end{ex}

\begin{ex} {\rm(Complex Olshanski semigroups)}
  \mlabel{ex:olshanski}
  Let $G$ be a connected Lie group for which
  $\eta_G$ is injective and $G_\C$ is simply connected,
  so that we may assume that $G \subeq G_\C$, 
  and let $h \in \g$ be an Euler element.  We assume that 
  $C \subeq \g$ is a pointed closed convex $\Ad(G)$-invariant cone,
  satisfying  $-\tau_h(C) = C$, and
  that the ideal $\fm := C - C \trile \g$ satisfies
  $\g = \fm + \R h$. 
  If $\fm = \g$, we replace $\g$ by $\fm \rtimes_{\ad h} \R$, so that we
  may always assume that $\g = \fm \rtimes \R h$. 
  
  Let $M \trile G$ 
  and $M_\C \trile G_\C$ be the normal integral subgroups corresponding to $\fm$ and $\fm_\C$, respectively. 
  Since $G_\C$ is simply connected, 
  it follows from  \cite[Thm. 11.1.21]{HiNe12} that the subgroup
  $M_\C\subseteq G_\C$ is closed and simply connected,
  and this implies that $M \subeq M_\C$ is closed in $G$,
    as the identity component of the group of fixed points of the
    complex conjugation on $M_\C$ with respect to $M$. 
Then the construction in Step 1 of the proof of \cite[Thm. 15.1.4]{HiNe12} 
shows that the inclusion map $M\into M_\C$ is the universal complexification of $M$. 
Using again that 
$G_\C$ is simply connected, it follows from  
\cite[Prop.~11.1.19]{HiNe12} that   $G_\C \cong M_\C \rtimes_\alpha \C$ with
  $\alpha_z(g) = \exp(zh) g \exp(-zh)$, and thus
  \[ G \cong  M \rtimes_\alpha \R \quad \mbox{ with }  \quad
    \alpha_t(g) = \exp(th) g \exp(-th).\]
We consider the   complex Olshanski semigroup 
  \[ S := M \exp(\ie C^\circ) \subeq M_\C,\]
for which the multiplication map
  $M \times C^\circ  \mapsto S, (g,x) \mapsto g \exp(\ie x)$ is a
  diffeomorphism
(\cite[\S XI.1]{Ne00}). 
Recall from \cite[Lemma 3.2]{NOO21} (and \cite{Ne25}) that
  \[ C \subeq C_+ + \g_0(h)  - C_-
    \quad \mbox{ and } \quad C_+^\circ - C_-^\circ = C^\circ \cap \g^{-\tau_h^\g} \quad \mbox{ for }  \quad C_\pm := \pm C \cap \g_{\pm 1}(h)\]
follow from the invariance of $C$ under $e^{\R \ad h}$ and
$-\tau_h$.
  On the complex manifold $S$, the group $G_{\tau_h}$ acts  by
  \[ (g,t).m = g\alpha_t(m) \quad \mbox{ and } \quad
    \tau_h.m = \oline\tau_h(m)\]
where we use the abuse of notation $\tau_h=(e,-1)\in G_{\tau_h}$ introduced after \eqref{eq:gtauh}. 
 The fixed points of $\oline\tau_h$ form the semigroup 
$S^{\oline\tau_h} = M^{\tau_h} \exp(\ie(C_+^\circ -C_-^\circ)).$ 
  For $x_{\pm 1} \in C_\pm^\circ$, we consider the element 
$s_0 := \exp(\ie(x_1 - x_{-1})) \in S^{\oline\tau_h}$ 
  and in $G_\C$ the crown domain
  \[ \Xi = \{ (g,z) \in M_\C \times \cS_{{\pm \pi/2}} \:
    (g,z).s_0 = g \alpha_z(s_0) \in S \}.\] 
  We now verify (Cr1-2); (Cr3) is redundant because $\Xi \subeq G_\C$.

  \nin (Cr1): First we observe that $\oline\tau_h(s_0) = s_0$,
  and that   $\oline\tau_h(S) = S$
  follows from $-\tau_h^\g(C) = C$. Therefore 
  \[ \oline\tau_h((g,z).s_0) = \oline\tau_h(g,z).s_0\]
implies that the domain $\Xi$ is invariant under
  $\oline\tau_h(g,z) = (\oline\tau_h(g), \oline z)$.

  \nin (Cr2): We recall from \cite[Thms.~2.16, 2.21]{Ne22}
  that, for $y = y_1 - y_{-1} \in C_+^\circ - C_-^\circ$, 
  we have
  \[ \alpha_{\ie t}(\exp(\ie y)) = \exp(\ie(e^{\ie t} y_1 - e^{-\ie t} y_{-1})) \in S
\quad \mbox{ for }  \quad |t| < \pi/2. \]
So we find with 
  \[ W_M^c := M^h_e\exp(\ie(C_+^\circ + C_-^\circ)) s_0^{-1}
    \subeq M_\C^{\oline\tau_h} \]
  that
  \[ W^c := G^h_e (W_M^c \times \{0\}) \subeq \Xi^{\oline\tau_h}
    \quad \mbox{ satisfies }\quad
    \alpha_{\ie t}W^c \subeq \Xi \quad \mbox{ for } \quad |t| < \pi/2.\]
  This proves (Cr2).
\end{ex}

\begin{rem} The construction in Example~\ref{ex:affine-group}
  can also be viewed as a special  case of Example~\ref{ex:olshanski}.
  To see this, write
\[ G = \Aff(\R)_e = \R \rtimes \R_+ \cong M \rtimes \R_+
  \subeq M_\C \rtimes \C^\times
  \quad \mbox{ with } \quad M = \R\times \{1\}.\] 
The invariant  cone $C := \R_+ x$ generates the ideal $\fm = \R x$,
and $C = C_+$.   Then $S = \C_+$ with $S^{\oline\tau_h} = \ie \R_+$.
  For $s_0 = \ie r$, $r > 0$, we obtain
  \[ \Xi = \{ (b,a) \in M \times \C^\times  \: (b,a).s_0
    = b + r a \ie  \in \C_+ \} = \Xi_{+,r}.\] 
\end{rem}

\section{From crown domains to nets of real subspaces}
\mlabel{sec:2}

In this section, we construct nets of
real subspaces satisfying (Iso), (Cov), (RS) and (BW) for all
antiunitary representations $(U,\cH)$ of $G_{\tau_h}$,
  for which the space
  $\cH^\omega(\Xi) \cap \cH^J_{\rm temp}$ is $G$-cyclic (Theorem~\ref{thm:4.9}).

\subsection{$\Xi$-analytic vectors} 

Let $(U,\cH)$ be a unitary representation of $G$
  and $U_h(t) = U(\exp th)$.
  We write $\cH^\omega(\Xi) \subeq \cH^\omega$ 
for the subspace of those analytic vectors $\xi$ 
whose orbit map
\[ U^\xi \: G \to \cH, \quad U^\xi(g) := U(g)\xi \]  
extends from the real
submanifold $G \subeq \Xi$ to a holomorphic map $\Xi \to \cH$.
We then call $\xi$ a {\it $\Xi$-analytic vector.}
For the unitary one-parameter group $U_h(t)= U(\exp th)$,
we consider the spaces
  \[ \cH^\omega_{U_h} \subeq \cH^\infty_{U_h} \subeq \cH
    \subeq \cH^{-\infty}_{U_h} \subeq \cH^{-\omega}_{U_h} \]
  and note that
  \[ \cH^\omega_U \subeq \cH^\omega_{U_h}, \quad
    \cH^\infty_U \subeq \cH^\infty_{U_h}, \quad 
    \cH^{-\infty}_{U_h} \subeq \cH^{-\infty}_U, \quad \mbox{ and } \quad
    \cH^{-\omega}_{U_h}  \subeq  \cH^{-\omega}_U \]
(see Appendix~\ref{app:c} for more details on analytic vectors
and hyperfunction vectors). 

\begin{defn} \mlabel{def:kms}  (KMS condition) Assume 
that $(U,\cH)$ extends to an antiunitary representation of
$G_{\tau_h}$ with $U(\tau_h) = J$.
Then we write 
 $\cH^{-\omega}_{U_h, {\rm KMS}}$  for the real linear
 subspace of $\cH^{-\omega}_{U_h}$, consisting of the hyperfunction 
 vectors $\eta \in \cH^{-\omega}_{U_h}$ 
 for which the orbit map $U_h^\eta(t) = U_h^{-\omega}(t)\eta$ extends to a
\break {weak-$*$} continuous map $U_h^\eta \: \oline{\cS_\pi} \to \cH^{-\omega}_{U_h}$,  
weak-$*$ holomorphic on the interior, 
such that $U_h^\eta(\pi \ie) = J \eta$
(cf.~\cite[\S 2.2, Lemma~2]{FNO25a}, \cite{BN24}). 
We likewise define $\cH^{-\omega}_{\rm KMS}$, $\cH^{-\infty}_{\rm KMS}$, 
and $\cH^{-\infty}_{U_h, {\rm KMS}}$,
where we always consider the corresponding weak-$*$ topology. 
\end{defn}

\begin{defn} \mlabel{def:kmsn}  (Tempered vectors)
  We write $\cH^J_{\rm temp}$ for the real linear subspace
of those $v \in \cH^J$ for which the orbit map $U_h^v(t) := U(\exp th)v$
  extends analytically to $\cS_{\pm \pi/2}$ and
  \[ \beta^{\pm}(v)
    := \lim_{t \to \mp\pi/2} \ee^{\ie t \partial U(h)} v 
    = \lim_{t \to \mp\pi/2} U^v_h(\ie t) \] 
  exist in $\cH^{-\infty}_{U_h}$.
As $Jv= v$, both limits exist if one does.  
In view of \cite[Thm.~6.1]{FNO25b}, this is equivalent to
  the existence of $C, N > 0$ such that
  \begin{equation}
    \label{eq:growthcond}
 \| U^v_h(\ie t)\|^2 \leq C \big(\frac{\pi}{2}-|t|\big)^{-N}
 \quad  \mbox{  for } \quad |t| < \pi/2.
  \end{equation}
By \cite[\S 2.2, Lemma~2]{FNO25a}, the limit $\beta^+(v)$ always
exists in the larger space $\cH^{-\omega}_{U_h, {\rm KMS}}$. 
\end{defn}

\begin{rem} If $h \in \g$ is an Euler element
    and 
   $\g_1(h)\cup\g_{-1}(h)\ne\{0\}$, 
    then \cite[Thm.~7.8]{BN24} implies that, whenever
    there exists an analytic vector of $G$ whose orbit map
    extends to the strip $\exp(\cS_{\pm r} h)$, then
    $r \leq \pi/2$. This motivates the choice of the strip width
    in condition (Cr2) and implicitly in Definition~\ref{def:kmsn}. 
\end{rem}

\begin{rem}
  \label{invar_rem}
  Suppose that $\eta_G \: G \to G_\C$ is injective,
    so that $\Xi \subeq G_\C$. 
    If $v\in\cH$ and $g_0\in G$, then, for every $g\in G$, we have
  \[ U^{U(g_0)v}(g)=U(g)U(g_0)v=U(gg_0)v.\] 
This implies that if $v\in\cH^\omega(\Xi)$ then $U(g_0)v\in\cH^\omega(\Xi g_0^{-1})$. 
Thus, if $\Xi$ is right-invariant with respect to a subgroup $L \subeq G$,
        then the linear subspace $\cH^\omega(\Xi)$ is $L$-invariant.
	
	One may expect that $\Xi$ is right-invariant under 
  a non-trivial complexified subgroup $L_\C \subeq G_\C$,
  as in the semisimple case for $L = K$ (see Subsection~\ref{subsec:semisim}).
Natural candidates for $L \subeq G$ are integral subgroups 
whose Lie algebra $\fl$ is compactly embedded, i.e., 
the subgroup of $\Aut(\g)$, generated by $e^{\ad \fl}$, has compact closure. 
\end{rem}

\begin{lem}   \label{invar_left_lem}
The following assertions hold: 
\begin{enumerate}[{\rm (i)}]
\item\label{invar_left_rem_item1}
	 For arbitrary $y\in\g$, the corresponding Lie derivative operator
	 $$L_y\colon C^\infty(\Xi,\cH)\to C^\infty(\Xi,\cH), \quad 
	 (L_yf)(g):=\frac{d}{dt}\Big|_{t = 0}\ f(\exp(ty)g)$$
	 satisfies $L_y(\cO(\Xi,\cH))\subseteq\cO(\Xi,\cH)$. 
\item\label{invar_left_rem_item2} 
  We have $\dd U(\g)\cH^\omega(\Xi)\subseteq\cH^\omega(\Xi)$ 
and, for $x \in \g_\C,\ p\in\Xi,\ v\in\cH^\omega(\Xi)$, 
\begin{equation}\label{invar_left_rem_eq1} 
U^{\dd U(x)v}(p)=\dd U(\Ad(\eta_\Xi(p))x) U^v(p)
\end{equation}
where $U^v\in\cO(\Xi,\cH)$ is the holomorphic extensioon of the orbit map $U^v\colon G\to\cH$. 
\item\label{invar_left_rem_item3}  
The closure of $\cH^\omega(\Xi)$ in $\cH$ is $U(G)$-invariant. 
If, in particular, $U$ is irreducible and $\cH^\omega(\Xi)$ is
non-zero,  then $\cH^\omega(\Xi)$ is dense in~$\cH$. 
\end{enumerate}
\end{lem}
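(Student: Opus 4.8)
The plan is to prove the three assertions in order, using (i) as the engine for (ii) and (ii) as the engine for (iii). For (i), I would argue that the infinitesimal action of $\g$ on $\Xi$ is by holomorphic vector fields. Indeed, by (Cr1) the group $G$ acts on $\Xi$ by holomorphic maps, so for $y \in \g$ the fundamental vector field $X_y(p) := \frac{d}{dt}\big|_{t=0}\exp(ty).p$ is holomorphic: in local holomorphic coordinates each map $p \mapsto \exp(ty).p$ has holomorphic components, and differentiation in $t$ commutes with $\overline\partial$. Since $(L_y f)(p) = X_y f(p)$ is the derivative of $f$ along $X_y$, and the directional derivative of a holomorphic $\cH$-valued map along a holomorphic vector field is again holomorphic (in coordinates $\overline\partial(a_j\,\partial_{z_j}f) = a_j\,\partial_{z_j}\overline\partial f = 0$), we obtain $L_y f \in \cO(\Xi,\cH)$.

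For (ii), the first step is the covariance identity
\[ U(g)U^v(p) = U^v(g.p) \qquad (g \in G,\ p \in \Xi), \]
which I would prove by analytic continuation: both sides are holomorphic in $p$, and for $p = g' \in G$ both equal $U^v(gg')$; since $G$ is totally real in $\Xi$, the identity theorem forces equality throughout $\Xi$. Consequently the orbit map of $U^v(p)$ is $g \mapsto U^v(g.p)$, which is smooth (indeed real-analytic) because the action $G \times \Xi \to \Xi$ is smooth and $U^v$ is holomorphic; hence $U^v(p) \in \cH^\infty$. Differentiating the covariance identity at $g = e$ in the direction $y \in \g$ then gives
\[ \dd U(y)U^v(p) = (L_y U^v)(p), \]
whose right-hand side lies in $\cO(\Xi,\cH)$ by (i); extending $y \mapsto L_y$ complex-linearly to $\g_\C$, the map $p \mapsto \dd U(x)U^v(p)$ is holomorphic for every $x \in \g_\C$. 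To reach the stated formula I would fix a basis $x_1,\dots,x_d$ of $\g_\C$ and write $\Ad(\eta_\Xi(p))x = \sum_j c_j(p)x_j$ with $c_j$ holomorphic (using that $\eta_\Xi$ is holomorphic by (Cr3) and that $g_\C \mapsto \Ad(g_\C)x$ is holomorphic on $G_\C$). Then $p \mapsto \dd U(\Ad(\eta_\Xi(p))x)U^v(p) = \sum_j c_j(p)\,\dd U(x_j)U^v(p)$ is holomorphic, and by the operator identity $U(g)\dd U(x) = \dd U(\Ad(g)x)U(g)$ it coincides on $G$ with the orbit map $g \mapsto U(g)\dd U(x)v$ of $\dd U(x)v$. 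Uniqueness of holomorphic extension off the totally real submanifold $G$ then yields both $\dd U(x)v \in \cH^\omega(\Xi)$ and \eqref{invar_left_rem_eq1}.

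For (iii), let $W := \overline{\cH^\omega(\Xi)}$ and take $\xi \in W^\perp$ and $v \in \cH^\omega(\Xi)$. Since $v$ is an analytic vector, $f(g) := \langle U(g)v, \xi\rangle$ is real-analytic on $G$, and for every $D \in \cU(\g)$ the corresponding iterated left-invariant derivative of $f$ at $e$ equals $\langle \dd U(D)v, \xi\rangle$, which vanishes because $\dd U(D)v \in \cH^\omega(\Xi) \subseteq W$ by (ii). Hence the full jet of $f$ at $e$ is zero, so $f$ vanishes on a neighborhood of $e$, and the identity theorem for real-analytic functions on the connected group $G$ gives $f \equiv 0$. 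Therefore $U(g)v \perp \xi$ for all $g \in G$, so $U(G)\cH^\omega(\Xi) \subseteq W$ and, by continuity of each $U(g)$, $U(G)W \subseteq W$. If $U$ is irreducible and $\cH^\omega(\Xi) \neq \{0\}$, then $W$ is a nonzero closed $U(G)$-invariant subspace, whence $W = \cH$, i.e. $\cH^\omega(\Xi)$ is dense.

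The main obstacle I anticipate lies in (ii): making rigorous the pointwise application of the unbounded operator $\dd U$ to the holomorphic $\cH$-valued map $U^v$ and verifying that the outcome is genuinely holomorphic. The resolution is precisely the two-step reduction above --- first identify $\dd U(y)U^v$ with the Lie derivative $L_y U^v$ furnished by (i), which is holomorphic by construction, and only afterwards introduce the holomorphically varying coefficient $\Ad(\eta_\Xi(p))x$ through a fixed basis, so that no unbounded operator is ever differentiated. The remaining ingredients (the covariance identity by analytic continuation, smoothness of $U^v(p)$, and the real-analytic identity theorem in (iii)) are then routine.
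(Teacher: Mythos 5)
Your proposal is correct and for parts (i) and (ii) follows essentially the same route as the paper: the fundamental vector fields of the $G$-action are holomorphic, the identity $\partial U(y)U^v(w)=(L_yU^v)(w)$ reduces the unbounded-operator issue to the Lie derivative, and the holomorphic coefficients of $\Ad(\eta_\Xi(p))x$ in a fixed basis together with analytic continuation off the totally real submanifold $G$ give \eqref{invar_left_rem_eq1}. The only divergence is in (iii), where the paper simply cites Harish-Chandra (resp.\ Warner, Prop.~4.4.5.6) for the fact that the closure of a $\dd U(\g)$-invariant space of analytic vectors is $U(G)$-invariant, whereas you reprove that fact directly via the vanishing of the full jet of $g\mapsto\la U(g)v,\xi\ra$ at $e$ and the real-analytic identity theorem; this is exactly the standard proof of the cited result, so the content is the same, with your version being self-contained at the cost of a few extra lines.
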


\begin{prf}
  \ref{invar_left_rem_item1}
  	The operator $L_y$ is the Lie derivative with respect to a 
    fundamental vector field defined by the action of $G$ by left-translations 
on $\Xi$. 
    As this vector field is holomorphic, it preserves on each open subset
    the subspace of holomorphic functions.

    \ref{invar_left_rem_item2} 
Let $v\in\cH^\omega(\Xi)$ and $x\in\g$. 
	For arbitrary $g\in G$ we have 
	$U(g)\partial U(x)v=\partial U(\Ad(g)x)U(g)v$, so that
        \eqref{invar_left_rem_eq1}  follows 
        for $p=g\in G$. 
        The general case $p\in\Xi$ is then obtained by analytic extension.
	Since the mapping $G_\C\to \g_\C$, $g\mapsto \Ad(g)x$, is holomorphic, 
for any basis $y_1,\dots,y_m$ of $\g$, 
there exist $\chi_1,\dots,\chi_m\in\cO(G_\C)$ with
\[ \Ad(g)x=\chi_1(g)y_1+\cdots+\chi_m(g)y_m
\quad \mbox{ for all } \quad g\in G_\C.\]
By plugging this in \eqref{invar_left_rem_eq1}, it
suffices to prove that, for every $y\in\g$,
the function $w\mapsto  \partial U(y)U^v(w)$       
is holomorphic on $\Xi$. 
 
We now check this last fact. 
From the $G$-action on $\Xi$ it follows that, for every $w\in\Xi$,  
there exists $t_w\in\R_+$, such that for all $t\in (-t_w,t_w)$ we have $\exp(ty)w\in \Xi$, and then $U^v(\exp(ty)w)=U(\exp(ty))U^v(w)$. 
Taking the derivative at $t=0$ in this equality, we obtain 
\[ L_y(U^v)(w):=\frac{d}{dt}\Big|_{t = 0} U^v(\exp(ty)w)=\partial U(y)U^v(w)\]  
for arbitrary $w\in\Xi$, where $L_y\colon C^\infty(\Xi,\cH)\to C^\infty(\Xi,\cH)$ is the Lie derivative operator in the direction $y\in\g$. 
	Since  $L_y(\cO(\Xi,\cH))\subseteq \cO(\Xi,\cH)$ 
    by \ref{invar_left_rem_item1},  we are done.
    
    \ref{invar_left_rem_item3} In view of \ref{invar_left_rem_item2}, 
    the closure of $\cH^\omega(\Xi)$ is $U(G)$-invariant by 
	\cite[Cor. to Th. 2, pp.~210--211]{HC53} or
        \cite[Prop.~4.4.5.6]{Wa72}.
	Therefore, if the representation $U$ is irreducible
        and $\cH^\omega(\Xi)\ne\{0\}$, then
        the linear subspace $\cH^\omega(\Xi)$ is dense in~$\cH$. 
\end{prf}

\subsection{Analytic vectors and boundary values} 

As before, $(U,\cH)$ is  an antiunitary representation
of $G_{\tau_h}$ and $J = U(\tau_h)$.
We will establish in Lemma~\ref{lem:beta+-equiv}  a natural 
	$\dd U(\g)$-equivariance property of the mapping 
	$\beta^+\colon\cH^\omega_{U_h}(\cS_{\pm\pi/2})\to\cH^{-\omega}_{U_h}$. 
	As the domain of definition and the range of $\beta^+$ may  not be $\dd U(\g_\C)$-invariant, 
	such an equivariance property does not make sense on $\cH^\omega_{U_h}(\cS_{\pm\pi/2})$. 
	However, the axiom (Cr2) ensures that  $\cH^\omega(\Xi)\subeq\cH^\omega_{U_h}(\cS_{\pm\pi/2})$, 
	and on the other hand $\cH^\omega(\Xi)$ does carry the action of $\dd U(\g_\C)$ by Lemma~\ref{invar_left_lem}\ref{invar_left_rem_item2}, 
	so we study the $\dd U(\g_\C)$-equivariance property for the restriction of $\beta^+$ to  $\cH^\omega(\Xi)$.

\begin{lem} \mlabel{lem:beta+-equiv}
  The map
  \[ \beta^+ \: \cH^\omega(\Xi) \to \cH^{-\omega}, \quad 
    \beta^+(v) := \lim_{t \to \pi/2} U^v(\exp(-\ie th)) 
= \lim_{t \to \pi/2} e^{-\ie t \partial U(h)}v  \]
  satisfies the following equivariance relation with respect to the
  action of $\g_\C$ on both sides:
  \begin{equation}
    \label{eq:beta+rel1}
 \beta^+ \circ \dd U(x) = \dd U^{-\omega}(\zeta(x)) \circ \beta^+
    \quad \mbox{ for } \quad
    \zeta := e^{-\frac{\pi \ie}{2} \ad h} \in \Aut(\g_\C), x \in \g_\C.
  \end{equation}
\end{lem}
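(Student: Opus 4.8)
The plan is to push the infinitesimal equivariance of Lemma~\ref{invar_left_lem}\ref{invar_left_rem_item2}, which is valid at interior points $p\in\Xi$, out to the boundary value $\beta^+$; the only genuine work is to justify interchanging $\dd U$ with the boundary-limit $t\to\pi/2$. For $|t|<\pi/2$ put $p_t:=\exp(-\ie th)\in\Xi$, the value at $z=-\ie t$ of the holomorphic orbit $z\mapsto\exp(zh)$ furnished by (Cr2). Since $\eta_\Xi$ is holomorphic and restricts to $\eta_G$ on $G$, the two holomorphic maps $z\mapsto\eta_\Xi(\exp(zh))$ and $z\mapsto\exp(zh)\in G_\C$ agree on $\R$ and hence on $\cS_{\pm\pi/2}$, so $\Ad(\eta_\Xi(p_t))=e^{-\ie t\ad h}$. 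Lemma~\ref{invar_left_lem}\ref{invar_left_rem_item2} applied with this $p_t$ then gives the exact identity
\begin{equation*}
  U^{\dd U(x)v}(p_t)=\dd U\big(e^{-\ie t\ad h}x\big)\,U^v(p_t)\in\cH
  \qquad(|t|<\tfrac{\pi}{2}),
\end{equation*}
with $\dd U(x)v\in\cH^\omega(\Xi)$ by the same lemma. As $e^{-\ie t\ad h}x\to e^{-\frac{\pi\ie}{2}\ad h}x=\zeta(x)$ and $U^v(p_t)\to\beta^+(v)$ when $t\to\pi/2$, the assertion \eqref{eq:beta+rel1} is precisely the statement that the right-hand side converges to $\dd U^{-\omega}(\zeta(x))\beta^+(v)$.

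First I would check that $U^v(p_t)$ is itself an analytic vector for $G$: the extended orbit map is $G$-equivariant, so the orbit map of $U^v(p_t)$ is $g\mapsto U^v(g.p_t)$, the composite of the holomorphic $U^v$ with the real-analytic action $g\mapsto g.p_t$ into $\Xi$, hence real-analytic; in particular $U^v(p_t)\in\cH^\infty$. Testing the displayed identity against an arbitrary $\phi\in\cH^\omega$ and moving the operator across via the adjoint relation $\langle\dd U(y)\psi,\phi\rangle=-\langle\psi,\dd U(\bar y)\phi\rangle$ on $\cH^\infty$, where $y\mapsto\bar y$ is the conjugation of $\g_\C$ with respect to $\g$, yields
\begin{equation*}
  \langle U^{\dd U(x)v}(p_t),\phi\rangle=-\langle U^v(p_t),\,\phi_t\rangle,
  \qquad \phi_t:=\dd U\big(e^{\ie t\ad h}\bar x\big)\phi,
\end{equation*}
since $\overline{e^{-\ie t\ad h}x}=e^{\ie t\ad h}\bar x$. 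Because $\g_\C$ is finite-dimensional and $\dd U(\g_\C)$ preserves $\cH^\omega$, the map $y\mapsto\dd U(y)\phi$ is linear, hence continuous, from $\g_\C$ into $\cH^\omega$, so $\phi_t\to\dd U(\overline{\zeta(x)})\phi=:\phi_\infty$ in $\cH^\omega$.

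The decisive point, and the main obstacle, is the joint limit $\langle U^v(p_t),\phi_t\rangle\to\langle\beta^+(v),\phi_\infty\rangle$: here $\phi_t\to\phi_\infty$ in $\cH^\omega$ but $U^v(p_t)$ converges only weak-$*$ in $\cH^{-\omega}$ (its limit $\beta^+(v)$ is a genuine hyperfunction vector and $U^v(p_t)$ has no limit in $\cH$), so the pairing is of the indeterminate type $0\cdot\infty$. I would control it by splitting
\begin{equation*}
  \langle U^v(p_t),\phi_t\rangle-\langle\beta^+(v),\phi_\infty\rangle
  =\langle U^v(p_t),\phi_t-\phi_\infty\rangle+\langle U^v(p_t)-\beta^+(v),\phi_\infty\rangle .
\end{equation*}
The second term tends to $0$ by the defining weak-$*$ convergence $U^v(p_t)\to\beta^+(v)$, whose existence in $\cH^{-\omega}$ is guaranteed by \cite[\S 2.2, Lemma~2]{FNO25a} together with $\cH^{-\omega}_{U_h}\subeq\cH^{-\omega}$. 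For the first term I would invoke Banach--Steinhaus: a weak-$*$ convergent family is pointwise bounded on $\cH^\omega$, which is barrelled (an inductive limit of Banach spaces, cf.\ Appendix~\ref{app:c}), hence $\{U^v(p_t)\}_{t\to\pi/2}$ is equicontinuous, and pairing an equicontinuous family with $\phi_t-\phi_\infty\to0$ in $\cH^\omega$ gives $0$. Using finally the weak-$*$ convergence of the left-hand side $U^{\dd U(x)v}(p_t)\to\beta^+(\dd U(x)v)$ together with the definition $\langle\dd U^{-\omega}(\zeta(x))\beta^+(v),\phi\rangle=-\langle\beta^+(v),\dd U(\overline{\zeta(x)})\phi\rangle$, we obtain $\langle\beta^+(\dd U(x)v),\phi\rangle=\langle\dd U^{-\omega}(\zeta(x))\beta^+(v),\phi\rangle$ for every $\phi\in\cH^\omega$; as $\cH^\omega$ separates $\cH^{-\omega}$, this proves \eqref{eq:beta+rel1}.
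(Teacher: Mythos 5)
Your proof is correct and rests on the same pivot as the paper's: the identity $U^{\dd U(x)v}(p)=\dd U(\Ad(\eta_\Xi(p))x)\,U^v(p)$ from Lemma~\ref{invar_left_lem}\ref{invar_left_rem_item2}, specialized to $p_t=\exp(-\ie th)$. The two arguments diverge only in how the limit interchange at $t\to\pi/2$ is justified. The paper keeps the operator fixed and moves the $t$-dependence into the vector slot: starting from the right-hand side of \eqref{eq:beta+rel1}, it uses the weak-$*$ continuity of $\dd U^{-\omega}(\zeta(x))$ to write $\dd U^{-\omega}(\zeta(x))\beta^+(v)=\lim_{t\to\pi/2}\dd U(\zeta(x))U^v(p_t)$ and then rewrites $\dd U(\zeta(x))U^v(p_t)= e^{-\ie t\partial U(h)}\dd U(e^{\ie t\ad h}\zeta(x))v$, where $t\mapsto \dd U(e^{\ie t\ad h}\zeta(x))v$ is a continuous curve in the finite-dimensional space $\dd U(\g_\C)v$; the joint limit then reduces to finitely many products of convergent scalars with weak-$*$ convergent vectors, and no equicontinuity argument is needed. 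You instead keep the $t$-dependence on the operator, dualize it onto the test vector $\phi_t=\dd U(e^{\ie t\ad h}\bar x)\phi$ (which converges inside the finite-dimensional space $\dd U(\g_\C)\phi$), and control the joint limit by Banach--Steinhaus on the barrelled space $\cH^\omega$. Both routes are valid and of comparable length; yours pays with the (correct) equicontinuity argument, the paper's pays with the weak-$*$ continuity of $\dd U^{-\omega}(\zeta(x))$, which is in any case needed to make sense of the right-hand side of \eqref{eq:beta+rel1}. Two cosmetic points: the paper's pairing $\la\cdot,\cdot\ra$ places the hyperfunction vector in the second (linear) slot rather than the first, and $\cH^\omega$ is described in Appendix~\ref{app:c} as a countable inductive limit of Fr\'echet (not Banach) spaces --- still barrelled, so your appeal to Banach--Steinhaus stands.
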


\begin{prf}
  (cf.\ \cite[\S 3, Prop.~7(d)]{FNO25a} for the semisimple case)
  Let $v \in \cH^\omega(\Xi)$ and $x \in \g_\C$. Then
  the continuity of the operators $\dd U^{-\omega}(z)$, $z \in \g_\C$,
    implies 
\begin{equation*}
   \dd U^{-\omega}(\zeta(x))\beta^+(v) 
  =  \lim_{t \to \pi/2}  \dd U^{-\omega}(\zeta(x)) U^v(\exp(-\ie t h)).
\end{equation*}
From \eqref{invar_left_rem_eq1} in Lemma~\ref{invar_left_lem}, we then obtain
\begin{equation}
  \label{eq:du-uv}
\dd U(x) U^v(p) = U^{\dd U(\Ad(\eta_\Xi(p))^{-1} x)v}(p)
 \quad \mbox{ for } \quad p \in \Xi.
\end{equation}
This formula holds obviously for $p \in G$, and for
general $p$ it follows by analytic continuation, using that
\[ p \mapsto \dd U(\Ad(\eta_\Xi(p))^{-1} x)v \in \dd U(\g_\C) v \]
is a holomorphic function with values in a finite-dimensional space.
The relation~\eqref{eq:du-uv} implies 
\[ \dd U(\zeta(x)) U^v(\exp(-\ie t h))
  = U^{\dd U(e^{\ie t \ad h} \zeta(x))v}(\exp(-\ie th)) 
= e^{-\ie t \partial U(h)} \dd U\big(e^{\ie t \ad h}\zeta(x)\big)v.\]
We now observe that
$t \mapsto   \dd U\big(e^{\ie t \ad h}\zeta(x)\big)v$ is a continuous curve
in the finite-dimensional subspace $\dd U(\g_\C)v$, so that
we obtain the limit 
\[\lim_{t \to \pi/2}  \dd U(\zeta(x)) U^v(\exp(-\ie t h))
  =\beta^+(\dd U\big(\zeta^{-1}\zeta(x)\big)v)
  =\beta^+(\dd U(x)v).\qedhere\]
\end{prf}

One obtains a natural generalization of the preceding lemma
  by considering the map $\beta^+$ on the subspace
  $\cH_1 := (\cH^\infty)_{U_h}^\omega(\cS_{\pm \pi/2})$ of those smooth vectors
  $\xi \in \cH^\infty$ whose $U_h$-orbit map $U^\xi_h$ extends to a
  holomorphic map $\cS_{\pm \pi/2} \to \cH^\infty$. This space is much
  larger than $\cH^\omega(\Xi)$ and it does not depend on $\Xi$.
  However, the proof   of the lemma shows that this space is $\g$-invariant and
  that the boundary value map
  $\beta^+ \: \cH_1 \to \cH^{-\omega}$ also satisfies 
the covariance relation \eqref{eq:beta+rel1}.

\begin{lem} \mlabel{lem:2.11}
    The following assertions hold:
    \begin{itemize}
    \item[\rm(a)]     The subspace $\cH^\omega(\Xi)$ is $J$-invariant
      and spanned over $\C$ by its $J$-fixed elements. 
\item[\rm(b)] If $v \in \cH^\omega(\Xi)$ is such that
  $\beta^+(v)$ exists in $\cH^{-\infty}_{U_h}$, then also
  $\beta^-(v)$ exists. 
  If $v\in\cH^J$, then the existence of $\beta^+(v)$ implies that
  $v\in\cH^J_{\rm temp}$.
    \end{itemize}
\end{lem}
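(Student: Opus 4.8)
The plan is to handle the two parts separately, and to reduce the general vector in (b) to the $J$-fixed case using (a).

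For (a), I would first establish $J$-invariance. Since $J = U(\tau_h)$ and $U$ is a homomorphism on $G_{\tau_h}$, the relation $U(g)J = JU(\tau_h(g))$ holds for all $g\in G$, so the orbit map of $Jv$ is $U^{Jv}(g) = U(g)Jv = JU^v(\tau_h(g))$. The key step is to verify that $p\mapsto JU^v(\oline\tau_h(p))$ is a holomorphic map $\Xi\to\cH$: indeed $U^v$ is holomorphic, $\oline\tau_h$ is antiholomorphic by (Cr1), so $U^v\circ\oline\tau_h$ is antiholomorphic, and postcomposing with the antilinear $J$ turns it back into a holomorphic map. As this map agrees with $U^{Jv}$ on the totally real submanifold $G$ (where $\oline\tau_h$ restricts to $\tau_h$), it is the holomorphic extension of $U^{Jv}$, proving $Jv\in\cH^\omega(\Xi)$. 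The spanning statement is then the standard decomposition $v = \tfrac12(v+Jv) + i\cdot\tfrac{1}{2i}(v-Jv)$ into two $J$-fixed pieces, both of which lie in $\cH^\omega(\Xi)$ by the $J$-invariance just shown.

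For the $J$-fixed case of (b), the decisive point is that $\tau_h$ fixes $h$, so $J$ commutes with the one-parameter group $U_h$; hence $J$ preserves $\cH^\infty_{U_h}$ and induces an antilinear, weak-$*$ continuous involution of $\cH^{-\infty}_{U_h}$. From $J\partial U(h) = \partial U(h)J$ one gets $Je^{z\partial U(h)} = e^{\bar z\partial U(h)}J$, and therefore, for $v\in\cH^J$, $\beta^-(v) = J\beta^+(v)$. Consequently, if $\beta^+(v)\in\cH^{-\infty}_{U_h}$ then $\beta^-(v)\in\cH^{-\infty}_{U_h}$ as well. Since $v\in\cH^\omega(\Xi)$ already has $U^v_h$ extending to $\cS_{\pm\pi/2}$ by (Cr2), all the requirements of Definition~\ref{def:kmsn} are satisfied and $v\in\cH^J_{\rm temp}$.

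For the general case of (b), I would write $v = a + ib$ with $a,b\in\cH^J\cap\cH^\omega(\Xi)$ as in (a). By $\C$-linearity of $\beta^\pm$ one has $\beta^\pm(v) = \beta^\pm(a) + i\beta^\pm(b)$, so by the $J$-fixed case it would suffice to prove $\beta^+(a),\beta^+(b)\in\cH^{-\infty}_{U_h}$ separately. The hard part will be exactly this separation: the hypothesis only delivers the single combination $\beta^+(a) + i\beta^+(b)\in\cH^{-\infty}_{U_h}$. In spectral terms, writing $A = -i\,\partial U(h)$ with $JAJ^{-1} = -A$, existence of $\beta^+$ controls the $\lambda\to+\infty$ tail of the spectral measure while existence of $\beta^-$ controls the $\lambda\to-\infty$ tail; for $J$-fixed vectors these tails are mirror images, but the cross term between $a$ and $b$ may conspire so that $\beta^+(a) + i\beta^+(b)$ is tempered while $\beta^+(a) - i\beta^+(b) = \beta^+(Jv)$ (equivalently $\beta^-(v) = J\beta^+(Jv)$) is not. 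Closing this gap requires genuinely more than extension along the $h$-strip, namely the full $\Xi$-analyticity: through Lemma~\ref{lem:beta+-equiv} the directions $\g_{\pm1}(h)$ act on $\cH^\omega(\Xi)$ and intertwine $\beta^+$ with $\dd U^{-\omega}(\zeta(\cdot))$, and the extension of orbit maps at interior points of $W^c\subseteq\Xi^{\oline\tau_h}$ transfers boundary regularity from one edge of the strip to the other. I expect this tail-linking step to be the main obstacle, the $J$-fixed case being the clean situation where no separation is needed.
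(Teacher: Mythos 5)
Your part (a) and the $J$-fixed case of (b) are correct and coincide with the paper's own argument: the paper obtains $U^{Jv}=J\,U^v\circ\oline\tau_h$ on $\Xi$ by analytic continuation of $JU^v(g)=U^{Jv}(\tau_h(g))$, deduces $J$-invariance and the spanning statement, and for (b) reduces to $G=\R$, $\Xi=\cS_{\pm\pi/2}$, where $\oline\tau_h(z)=\bar z$ turns this identity into $\beta^-(Jv)=J\beta^+(v)$; for $Jv=v$ this reads $\beta^-(v)=J\beta^+(v)$, and together with the extension of $U_h^v$ to the strip it gives $v\in\cH^J_{\rm temp}$, exactly as you argue.

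Where your proposal stalls is the general case of (b), and the reason is that the decomposition $v=a+\ie b$ is the wrong move. As you correctly observe, the hypothesis only controls the combination $\beta^+(a)+\ie\beta^+(b)$, and for a non-$J$-fixed vector the two spectral tails of $-\ie\,\partial U(h)$ are genuinely independent, so no strip argument (nor Lemma~\ref{lem:beta+-equiv}) will separate them. The paper never decomposes $v$: its entire proof of (b) is the single identity $\beta^-(Jv)=J\beta^+(v)$, obtained by restricting $U^{Jv}=J\,U^v\circ\oline\tau_h$ (which you already proved in (a)) to the strip; there are no cross terms because one works with $v$ and $Jv$ rather than with real and imaginary parts. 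Note that this identity delivers the existence of $\beta^-$ at the vector $Jv$ (equivalently $\beta^-(v)=J\beta^+(Jv)$), not at $v$ itself, so the paper's one-line deduction of the first assertion is itself terse, and your tail-independence remark is precisely the reason why that assertion should be read through this $J$-twisted identity. For the case that is actually used later, namely $Jv=v$, the two statements coincide and nothing further is needed. So: drop the $a+\ie b$ decomposition and the anticipated appeal to the $\g_{\pm1}$-equivariance of $\beta^+$; all of (b) is meant to be the specialization of the relation from (a) to the one-parameter group generated by $h$.
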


\begin{prf} (a) Since $U$ is an antiunitary representation
    of $G_{\tau_h}$, we have $J U(g) J = U(\tau_h(g))$, so that we obtain
for $v \in \cH^\omega(\Xi)$ the relation
$J U^v(g) = U^{Jv}(\tau_h(g))$ for $g \in G$, 
hence, by analytic continuation, 
\begin{equation}
  \label{eq:j1}
  J U^v(g) = U^{Jv}(\oline\tau_h(g)) \quad \mbox{ for } \quad g \in \Xi.
\end{equation}
This implies that $Jv \in \cH^\omega(\Xi)$ with 
\begin{equation}
  \label{eq:3}
  U^{Jv}(g) = J U^v(\oline\tau_h(g)) \quad \mbox{ for } \quad g \in \Xi,
  \quad \mbox{ resp.} \quad
  U^{Jv} = J U^v \circ \oline\tau_h.
\end{equation}
Therefore $\cH^\omega(\Xi)$ is $J$-invariant, hence spanned over $\C$
by its $J$-fixed elements. 

\nin (b) Here it suffices to assume that $G = \R$ and $\Xi = \cS_{\pm \pi/2}$.
In this case $\oline\tau_h(z) = \oline z$, so that
\eqref{eq:3} shows that
$\beta^{-}(Jv) = J\beta^{+}(v)$, and this implies the first part of~(b).
If $Jv = v$, then this relation shows that
the existence of $\beta^+(v)$ in $\cH^{-\infty}_{U_h}$ 
implies the existence of $\beta^-(v)$,
hence that  $v\in\cH^J_{\rm temp}$.
\end{prf}

We have $\cH^J_{\rm temp}\subseteq \cH^\omega_{U_h}(\cS_{\pm \pi/2})$,
and the following proposition provides a useful
characterization of $\cH^J_{\rm temp}$ as a subspace,
together with important properties of its elements. 

\begin{prop} \mlabel{prop:temp1} 
The  following assertions hold:
  \begin{itemize}
  \item[\rm(a)] $v \in \cH^J_{\rm temp}$ if and only if
    the orbit map $U_h^v(t) = U_h(t) v$ extends to an $\cH$-valued analytic function on $\cS_{\pm\pi/2}$,  and
    there exist $C,N > 0$ such that 
  $\|\ee^{\pm \ie t \partial U(h)}v\| \leq C \big(\frac{\pi}{2}-|t|\big)^{-N}$
  for $|t| < \pi/2$.   
\item[\rm(b)] $\beta^+ \: \cH^J_{\rm temp} \to \cH^{-\infty}_{U_h, {\rm KMS}}$
  is a real linear bijection (cf.~{\rm Definition~\ref{def:kms}}).
\item[\rm(c)]If $v\in\cH^J$, then $v\in \cH^J_{\rm temp}$ if and only if
 $U_h^v$ extends to a {weak-$*$} continuous function
  $\oline{\cS_{\pm\pi/2}} \to \cH^{-\infty}_{U_h}$,
that is  weak-$*$ holomorphic on the interior. 
\item[\rm(d)] 
  For $v \in \cH^J_{\rm temp}$, and the space $\cH^{-\infty}$
    for the representation $U$ of $G$ instead of $\cH_{U_h}^{-\infty}$, 
the extended orbit map
$U_h^v \colon \oline{\cS_{\pm\pi/2}} \to \cH^{-\infty},
  z \mapsto \ee^{z \partial U(h)} v$
    is continuous and holomorphic on the interior, with respect to the
  weak-$*$ topology on $\cH^{-\infty}$.
  \end{itemize}
\end{prop}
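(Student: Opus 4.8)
The plan is to read the four assertions as facets of a single $\ie\pi/2$-shift relating the centered strip $\cS_{\pm\pi/2}$ attached to a $J$-fixed vector to the KMS strip $\cS_\pi$ attached to its distributional boundary value, using the growth/boundary-value dictionary of \cite[Thm.~6.1]{FNO25b} together with the $J$-symmetry recorded in Lemma~\ref{lem:2.11}. Throughout I use that $J$ commutes with $U_h(s)$ for real $s$ (since $\tau_h^\g h = h$), whence, $J$ being antilinear, $J\,\ee^{z\partial U(h)} = \ee^{\oline z\,\partial U(h)}\,J$ on $\cH^{-\infty}_{U_h}$. For part (a): by Definition~\ref{def:kmsn}, $v\in\cH^J_{\rm temp}$ presupposes $v\in\cH^J$ and an $\cH$-valued holomorphic extension of $U^v_h$ to $\cS_{\pm\pi/2}$, so what must be added is the equivalence of "$\beta^\pm(v)$ exist in $\cH^{-\infty}_{U_h}$" with the polynomial bound. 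This is exactly \cite[Thm.~6.1]{FNO25b}. To get both signs from one, observe that $\ee^{-\ie t\partial U(h)}v = J\,\ee^{\ie t\partial U(h)}v$ (using $Jv=v$), so by isometry of $J$ the two norms coincide, and correspondingly $\beta^-(v)=J\beta^+(v)$ by Lemma~\ref{lem:2.11}.

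I would prove (c) before (b). For the forward implication, the $\cH$-valued holomorphic extension on the open strip is a fortiori weak-$*$ holomorphic into $\cH^{-\infty}_{U_h}$, and the bound from (a) promotes it, via \cite[Thm.~6.1]{FNO25b}, to a weak-$*$ continuous map on $\oline{\cS_{\pm\pi/2}}$ whose boundary values are the $\beta^\mp(v)$. For the converse, a weak-$*$ continuous, interior-holomorphic extension to $\oline{\cS_{\pm\pi/2}}$ already produces distributional boundary values in $\cH^{-\infty}_{U_h}$, and the same theorem, being an equivalence, returns the $\cH$-valued interior extension and the temperedness bound, so $v\in\cH^J_{\rm temp}$. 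Part (d) is then immediate: the inclusion $\cH^{-\infty}_{U_h}\subeq\cH^{-\infty}$ is weak-$*$-to-weak-$*$ continuous, being dual to the continuous inclusion $\cH^\infty\subeq\cH^\infty_{U_h}$, so composing the map from (c) with it yields the asserted weak-$*$ continuous, interior-holomorphic orbit map $z\mapsto \ee^{z\partial U(h)}v$ into $\cH^{-\infty}$.

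Finally (b), the heart of the statement. Real linearity of $\beta^+$ is clear from its definition as a weak-$*$ limit of linear maps, so it suffices to exhibit a two-sided inverse $\iota\colon\cH^{-\infty}_{U_h,{\rm KMS}}\to\cH^J_{\rm temp}$, $\iota(\eta):=U^\eta_h(\ie\pi/2)$, the value of the KMS orbit-map extension at the center of the strip. That $\iota(\eta)$ is $J$-fixed follows from $J\,\ee^{z\partial U(h)}=\ee^{\oline z\,\partial U(h)}J$ and the KMS relation $\ee^{\ie\pi\partial U(h)}\eta = J\eta$, which give $J\iota(\eta)=\ee^{-\ie\pi/2\,\partial U(h)}J\eta = \ee^{\ie\pi/2\,\partial U(h)}\eta = \iota(\eta)$. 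The down-shifted map $z\mapsto U^\eta_h(z+\ie\pi/2)$ on $\cS_{\pm\pi/2}$ is then the orbit-map extension of $\iota(\eta)$ with boundary value $U^\eta_h(0)=\eta$ at $\Im z = -\pi/2$, so $\iota(\eta)\in\cH^J_{\rm temp}$ and $\beta^+(\iota(\eta))=\eta$. Conversely, for $v\in\cH^J_{\rm temp}$ the up-shifted map $z\mapsto U^v_h(z-\ie\pi/2)$ on $\oline{\cS_\pi}$ is, by (c), a weak-$*$ continuous holomorphic extension of the orbit map of $\beta^+(v)$ whose top value is $\beta^-(v)=J\beta^+(v)$ (Lemma~\ref{lem:2.11}); hence $\beta^+(v)\in\cH^{-\infty}_{U_h,{\rm KMS}}$ and $\iota(\beta^+(v))=U^v_h(0)=v$. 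Thus $\beta^+$ and $\iota$ are mutually inverse, establishing the bijection.

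The main obstacle is the single regularity fact underlying both the converse in (c) and the well-definedness of $\iota$ in (b): that weak-$*$ continuous, interior-holomorphic distribution-valued data on the closed strip forces the interior values to lie in $\cH$ (so that $\iota(\eta)=U^\eta_h(\ie\pi/2)\in\cH$, not merely in $\cH^{-\infty}_{U_h}$), together with the accompanying temperedness bound. This is precisely the $L^2$/Hardy-space boundary-value theorem \cite[Thm.~6.1]{FNO25b}, which I would invoke rather than reprove; everything else reduces to the $\ie\pi/2$-shift bookkeeping and the intertwining identity $J\,\ee^{z\partial U(h)}=\ee^{\oline z\,\partial U(h)}J$.
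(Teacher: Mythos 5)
Your proof is correct and follows essentially the same route as the paper: the paper disposes of (a) and (b) by citing \cite[Thm.~6.1, Prop.~6.2]{FNO25b}, deduces (c) from (b), and obtains (d) from the continuity of the inclusion $\cH^{-\infty}_{U_h}\into\cH^{-\infty}$, exactly as you do. The only difference is organizational: you prove (c) first and then make the content of \cite[Prop.~6.2]{FNO25b} explicit by constructing the inverse $\iota(\eta)=U^\eta_h(\ie\pi/2)$ of $\beta^+$ via the $\ie\pi/2$-shift, whereas the paper leaves that argument entirely to the cited reference; in both cases the one genuinely hard regularity input (that the mid-strip value of a KMS distribution vector lies in $\cH$ and satisfies the tempered growth bound) is imported from \cite{FNO25b} rather than reproved.
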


\begin{prf} (a) and (b) are contained in \cite[Thm.~6.1, Prop.~6.2]{FNO25b}.
	
	\nin \rm(c) The assertion follows by~(b). 
	(See also Step~(c) in the proof of \cite[Prop.~6.2]{FNO25b}.)

  \nin (d)  In view of (c), the assertion holds
  with $\cH^{-\infty}_{U_h}$ instead of $\cH^{-\infty}$,
  so that it follows by the continuity of the inclusion 
  $\cH^{-\infty}_{U_h} \into \cH^{-\infty}$.
\end{prf}

\subsection{The Reeh--Schlieder property} 

Let $(U,\cH)$ be an antiunitary representation of
$G_{\tau_h}$ and  suppose that the real linear subspace 
$\sF \subeq \cH^J_{\rm temp} \cap \cH^\omega(\Xi)$ is $G$-cyclic. 
For $v \in \cH^\omega(\Xi) \cap \cH^J_{\rm temp}$, 
relation \eqref{eq:j1} asserts that 
\[ J U^{v}(g) = U^v(\oline\tau_h(g)) \quad \mbox{ for } \quad g \in \Xi. \]
Moreover, the limit
\[ \beta^+(v)
  := \lim_{t \to -\pi/2} \ee^{\ie t\partial U(h)} v 
  = \lim_{t \to -\pi/2} U_h^v(\ie t)  \]
exists in the space $\cH^{-\infty}_{U_h, {\rm KMS}}$ 
by Proposition~\ref{prop:temp1}.  
Here we use that $U^v$ is defined on $\Xi$ and that
$\exp(\cS_{\pm \pi/2} h) \subeq \Xi$ by (Cr2), so that
$U_h^v$ extends to $\cS_{\pm \pi/2}$.

For $\sE := \beta^+(\sF) \subeq \cH^{-\infty}_{\rm KMS}$, we
now consider nets of the form 
\begin{equation}
  \label{eq:HEb}
  \sH_\sE^G(\cO) = \oline{\spann_\R U^{-\infty}(C^\infty_c(\cO,\R))\sE}.
\end{equation}
(see the introduction for more details).
	In this section we show that the subspace $\sH^G_\sE(\cO)$ is total	if $\cO \not=\eset$ (Reeh--Schlieder property).

\begin{thm} {\rm(Reeh--Schlieder Property)} \mlabel{thm:RS}
Let $(U,\cH)$ be an  antiunitary representation of $G_{\tau_h}$ and
\[ \sF \subeq \cH^J_{\rm temp} \cap \cH^\omega \]  
be a real linear subspace such that 
$U(G)\sF$ is total in $\cH$, and
\[ \sE := \beta^+(\sF) \subeq \cH^{-\infty}.\]
Then, for every non-empty open subset
$\cO \subeq G$, 
the real linear subspace $\sH_{\sE}^G(\cO)$ is total in $\cH$,
so that the net $\sH_\sE^G$ satisfies {\rm(Iso), (Cov)} and {\rm (RS)}. 
\end{thm}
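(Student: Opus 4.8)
The plan is to prove the Reeh--Schlieder property by combining $G$-cyclicity of $\sF$ with the covariance of the boundary-value map $\beta^+$ and a standard analytic-continuation (''edge of the wedge''/Schwartz-reflection-type) argument that transports cyclicity on all of $G$ to cyclicity from an arbitrary non-empty open set $\cO$. First I would fix a vector $w \in \cH$ that is orthogonal to $\sH^G_\sE(\cO)$ and aim to show $w = 0$; by definition of $\sH^G_\sE(\cO)$ in \eqref{eq:HEb}, this means
\[
\langle w, U^{-\infty}(\phi)\beta^+(v)\rangle = 0
\quad\text{for all } \phi \in C^\infty_c(\cO,\R),\ v \in \sF .
\]
Writing $\beta^+(v) = \lim_{t\to -\pi/2} U_h^v(\ie t)$ as in Proposition~\ref{prop:temp1}, and using that $U^{-\infty}(\phi)$ is the adjoint of the smoothing operator $U(\phi^*)\colon \cH \to \cH^\infty$, I would re-express the pairing as
\[
\overline{\langle \beta^+(v), U(\phi^*) w\rangle} = 0,
\]
so that the vanishing is controlled by the distribution-vector pairing of $\beta^+(v)$ against the smooth vector $U(\phi^*)w$.

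\textbf{The key analytic step} is to promote this vanishing from $\phi$ supported in $\cO$ to $\phi$ supported in all of $G$, which by $G$-cyclicity of $\sF$ together with covariance will force $w=0$. The mechanism is the holomorphic extension guaranteed by $\Xi$-analyticity: for $v \in \sF \subeq \cH^\omega(\Xi)$ the orbit map $U^v$ extends holomorphically to $\Xi$, and the function
\[
F(g) := \langle U^v(g), w\rangle = \langle U(g)v, w\rangle, \qquad g \in G,
\]
extends to a holomorphic function on $\Xi$. The hypothesis that $w \perp \sH^G_\sE(\cO)$ translates, after unwinding $\beta^+$ as a boundary value $t \to -\pi/2$, into the statement that a certain boundary distribution built from $F$ (namely the pairing against $\beta^+(v)$ convolved with test functions supported in $\cO$) vanishes on the open set $\cO$. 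Because $F$ is real-analytic on $G$ (being the restriction of a holomorphic function on $\Xi$ to a totally real submanifold), vanishing of this analytic object on the non-empty open set $\cO$ propagates to all of $G$ by the identity theorem for real-analytic functions. This is the crux: the role of the crown domain $\Xi$ and of condition (Cr2), via the inclusion $\cH^\omega(\Xi)\subeq\cH^\omega_{U_h}(\cS_{\pm\pi/2})$, is precisely to guarantee enough analyticity of the orbit maps that local vanishing forces global vanishing.

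\textbf{To close the argument}, once the pairing vanishes for all $\phi \in C^\infty_c(G,\R)$ I would use covariance. Concretely, $\langle \beta^+(v), U(\phi^*)w\rangle = 0$ for all test functions $\phi$ on $G$ means $\langle \beta^+(v), U(g)w\rangle = 0$ for all $g$ in a dense set, hence for all $g \in G$ after approximating $U(g)w$ by smooth vectors. Applying the equivariance of $\beta^+$ from Lemma~\ref{lem:beta+-equiv} --- namely $\beta^+\circ \dd U(x) = \dd U^{-\omega}(\zeta(x))\circ\beta^+$ --- together with the $G$-action, I would deduce that $w$ is orthogonal to the weak-$*$ closed span of $U(G)\beta^+(\sF)$. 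Since $\beta^+$ is injective on $\cH^J_{\rm temp}$ (Proposition~\ref{prop:temp1}(b)) and $\sF$ was assumed $G$-cyclic (so $U(G)\sF$ is total in $\cH$), one transfers totality through $\beta^+$: pairing $w$ against $U(g)v$ for $v \in \sF$ must vanish, which by $G$-cyclicity of $\sF$ gives $w = 0$. The properties (Iso) and (Cov) are immediate from the definition \eqref{eq:HEb} (monotonicity in $\cO$, and $U(g)\sH^G_\sE(\cO) = \sH^G_\sE(g\cO)$ follows from $U(g)U^{-\infty}(\phi) = U^{-\infty}(\phi\circ L_{g^{-1}})U(g)$ together with $U(g)$-invariance properties of $\sE$ under the modular/covariant structure), so the theorem follows.

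\textbf{The main obstacle} I anticipate is making the analytic-continuation step fully rigorous at the level of distribution vectors: $\beta^+(v)$ lives only in $\cH^{-\infty}_{U_h}$, so the pairing $\langle \beta^+(v), U(\phi^*)w\rangle$ must be interpreted as a boundary value, and one must justify that the vanishing of this boundary distribution on $\cO$ genuinely forces the vanishing of the holomorphic function $F$ on $\Xi$ (rather than merely the vanishing of some boundary trace). This requires controlling the interchange of the limit $t \to -\pi/2$ with the $G$-smearing and exploiting the tempered growth bound \eqref{eq:growthcond} from Definition~\ref{def:kmsn} to ensure the boundary values are genuine distributions to which a reflection/uniqueness principle applies. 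I would handle this by first establishing the vanishing of the honest holomorphic function $g \mapsto \langle U^v(g), U(\phi^*)w\rangle$ on the totally real slice $G$ before passing to the boundary, thereby reducing the delicate distributional step to the classical real-analytic identity theorem.
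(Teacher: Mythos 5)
Your overall skeleton --- take a vector orthogonal to $\sH^G_\sE(\cO)$, unwind the pairing through $\beta^+$, use analyticity to propagate local vanishing, and finish with $G$-cyclicity --- matches the paper's, and your final step (that orthogonality to $U(\cO)\sF$ implies orthogonality to $U(G)\sF$ because orbit maps of vectors in $\sF\subeq\cH^\omega$ are real-analytic) is exactly how the paper closes. But there is a genuine gap at precisely the step you flag as ``the main obstacle'', and your proposed fix does not work. Writing $v$ for the orthogonal vector and $w\in\sF$, the hypothesis gives $0=\la v,U^{-\infty}(\phi)\beta^+(w)\ra=\la U(\phi^*)v,\,U^w_h(-\tfrac{\pi\ie}{2})\ra$ for $\phi\in C^\infty_c(\cO,\R)$, i.e.\ the vanishing of a pairing against the \emph{boundary value} of the strip extension of the orbit map at the single point $z=-\tfrac{\pi\ie}{2}$; this says nothing a priori about $\la U(g)w,v\ra$ for $g\in\cO$. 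You therefore cannot ``first establish the vanishing of the honest holomorphic function on the totally real slice $G$ before passing to the boundary'' --- that inverts the logic, since the vanishing at $z=0$ is exactly what must be deduced from the boundary information. The paper's mechanism, absent from your proposal, is: since $\supp(\phi)\exp(th)\subeq\cO$ for $|t|<\eps$, one may replace $\phi$ by $\phi*\delta_{\exp(th)}$ and conclude that $t\mapsto\la U(\phi^*)v,\,U^w_h(t-\tfrac{\pi\ie}{2})\ra$ vanishes on a real \emph{interval} of the boundary line; one-variable uniqueness of analytic continuation for the function $z\mapsto\la U(\phi^*)v,U^w_h(z)\ra$, holomorphic on $\cS_{\pm\pi/2}$ and continuous up to the boundary after pairing with the smooth vector $U(\phi^*)v$, then forces it to vanish identically, in particular at $z=0$, yielding $\la v,U(\phi)w\ra=0$. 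Without this translation-by-$\exp(\R h)$ trick, knowledge at one boundary point cannot be propagated into the strip.

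Two smaller points. The appeal to Lemma~\ref{lem:beta+-equiv} and to injectivity of $\beta^+$ in your closing paragraph is a red herring: orthogonality to $U(G)\beta^+(\sF)$ in the distributional pairing does not transfer to orthogonality to $U(G)\sF$ in $\cH$ via injectivity of $\beta^+$; relating $\beta^+(w)$ back to $w$ is again the strip-continuation issue above. Also note that the theorem assumes only $\sF\subeq\cH^J_{\rm temp}\cap\cH^\omega$, not $\sF\subeq\cH^\omega(\Xi)$: the crown domain plays no role in (RS). What is actually needed is the strip-analyticity of $U^w_h$ (from temperedness) for the boundary-to-interior step, and ordinary $G$-analyticity of $U^w$ for the local-to-global step on $G$.
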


\begin{prf} We have to verify hat
    	\begin{equation}
		\label{RSgen_proof_eq1}
  \sH_\sE^G(\cO)^\perp = \{0\} \quad \mbox{ for } \quad \cO \not=\eset.
 \end{equation}
 Let $v \in \cH$. 
We have to show that, if
\begin{equation}\label{eq:skal0}
 0 = \la v, U^{-\infty}(\phi)\beta^+(w) \ra
 =\la U(\phi^*)v, U^w_h\big(-{\textstyle\frac{\pi \ie}{2}}\big) \ra
\end{equation}
holds  for all $\phi \in C^\infty_c(\cO,\R)$ and all $w \in \sF$,
then $v \bot U(G)\sF$. 
Suppose that \eqref{eq:skal0} holds for all
$\phi \in C^\infty_c(\cO,\R)$ and a vector $w\in \sF$. 
For a given $\phi$, there exists an $\eps > 0$ such that 
$\supp(\phi)\exp(th) = \supp(\phi* \delta_{\exp th})
\subeq \cO$ holds for
$|t| < \eps$. 
Then we also have
\[ \la U(\phi^*)v, U^w_h\big(t-{\textstyle\frac{\pi \ie}{2}}\big) \ra
  = \la v, U^{-\infty}(\phi) U(\exp th) U^w_h\big(-{\textstyle\frac{\pi \ie}{2}}\big) \ra
  = \la v, U^{-\infty}(\phi * \delta_{\exp(th)})
  U^w_h\big(-{\textstyle\frac{\pi \ie}{2}}\big) \ra = 0.\]
Uniqueness of analytic continuation now implies that
$\la U(\phi^*)v, U^w_h(z) \ra = 0$ 
holds for all $z \in \cS_{\pm \pi/2}$,
so that we obtain in particular
\[ 0= \la U(\phi^*)v, w \ra  =\la v, U(\phi)w \ra.\]
As $\phi \in C^\infty_c(\cO,\R)$ was arbitrary,
it follows that $v \bot U(\cO)\sF$.
Now $\sF\subeq\cH^\omega$ entails that $v \in (U(G)\sF)^\bot = \{0\}$. 
\end{prf}

\subsection{The Bisognano--Wichmann property}

We continue with the context of the preceding subsection, where
$\sF \subeq \cH^\omega(\Xi) \cap \cH^J_{\rm temp}$ and
$\sE = \beta^+(\sF)$. We now show that
the wedge region  $W^G \subeq G$ from \eqref{eq:defwg} satisfies 
$U^{-\infty}(W^G) \sE \subeq \cH^{-\infty}_{\rm KMS}.$ 
This will imply that 
$\sH^G_\sE(W^G)$ is the standard subspace $\sV$, specified by the operators 
\[   \Delta_\sV = \ee^{2\pi \ie \partial U(h)} \quad \mbox{ and } \quad J_\sV = J
  = U(\tau_h).\]

The following theorem recalls and refines a number of
  powerful tools developed in \cite{BN24} and \cite{FNO25a}. 

\begin{thm} \mlabel{thm:closed-hyperfunc}
  Let $(U,\cH)$ be an antiunitary representation of $G_{\tau_h}$,
  $J := U(\tau_h)$ and $\Delta := \ee^{2\pi \ie \partial U(h)}$.
  Write $\sV := \Fix(J \Delta^{1/2}) \subeq \cH$ for the corresponding
  standard subspace and define KMS vectors
 in $\cH^{-\infty}$ and $\cH^{-\omega}$  with respect to the
 one-parameter group $U_h(t) = U(\exp th)$ as in
   {\rm Definition~\ref{def:kms}}.
  Then the following assertions hold:
\begin{itemize}
\item[\rm(a)] $\cH^{-\omega}_{\rm KMS} =
  \{ \eta \in \cH^{-\omega} \colon (\forall v \in \sV' \cap
  \cH^\omega)\ \eta(v) \in \R\}.$ 
\item[\rm(b)]   $\cH^{-\omega}_{\rm KMS}\cap \cH = \sV.$
\item[\rm(c)] 
  Let $\sV' := J\sV$. Then
  $\cH^{-\infty}_{\rm KMS} = \{ \alpha \in \cH^{-\infty} \:
  \alpha(\sV' \cap \cH^\infty) \subeq \R\}$ and
  $\cH^{-\infty}_{\rm KMS} \cap \cH = \sV.$
\item[\rm(d)]  $\cH^{-\omega}_{U_h, {\rm KMS}} \cap \cH^{-\infty}
  \subeq \cH^{-\infty}_{\rm KMS}$.
\item[\rm(e)]   $\sV \cap \cH^{\infty}$ is a
      $\g$-invariant subspace of $\cH^\infty$, and 
$\cH^{-\infty}_{\rm KMS}$ is a $\g$-invariant subspace of $\cH^{-\infty}$.     
\end{itemize}
\end{thm}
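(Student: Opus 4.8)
The plan is to organize the five assertions around a single backbone: the analytic characterization of the standard subspace at the level of honest vectors, namely that $\xi \in \sV$ if and only if the orbit map $t \mapsto U_h(t)\xi$ extends continuously to $\overline{\cS_\pi}$, holomorphically on the interior, with boundary value $U_h(\pi\ie)\xi = J\xi$. This is merely the rewriting of $\Delta^{1/2} = \ee^{\pi\ie\,\partial U(h)}$ and $J\Delta^{1/2}\xi = \xi$, and it is the honest-vector instance of the condition in Definition~\ref{def:kms}. Throughout I would use the symplectic duality for standard subspaces, $\sV' = J\sV = \Fix(J\Delta^{-1/2})$, $\sV'' = \sV$, and $\sV = \{\xi \in \cH \: \Im\la\xi,v\ra = 0 \ \forall v \in \sV'\}$, together with the density of $\sV'\cap\cH^\omega$ (resp. $\sV'\cap\cH^\infty$) in $\sV'$, which lets reality and real-orthogonality conditions be tested on analytic or smooth vectors; this density I would draw from the smoothing techniques of the cited works.

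For (a) the strategy is the KMS-versus-reality duality. In the direction $\subseteq$, given $\eta \in \cH^{-\omega}_{\rm KMS}$ and $v \in \sV'\cap\cH^\omega$, I would pair the weak-$*$ holomorphic extension $U_h^\eta$ with the analytic vector $v$ to obtain a scalar function holomorphic on $\cS_\pi$ and continuous on $\overline{\cS_\pi}$; using $v \in \sV'$ in the form $U_h(-\pi\ie)v = Jv$ and the boundary condition $U_h^\eta(\pi\ie) = J\eta$, one reads off $\eta(v) \in \R$. The converse $\supseteq$ is the analytically hard direction: from reality on the test space $\sV'\cap\cH^\omega$ one must reconstruct the weak-$*$ holomorphic strip extension with the prescribed boundary value, and here I would invoke the edge-of-the-wedge and reflection machinery for hyperfunction-valued maps from \cite{BN24} and \cite{FNO25a}, which is precisely what the theorem packages. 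Assertion (b) is then the restriction of (a) to $\eta = \xi \in \cH$: reality of $\la\xi,v\ra$ for all $v \in \sV'\cap\cH^\omega$ together with density gives $\Im\la\xi,\sV'\ra = 0$, hence $\xi \in \sV'' = \sV$; conversely $\sV \subseteq \cH^{-\omega}_{\rm KMS}$ is immediate from the backbone.

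Assertion (c) is the distribution-level twin of (a)--(b), with test space $\sV'\cap\cH^\infty$. One inclusion is soft: an element of $\cH^{-\infty}_{\rm KMS}$ lies a fortiori in $\cH^{-\omega}_{\rm KMS}$, so by (a) it is real on $\sV'\cap\cH^\omega$, and by density of $\cH^\omega$ in $\cH^\infty$ and continuity of the pairing $\cH^{-\infty}\times\cH^\infty \to \C$ it is real on $\sV'\cap\cH^\infty$. The reverse direction requires reconstructing, from reality data, an extension valued in $\cH^{-\infty}$ (not merely $\cH^{-\omega}$), which is exactly the regularity upgrade of (d); thus (c) and (d) interlock. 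For (d) itself the point is that an element of $\cH^{-\omega}_{U_h,{\rm KMS}}$ carries a weak-$*$ holomorphic extension a priori valued in $\cH^{-\omega}_{U_h}$, and I would show that membership in $\cH^{-\infty}$ forces this extension to be valued in $\cH^{-\infty}$ and to satisfy the full KMS condition there, by pairing with $\cH^\infty$ and controlling the resulting holomorphic functions uniformly against the smooth seminorms, the technical input again being \cite{FNO25a} and \cite[Thm.~6.1]{FNO25b}.

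Finally, for (e) I would treat $\sV\cap\cH^\infty$ first and obtain the distribution statement by duality. For $\xi \in \sV\cap\cH^\infty$ and $x \in \g$, the vector $\dd U(x)\xi$ is smooth and its $U_h$-orbit map is $U_h(z)\dd U(x)\xi = \dd U(\ee^{z\,\ad h}x)U_h(z)\xi$, which extends across the strip because $z \mapsto \ee^{z\,\ad h}x \in \g_\C$ is entire and $U_h(z)\xi$ extends by $\xi \in \sV$; evaluating at $z = \pi\ie$, where $\ee^{\pi\ie\,\ad h} = \tau_h^\g$, together with $J\,\partial U(x)\,J = \partial U(\tau_h^\g x)$, gives boundary value $\dd U(\tau_h^\g x)J\xi = J\,\dd U(x)\xi$, so $\dd U(x)\xi \in \sV$ by the backbone. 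The same computation applied to $\sV'$ shows $\sV'\cap\cH^\infty$ is $\g$-invariant, and then (c) dualizes: for $\alpha \in \cH^{-\infty}_{\rm KMS}$ and $v \in \sV'\cap\cH^\infty$ one has $(\dd U^{-\infty}(x)\alpha)(v) = -\alpha(\dd U(x)v) \in \R$, whence $\dd U^{-\infty}(x)\alpha \in \cH^{-\infty}_{\rm KMS}$. I expect the main obstacle to be the converse half of (a) and the upgrade (d) --- producing genuine weak-$*$ holomorphic extensions into the correct generalized-vector space from boundary reality and regularity data --- which is where the heavy analysis of \cite{BN24,FNO25a,FNO25b} is needed; the algebraic equivariance in (e) and the density arguments are comparatively routine.
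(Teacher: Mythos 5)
Your treatment of (a)--(c) matches the paper's in substance: the paper simply cites \cite[Thm.~2, Cor.~2]{FNO25a} for (a), (b) and \cite[Thm.~6.4, Cor.~6.8]{BN24} for (c), so the only parts where your argument must stand on its own are (d) and (e), and there is a concrete gap in each. For (c)/(d): you pass from reality of $\eta$ on $\sV'\cap\cH^\omega$ to reality on $\sV'\cap\cH^\infty$ ``by density of $\cH^\omega$ in $\cH^\infty$ and continuity of the pairing''. That is not enough. You must approximate a given $v\in\sV'\cap\cH^\infty$ by vectors that \emph{remain in} $\sV'$ and converge to $v$ in the Fr\'echet topology of $\cH^\infty$ (the pairing with $\cH^{-\infty}$ is only continuous there); generic density of $\cH^\omega$ in $\cH^\infty$ gives neither constraint. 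The paper closes exactly this gap by smoothing along the modular flow: $v_n:=U_h(\gamma_n)v$ with $\gamma_n$ a real normalized Gaussian lies in $\sV'\cap\cH^\omega_{U_h}$ (because $\sV'$ is closed, real and $U_h$-invariant and $\gamma_n$ is real) and converges to $v$ in $\cH^\infty$ by \cite[Prop.~3.5(ii)]{BN24}. With this device, (d) follows purely from the reality characterizations in (a) and (c); no direct ``upgrade'' of the weak-$*$ holomorphic extension from $\cH^{-\omega}_{U_h}$-valued to $\cH^{-\infty}$-valued is needed. Your proposed route for (d) attempts that upgrade head-on and leaves the entire analytic content to the references, which is both harder and unnecessary once (a) and (c) are in hand.

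For (e), your vector-level identity $U_h(z)\,\dd U(x)\xi=\dd U(\ee^{z\ad h}x)\,U_h(z)\xi$ for $\xi\in\sV\cap\cH^\infty$ and $\Im z>0$ has an unaddressed domain problem: the analytically continued vector $U_h(z)\xi$ is only known to lie in $\cH$, not in the domain of the unbounded operator $\partial U(x)$, and the continuity of $z\mapsto \dd U(x)U_h(z)\xi$ up to the boundary of the strip is not automatic. The paper avoids this entirely by running the computation in $\cH^{-\infty}$, where $\dd U^{-\infty}(x)$ is everywhere defined and continuous, and by restricting to $x\in\g_{\pm1}(h)$ so that $\ee^{z\ad h}x=\ee^{\pm z}x$ is a scalar twist; this yields the $\g$-invariance of $\cH^{-\infty}_{\rm KMS}$ directly, and the invariance of $\sV\cap\cH^\infty=\cH^{-\infty}_{\rm KMS}\cap\cH^\infty$ then falls out of (c). Your dualization idea is sound, but it should be run in the opposite direction (distribution level first, then intersect with $\cH^\infty$), or else the domain issue must be resolved separately.
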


\begin{prf} (a) is \cite[Thm.~2]{FNO25a},  (b) is \cite[Cor.~2]{FNO25a},
  and (c) is \cite[Thm.~6.4, Cor.~6.8]{BN24}. 

\nin (d) \begin{footnote}
    {This proof for general Lie groups
      has been transcribed from \cite[Cor.~3]{FNO25a}, where only 
      semisimple groups and irreducible representations are considered.}
  \end{footnote}
  We recall from (a)   that 
  \[ \cH^{-\omega}_{U_h, {\rm KMS}}
  =  \{ \eta \in \cH^{-\omega}_{U_h} \colon (\forall v \in \sV' \cap
  \cH^\omega_{U_h})\ \eta(v) \in \R\}.\]
  For $\eta \in\cH^{-\omega}_{U_h, {\rm KMS}} \cap \cH^{-\infty}$ we therefore
  have $\eta(v) \in \R$ for $v \in \sV' \cap  \cH^\omega_{U_h}$.
  In view of (c) above, we have to show that
  $\eta(v) \in \R$ holds for any $v \in \sV' \cap \cH^\infty$.

Let $v \in \sV' \cap \cH^\infty$.
  To see that $\eta(v) \in \R$,
  we consider the normalized Gaussians 
$\gamma_n(t) = \sqrt{\frac{n}{\pi}} \ee^{-n t^2}$ 
  and recall from \cite[Prop.~3.5(ii) and \S 5]{BN24} that 
  $v_n := U_h(\gamma_n) v \in \cH^\omega_{U_h}$ 
  converges to $v$ in the Fr\'echet space
  $\cH^\infty$. As $\sV$ is $U_h$-invariant, so is
  $\sV' = J\sV$. Further, 
  $U_h$ commutes with $J$ and $\gamma_n$ is real, so that 
  $v_n \in \sV'$, hence $\eta(v_n) \in \R$ because
  $v_n \in\sV' \cap \cH^\omega_{U_h}$.
  Finally the assertion follows from $\eta(v_n) \to  \eta(v)$.

  \nin (e)  \begin{footnote}
    {The proof of this fact
      in \cite{FNO25a} uses analytic vectors. 
      The proof we give here is more direct.} 
  \end{footnote}
  Let $\eta \in \cH^{-\infty}_{\rm KMS}$,  $x\in \g_{\pm 1}(h)$
   and $\eta' := \dd U^{-\infty}(x)\eta$. 
Then
    \[
U^{\eta'}_h(t) = U^{-\infty}_h(t) \dd U^{-\infty}(x)\eta 
=     \ee^{\pm t} \dd U^{-\infty}(x)U^{-\infty}_h(t) \eta 
=     \ee^{\pm t} \dd U^{-\infty}(x) U^\eta_h(t).\]
As the operator $\dd U^{-\infty}(x) \: \cH^{-\infty} \to \cH^{-\infty}$
is continuous and $U^\eta_h$ extends to a continuous map \break 
$\oline{\cS_\pi} \to \cH^{-\infty}$ 
that is holomorphic on $\cS_\pi$, the same holds for
$U^{\eta'}_h$. Moreover, this extension satisfies
\begin{align*}
 J U_h^{\eta'}(t)
&  = \ee^{\pm t} \dd U^{-\infty}(\tau_h^\g(x)) J U^\eta_h(t)
  = \ee^{\pm t} \dd U^{-\infty}(-x) U^\eta_h(t + \pi \ie)\\
&  = \ee^{\pm (t + \pi \ie)} \dd U^{-\infty}(x) U^\eta_h(t + \pi \ie)
                  = U_h^{\eta'}(t+ \pi \ie).
\end{align*}
This implies the invariance of $\cH^{-\infty}_{\rm KMS}$  under
$\dd U^{-\infty}(\g_{\pm 1}(h))$. 
Since $\cH^{-\infty}_{\rm KMS}$ is obviously invariant under 
$\dd U^{-\infty}(\g_0(h))$, the invariance under $\g$ follows from
$\g = \g_1(h) + \g_0(h) + \g_{-1}(h)$. 
From (c) we obtain
$\sV \cap \cH^\infty
  =  \cH^{-\infty}_{\rm KMS} \cap \cH^\infty,$ 
so this space is also $\g$-invariant.
\end{prf}

Note that Theorem~\ref{thm:closed-hyperfunc}(d) asserts
that, if $\eta\in \cH^{-\omega}_{U_h, {\rm KMS}}$ is a distribution
vector for the $G$-representation $(U,\cH)$, i.e.,
contained in $\cH^{-\infty}$, then the extended orbit map
${U_h^\eta \colon \oline{\cS_\pi} \to \cH^{-\infty}}$ is
automatically weak-$*$ continuous,
and holomorphic on the interior, as an $\cH^{-\infty}$-valued map.

  \begin{rem} For $\phi \in C^\infty_c(\R,\R)$ we have
$U_h^{-\infty}(\phi) \sE \subeq \sV$ 
    because $\sE = \beta^+(\sF) \subeq \cH^{-\infty}_{U_h}$, 
    the closed subspace $\cH^{-\infty}_{\rm KMS}$ is $U_h$-invariant,
    and $\cH^{-\infty}_{\rm KMS} \cap \cH = \sV$
    (Theorem~\ref{thm:closed-hyperfunc}(c)).
    From the $U_h$-equivariance of $\beta^+$ it thus follows for
    $v \in \sF$ that
    \[ U_h(\phi)v \in \cD(\Delta_\sV^{\pm 1/4})
      \quad \mbox{ and }  \quad
      \beta^+(U_h(\phi)v) = \Delta_\sV^{-1/4} U_h(\phi)v
      = U_h(\phi) \beta^+(v) \in \sV.\]    
\end{rem}

\begin{lem} \mlabel{lem:2.16}
Suppose that $W^c = G^h_e \exp(\ie \Omega')$ with an open
convex $0$-neighborhood $\Omega' \subeq \g^{-\tau_h^\g}$
and $W^G = G^h_e \exp(i \zeta(\Omega'))$ (cf.~{\rm Remark~\ref{remW}}).
Then, for $v \in \cH^\omega(\Xi) \cap \cH^J_{\rm temp}$, we have a map 
\[ \Upsilon :=  \beta^+ \circ U^v \: W^c 
  \to \cH^{-\omega}_{U_h, {\rm KMS}} \]
with the following properties:        
\begin{enumerate}[{\rm(a)}]
\item
\label{lem:2.16_a}
It is weak-$*$ analytic.   
\item
\label{lem:2.16_b}
It is $G^h_e$-equivariant. 
\item
  \label{lem:2.16_c}
$\beta^+(U^v(\exp \ie x))
= U^{-\omega}(\exp(x_1 - x_{-1}))\beta^+(v)$
for $x = x_1 - x_{-1} \in \Omega'$.
\item
\label{lem:2.16_d}
$U^{-\omega}(W^G) \beta^+(v) \subeq
                \cH^{-\omega}_{U_h, {\rm KMS}}$.
	\end{enumerate}
\end{lem}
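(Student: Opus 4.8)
The plan is to establish well-definedness together with the target space first, then obtain \ref{lem:2.16_b} directly, then prove the analyticity \ref{lem:2.16_a}, and finally derive \ref{lem:2.16_c} and \ref{lem:2.16_d} from the infinitesimal equivariance \eqref{eq:beta+rel1}. For the well-definedness, fix $p \in W^c \subeq \Xi^{\oline\tau_h}$. Since $v \in \cH^J$ and $\oline\tau_h(p) = p$, relation \eqref{eq:j1} gives $J U^v(p) = U^{Jv}(\oline\tau_h(p)) = U^v(p)$, so $U^v(p) \in \cH^J$. By (Cr2) and the holomorphy of $U^v$, the $U_h$-orbit map $t \mapsto U^v(\exp(th).p) = U^v(\alpha^p(t))$ extends holomorphically to $\cS_{\pm\pi/2}$, and iterating \eqref{invar_left_rem_eq1} shows this extension is holomorphic into $\cH^\infty$; hence $U^v(p)$ lies in the space $\cH_1 = (\cH^\infty)^\omega_{U_h}(\cS_{\pm\pi/2})$ carrying the generalized form of Lemma~\ref{lem:beta+-equiv}. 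Being $J$-fixed with orbit map extending to $\cS_{\pm\pi/2}$, its boundary value $\beta^+(U^v(p))$ exists in $\cH^{-\omega}_{U_h, {\rm KMS}}$ by the result quoted in Definition~\ref{def:kmsn}, so $\Upsilon$ is well defined.

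For \ref{lem:2.16_b}, note that $g \in G^h_e$ satisfies $\tau_h(g) = g$, so $U(g)$ commutes with both $J$ and $U_h(t) = U(\exp th)$. Consequently $U^{-\omega}(g)$ is weak-$*$ continuous on $\cH^{-\omega}_{U_h}$, preserves the KMS condition, and commutes with the flow limit defining $\beta^+$; together with $U^v(g.p) = U(g)U^v(p)$ (valid on $G$, extended to $\Xi$ by analytic continuation) this yields $\Upsilon(g.p) = U^{-\omega}(g)\Upsilon(p)$.

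The analyticity \ref{lem:2.16_a} is the technical heart. I would consider the two-variable map $(z,p) \mapsto U^v(\exp(zh).p)$, which by (Cr2) and holomorphy of $U^v$ is holomorphic in $z \in \cS_{\pm\pi/2}$ and real-analytic in $p \in W^c$, and observe that $\Upsilon(p)$ is its weak-$*$ boundary value as $z \to -\frac{\pi\ie}{2}$. Pairing with an analytic test vector $\psi \in \cH^\omega_{U_h}$, the scalar functions $(z,p) \mapsto \la U^v(\exp(zh).p), \psi \ra$ are holomorphic in $z$ and real-analytic in $p$, and the point is that their boundary values converge locally uniformly in $p$, so that $p \mapsto \la \Upsilon(p), \psi \ra$ is real-analytic by the Weierstrass convergence theorem. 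Securing this locally uniform control of the boundary values — where the genuinely holomorphic (not merely real-analytic) parameter dependence coming from the complex manifold $\Xi$, and the hyperfunction boundary-value machinery of \cite{BN24, FNO25b}, enter — is the step I expect to be the main obstacle.

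For \ref{lem:2.16_c}, write $x \in \Omega'$ with components $x_{\pm 1} \in \g_{\pm 1}(h)$ and set $\Phi(s) := \Upsilon(\exp(\ie s x))$ for $s \in [0,1]$, using the lift \eqref{eq:exp-omega'}. Differentiating along the $G$-action as in the proof of Lemma~\ref{lem:beta+-equiv} gives $\frac{d}{ds} U^v(\exp(\ie s x)) = \dd U(\ie x) U^v(\exp(\ie s x))$ with values in $\cH_1$; applying $\beta^+$, the equivariance \eqref{eq:beta+rel1} on $\cH_1$, and $\zeta(\ie x) = x_1 - x_{-1}$ from Remark~\ref{remW}, shows that $\Phi'(s) = \dd U^{-\omega}(x_1 - x_{-1})\Phi(s)$ with $\Phi(0) = \beta^+(v)$, the interchange of $\frac{d}{ds}$ and $\beta^+$ being licensed by \ref{lem:2.16_a}. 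The curve $s \mapsto U^{-\omega}(\exp(s(x_1 - x_{-1})))\beta^+(v)$ solves the same linear equation in $\cH^{-\omega}$ with the same initial value, so uniqueness (tested against analytic vectors) gives $\Phi(1) = U^{-\omega}(\exp(x_1 - x_{-1}))\beta^+(v)$, which is \ref{lem:2.16_c}. Finally, \ref{lem:2.16_d} follows by combining \ref{lem:2.16_b} and \ref{lem:2.16_c}: any element of $W^G$ has the form $g\exp(x_1 - x_{-1})$ with $g \in G^h_e$ and $x \in \Omega'$, whence $U^{-\omega}(g\exp(x_1 - x_{-1}))\beta^+(v) = U^{-\omega}(g)\Upsilon(\exp \ie x) = \Upsilon(g.\exp \ie x) \in \cH^{-\omega}_{U_h, {\rm KMS}}$ by the well-definedness step.
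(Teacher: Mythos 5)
Your overall architecture (well-definedness via \eqref{eq:j1} and the KMS characterization of the boundary value, (b) by commutation of $G^h_e$ with $J$ and $U_h$, (d) by combining (b) and (c)) matches the paper's, and those parts are fine. The genuine gap is in part (a), and you have flagged it yourself: you reduce weak-$*$ analyticity of $\Upsilon$ to \emph{locally uniform in $p$} convergence of the boundary values of $(z,p)\mapsto \la U^v(\exp(zh).p),\psi \ra$ as $z\to -\pi\ie/2$, but you do not prove this uniform control, and nothing in the hypotheses hands it to you. The paper avoids the boundary entirely by exploiting what the weak-$*$ topology on $\cH^{-\omega}_{U_h}$ is dual to: a test vector $\xi\in\cH^\omega_{U_h}$ is an analytic vector for the one-parameter group $U_h$, hence lies in $\cD(\ee^{-\ie\eps\partial U(h)})$ for some $\eps\in(0,\pi/2)$, and with $\xi_\eps:=\ee^{-\ie\eps\partial U(h)}\xi$ one has
\[ \la \xi,\Upsilon(m)\ra
 = \la \xi_\eps,\ \ee^{\ie\eps\partial U(h)}\Upsilon(m)\ra
 = \la \xi_\eps,\ U^v\big(\exp(\ie(\eps-\pi/2)h)\,m\big)\ra, \]
an honest inner product of vectors in $\cH$ in which $m$ enters only through the analytic map $\alpha_{\ie(\eps-\pi/2)}\colon W^c\to\Xi$ composed with the holomorphic $U^v$. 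Real-analyticity in $m$ is then immediate, with no limit left to control. Without this (or a substitute), your (a) is incomplete.

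The gap propagates into (c): your interchange of $\tfrac{d}{ds}$ with $\beta^+$ is ``licensed by (a)'', so it inherits the missing step. Modulo that, your ODE-uniqueness argument for (c) is a legitimate variant of what the paper does — the paper instead compares the Taylor coefficients at $t=0$ of the two weak-$*$ analytic curves $t\mapsto\beta^+(U^v(\exp(t\ie x)))$ and $t\mapsto U^{-\omega}(\exp(t\ie\zeta(x)))\beta^+(v)$, using the iterated form of \eqref{eq:beta+rel1}; both routes rest equally on the analyticity from (a), so neither buys anything over the other. Also note the paper only needs \eqref{eq:beta+rel1} on $\cH^\omega(\Xi)$ (where Lemma~\ref{lem:beta+-equiv} is stated), so your detour through the larger space $\cH_1=(\cH^\infty)^\omega_{U_h}(\cS_{\pm\pi/2})$ in the well-definedness step is unnecessary, though not wrong.
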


\begin{prf}   
To show that the mapping $\Upsilon$ is defined, we write 
$\alpha_z(p) = \exp(zh)p \in \Xi$ for $p \in W^c, z \in \cS_{\pm \pi/2}$,
which is defined by (Cr2) and Remark~\ref{remW}.
From $\tau_h(p) = p$ and $Jv = v$, it follows that
$J U^v(g) = U^v(\tau_h(g))$ for $g\in G,$ and by analytic continuation
(since both sides are antiholomorphic)
\[ J U^v(m) = U^v(\oline\tau_h(m))\quad \mbox{ for } \quad m \in \Xi.\]
In particular, $U^v(p) \in \cH^J$ and 
\begin{equation}
  \label{eq:j-cov1}
 J U^v(\exp(zh)p) = U^v(\exp(\oline z h)p) \quad \mbox{ for }  \qquad 
 z \in \cS_{\pm \pi/2}.
\end{equation}
Therefore the existence of the limit
$\beta^+(U^v(p)) \in \cH^{-\omega}_{U_h} \subeq \cH^{-\omega}$ 
with respect to the weak-$*$ topology on $\cH^{-\omega}_{U_h}$
(hence also for $\cH^{-\omega}$) follows from  \cite[\S 2.2, Lemma~2]{FNO25a}.
Further,  the relation \eqref{eq:j-cov1} implies that
$ \beta^+(U^v(p)) \in \cH^{-\omega}_{U_h, {\rm KMS}}$ 
(cf.\ Definition~\ref{def:kms}).

We now prove 
\ref{lem:2.16_a}--\ref{lem:2.16_d}.

\nin	\ref{lem:2.16_a}
	Let  
	$m\in W^c$  	and $\xi \in \cH^\omega_{U_h}$.   
	Then there exists $\eps \in (0,\pi/2)$ with 
	$\xi \in \cD(\ee^{-\ie \eps \partial U(h)})$. Let 
	$\xi_\eps := \ee^{-\ie \eps \partial U(h)}\xi.$ 
	We
	then obtain 
	\begin{align*}
		\la \xi, \Upsilon(m) \ra
		&  = \la \ee^{\ie \eps \partial U(h)}  \xi_\eps, \Upsilon(m) \ra 
		= \la \xi_\eps, \ee^{\ie \eps \partial U(h)}  \Upsilon(m) \ra
		= \la \xi_\eps, \Upsilon(\alpha_{\ie\eps} m) \ra\\
		&  = \la \xi_\eps, U^v(\exp(\ie(\eps - \pi/2)h) m) \ra, 
	\end{align*}
	so that the assertion follows from the analyticity of
	$U^v$ on $\Xi$ and the analyticity of the map
	\[  \alpha_{\ie(\eps - \pi/2)} \: W^c \to \Xi.\]
	Finally, we observe that, by Theorem~\ref{thm:closed-hyperfunc}(a),
	the subspace $\cH^{-\omega}_{U_h, {\rm KMS}}$ is closed in 
	$\cH^{-\omega}_{U_h}$, and since $\Upsilon$ takes values in this
	subspace, it is real
	analytic as an $\cH^{-\omega}_{U_h, {\rm KMS}}$-valued map.
	
	\nin 
	\ref{lem:2.16_b}
	follows from the fact that the action of the subgroup
	$(G^h)_e = (G^{\tau_h})_e$ on $\Xi$ preserves the fixed point set 
	$\Xi^{\oline\tau_h}$ and commutes with $U_h(\R)$.
	
	\nin 
	\ref{lem:2.16_c}
From \eqref{eq:beta+rel1} we get inductively that
\begin{equation}
  \label{eq:xn}
 \beta^+ \circ \dd U(\ie x)^n = \dd U^{-\omega}(\ie \zeta(x))^n \circ \beta^+
 \: \cH^\omega(\Xi) \to \cH^{-\omega}.
\end{equation}
Further, $\exp(\ie \Omega') \subeq W^c \subeq \Xi^{\tau_h}$ 
  implies $U^v(\exp(\ie tx)) \in \cH^\omega_{U_h} \cap \cH^J$ for
  $0 \leq t \leq 1$. 
Next we note that the  $\cH^{-\omega}$-valued curves 
  \[ \gamma_1(t) := \beta^+(U^v(\exp t\ie x)) \quad \mbox{ and } \quad
    \gamma_2(t) := U^{-\omega}(\exp(t \ie\zeta(x))) \beta^+(v) \]
  are weak-$*$ analytic.
  For $\gamma_1$ this is a consequence of (a),
  and for $\gamma_2$ it follows from the
  weak-$*$ analyticity of the $G$-orbit maps
  in $\cH^{-\omega}$ (\cite[\S 2.1, Prop.~1(b)]{FNO25a}).
  As both curves have the same Taylor series in $t = 0$,
by \eqref{eq:xn}, 
  uniqueness of analytic extension  implies that 
  \[ \beta^+(U^v(\exp \ie x))
    = U^{-\omega}(\exp(\ie\zeta(x))) \beta^+(v) 
    = U^{-\omega}(\exp(x_1 - x_{-1}))\beta^+(v). \]
\nin 	
	\ref{lem:2.16_d}
        For $p = g \exp(\ie x)$, $g \in G^h_e$, $x = x_1 + x_{-1} \in \Omega'$,
        we obtain with 
        \ref{lem:2.16_c} 
        \begin{align*}
 U^{-\omega}(g \exp(x_1 - x_{-1})) \beta^+(v)
&       = U^{-\omega}(g) \beta^+(U^v(\exp(\ie x)))\\
&        =   \beta^+(U^v(g\exp(\ie x))) \in \beta^+(U^v(W^c)) 
\subeq \cH^{-\omega}_{U_h, {\rm KMS}}.\qedhere
        \end{align*}
\end{prf}

We shall also need the following proposition 
(cf.~\cite[Prop.~9]{FNO25a} for the semisimple case),
which we extend to general Lie groups~$G$, 
to verify that $\sH_\sE(W^G) \subeq \sV$. 

\begin{prop} \mlabel{prop:4.8} For an open subset
  $\cO \subeq G$ and a real subspace $\sE \subeq \cH^{-\infty}$,
  the following are equivalent:
  \begin{itemize}
  \item[\rm(a)]   $\sH_\sE^G(\cO) \subeq \sV$. 
  \item[\rm(b)] For all $\phi \in C^\infty_c(\cO,\R)$ we have
    $U^{-\infty}(\phi)\sE \subeq \sV$. 
  \item[\rm(c)] For all $\phi \in C^\infty_c(\cO,\R)$ we have
    $U^{-\infty}(\phi)\sE \subeq \cH^{-\infty}_{\rm KMS}.$ 
  \item[\rm(d)]  $U^{-\infty}(g) \sE \subeq \cH^{-\infty}_{\rm KMS}$
    for every $g \in \cO$.
  \end{itemize}
\end{prop}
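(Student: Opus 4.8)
The plan is to establish the four conditions as equivalent by first disposing of the formal equivalences (a)$\Leftrightarrow$(b)$\Leftrightarrow$(c) and then treating the genuinely analytic equivalence (c)$\Leftrightarrow$(d). For (a)$\Leftrightarrow$(b): since $\sH_\sE^G(\cO)$ is by definition the closed real span of the vectors $U^{-\infty}(\phi)\sE$, $\phi \in C^\infty_c(\cO,\R)$, and $\sV$ is a closed real subspace of $\cH$, the inclusion $\sH_\sE^G(\cO) \subeq \sV$ holds if and only if each generator $U^{-\infty}(\phi)\sE$ lies in $\sV$; the forward implication is immediate from $U^{-\infty}(\phi)\sE \subeq \sH_\sE^G(\cO)$, and the reverse from the closedness of $\sV$. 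For (b)$\Leftrightarrow$(c) I would use that $U^{-\infty}(\phi)$ maps $\cH^{-\infty}$ into $\cH$, so that $U^{-\infty}(\phi)\sE \subeq \cH$; then Theorem~\ref{thm:closed-hyperfunc}(c), which asserts $\cH^{-\infty}_{\rm KMS} \cap \cH = \sV$, converts the condition $U^{-\infty}(\phi)\sE \subeq \sV$ into $U^{-\infty}(\phi)\sE \subeq \cH^{-\infty}_{\rm KMS}$.

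For (d)$\Rightarrow$(c) I would exploit that $\cH^{-\infty}_{\rm KMS}$ is a real linear subspace cut out by real-valuedness conditions. By Theorem~\ref{thm:closed-hyperfunc}(c), $\cH^{-\infty}_{\rm KMS} = \{\alpha \in \cH^{-\infty} \: \alpha(\sV' \cap \cH^\infty) \subeq \R\}$. Writing $U^{-\infty}(\phi)\eta$ as the weak-$*$ superposition $\int_\cO \phi(g)\, U^{-\infty}(g)\eta\, dg$ (the support of $\phi$ lying in $\cO$), I would test against an arbitrary $\xi \in \sV' \cap \cH^\infty$: each pairing $\langle U^{-\infty}(g)\eta, \xi\rangle$ is real by (d) and the KMS characterization, and $\phi$ is real-valued (the Haar density contributing only a positive factor), so the integral is real. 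Hence $U^{-\infty}(\phi)\eta \in \cH^{-\infty}_{\rm KMS}$.

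For (c)$\Rightarrow$(d) the idea is an approximate-identity argument resting on the weak-$*$ closedness of $\cH^{-\infty}_{\rm KMS}$. Fix $g \in \cO$ and $\eta \in \sE$. Since $\cO$ is open, I can choose a sequence $\phi_n \in C^\infty_c(\cO,\R)$ supported in shrinking neighborhoods of $g$ and approximating the Dirac mass at $g$. Using that the matrix coefficients $g' \mapsto \langle U^{-\infty}(g')\eta, \xi\rangle$ are continuous for $\xi \in \cH^\infty$, one obtains $U^{-\infty}(\phi_n)\eta \to U^{-\infty}(g)\eta$ in the weak-$*$ topology of $\cH^{-\infty}$. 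By (c) each $U^{-\infty}(\phi_n)\eta$ lies in $\cH^{-\infty}_{\rm KMS}$, and since the latter is weak-$*$ closed---being an intersection over $\xi \in \sV' \cap \cH^\infty$ of the weak-$*$ closed conditions $\{\alpha \: \alpha(\xi) \in \R\}$, each the preimage of $\R$ under the weak-$*$ continuous evaluation $\alpha \mapsto \alpha(\xi)$---the limit $U^{-\infty}(g)\eta$ lies in $\cH^{-\infty}_{\rm KMS}$ as well.

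I expect (c)$\Rightarrow$(d) to be the main obstacle, since one must keep the approximating functions supported inside $\cO$, control the convergence in the correct weak-$*$ topology rather than the Hilbert-space topology, and carefully invoke the weak-$*$ closedness of $\cH^{-\infty}_{\rm KMS}$ read off from Theorem~\ref{thm:closed-hyperfunc}(c). The normalization of the approximate identity, together with any residual modular-function factor, only contributes a positive scalar, which is harmless because $\cH^{-\infty}_{\rm KMS}$ is a real linear subspace.
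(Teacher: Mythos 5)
Your proposal is correct and follows essentially the same route as the paper: the formal equivalences rest on the same fact $\cH^{-\infty}_{\rm KMS}\cap\cH=\sV$ from Theorem~\ref{thm:closed-hyperfunc}(c), the implication from (c) to (d) uses the same approximate-identity ($\delta$-sequence) argument supported inside $\cO$ combined with the weak-$*$ closedness of $\cH^{-\infty}_{\rm KMS}$, and the passage from (d) back to the other conditions uses the same integral-superposition argument (the paper phrases it as stability of the closed real subspace $\cH^{-\infty}_{\rm KMS}$ under integrals over compact sets, you phrase it by testing against $\sV'\cap\cH^\infty$, which is the same computation). The only difference is organizational: the paper proves the cycle (a)$\Rightarrow$(b)$\Rightarrow$(c)$\Rightarrow$(d)$\Rightarrow$(a), while you prove each biconditional directly.
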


\begin{prf}  It is clear that (a) implies (b) by the   definition
  of $\sH^G_\sE(\cO)$. Further, (b) implies (c)  because
  $\sV \subeq \cH^{-\infty}_{\rm KMS}$
  (Theorem~\ref{thm:closed-hyperfunc}(c)). 
  
  For the implication (c) $\Rightarrow $ (d), let $(\delta_n)_{n \in \N}$ be a $\delta$-sequence in $C^\infty_c(G,\R)$. 
  For $\xi \in \cH^\infty$,
    we then have 
  $U(\delta_n)\xi \to \xi$ in $\cH^\infty$ and hence also in $\cH^{-\infty}$.
  It follows in particular that
  \[U^{-\infty}(\delta_n * \delta_g) \eta
  =  U^{-\infty}(\delta_n) U^{-\infty}(g) \eta \to  U^{-\infty}(g) \eta
  \quad \mbox{ for } \quad \eta\in \cH^{-\infty}.\] 
  Hence the closedness of
  $\cH^{-\infty}_{\rm KMS}$   (Theorem~\ref{thm:closed-hyperfunc}(c)),
  shows that (c) implies (d).
  Here we use that $\delta_n * \delta_g \in C^\infty_c(\cO,\R)$
  for $g \in \cO$ if $n$ is sufficiently large. 

As the $G$-orbit maps in $\cH^{-\infty}$ are continuous
and $\cH^{-\infty}_{\rm KMS}$ is closed, hence stable under integrals over
compact subsets,
and $U^{-\infty}(C_c^\infty (\cO,\R))\cH^{-\infty} \subset \cH^\infty$, we see that (d) implies (a).
\end{prf}

The following theorem is a core result of this paper. 
It provides a general tool to construct
well-behaved nets of real subspaces on Lie groups via crown domains.

\begin{thm} \mlabel{thm:4.9}
  {\rm(Construction Theorem for nets of real subspaces)} 
  Let $(U,\cH)$ be an antiunitary
  representation of $G_{\tau_h} = G \rtimes \{\bone,\tau_h\}$ and
 \[ \sF \subeq \cH^J_{\rm temp} \cap \cH^\omega(\Xi) \] 
 be a $G$-cyclic subspace of $\cH$, i.e., $U(G)\sF$ is total in $\cH$.
 We consider the linear subspace
 \[ \sE = \beta^+(\sF)
   \subeq \cH^{-\infty}.\]
 Then the net $\sH^G_{\sE}$ on $G$ 
 satisfies {\rm (Iso), (Cov), (RS)} and
 {\rm (BW)}, in the sense that
$\sH^G_\sE(W^G) = \sV$  holds for $W^G$ as in \eqref{eq:defwg}. 
\end{thm}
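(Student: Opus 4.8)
The plan is to assemble the four properties from the machinery already developed in this section. First I would observe that the net $\sH^G_\sE$ satisfies (Iso) and (Cov) automatically by its very construction in \eqref{eq:HEb}: isotony is immediate since $\cO_1\subeq\cO_2$ gives $C^\infty_c(\cO_1,\R)\subeq C^\infty_c(\cO_2,\R)$, and covariance follows from the equivariance $U(g)U^{-\infty}(\phi) = U^{-\infty}(g_*\phi)U(g)$ together with the transformation of $C^\infty_c$ under left translation. Second, the Reeh--Schlieder property (RS) is already proved as Theorem~\ref{thm:RS}: the hypotheses there---namely $\sF\subeq\cH^J_{\rm temp}\cap\cH^\omega$ with $U(G)\sF$ total and $\sE=\beta^+(\sF)$---are precisely the hypotheses assumed here (noting $\cH^\omega(\Xi)\subeq\cH^\omega$), so nothing new is needed for this part.

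The substantive content is (BW), the claim $\sH^G_\sE(W^G) = \sV$, and I would split it into the two inclusions. For $\sH^G_\sE(W^G)\subeq\sV$, the plan is to invoke Proposition~\ref{prop:4.8} with $\cO = W^G$: it suffices to check condition (d), that $U^{-\infty}(g)\sE\subeq\cH^{-\infty}_{\rm KMS}$ for every $g\in W^G$. Since $\sE=\beta^+(\sF)$ and $W^G = G^h_e\exp(\zeta(\Omega'))$ by \eqref{eq:defwg}, this is exactly what Lemma~\ref{lem:2.16}\ref{lem:2.16_d} delivers: for each $v\in\sF$ we have $U^{-\omega}(W^G)\beta^+(v)\subeq\cH^{-\omega}_{U_h,{\rm KMS}}$, and because $\beta^+(v)\in\cH^{-\infty}_{U_h}$ (temperedness of $v$), Theorem~\ref{thm:closed-hyperfunc}(d) upgrades membership in $\cH^{-\omega}_{U_h,{\rm KMS}}\cap\cH^{-\infty}$ to membership in $\cH^{-\infty}_{\rm KMS}$. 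Here I must be careful to match the parametrization: Lemma~\ref{lem:2.16}\ref{lem:2.16_c} identifies $\beta^+(U^v(\exp\ie x))$ with $U^{-\omega}(\exp(x_1-x_{-1}))\beta^+(v)$, and the definition $\Omega=\ie\zeta(\Omega')$ in \eqref{eq:defwg} is precisely arranged so that the exponentials appearing in $W^G$ are the images $x_1-x_{-1}=\zeta(\ie x)$ of elements $\ie x\in\ie\Omega'$. Composing with the $G^h_e$-equivariance (part \ref{lem:2.16_b}) handles the $G^h_e$-factor of $W^G$.

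For the reverse inclusion $\sV\subeq\sH^G_\sE(W^G)$, the idea is that the boundary vectors already recover $\sV$ densely. Concretely, for $v\in\sF$ and $\phi\in C^\infty_c(W^G,\R)$ one has $U^{-\infty}(\phi)\beta^+(v)\in\sV$ by the first inclusion, and as $\phi$ runs through a $\delta$-sequence supported near a point of $W^G$ these vectors should exhaust a total subset of $\sV$. The natural route is to use that $\sF$ is $G$-cyclic together with the description $\cH^{-\infty}_{\rm KMS}\cap\cH=\sV$ from Theorem~\ref{thm:closed-hyperfunc}(c): since $\beta^+\colon\cH^J_{\rm temp}\to\cH^{-\infty}_{U_h,{\rm KMS}}$ is a bijection (Proposition~\ref{prop:temp1}(b)) and $U_h(\phi)\beta^+(v)=\beta^+(U_h(\phi)v)\in\sV$ returns genuine elements of the Hilbert space, smoothing $\beta^+(\sF)$ against approximate identities supported in $W^G$ yields vectors of $\sV$ whose span is dense. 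I would then argue that this span, being $U(G^h_e)$-invariant and containing a $G$-cyclic family of smoothings, is dense in $\sV$ in the standard-subspace topology.

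The main obstacle I anticipate is the reverse inclusion and, more specifically, proving density of the smoothed boundary vectors inside $\sV$ rather than merely inside $\cH$. The forward inclusion is essentially bookkeeping on top of Proposition~\ref{prop:4.8} and Lemma~\ref{lem:2.16}, but showing $\sV\subeq\sH^G_\sE(W^G)$ requires controlling the cyclicity of $\sE$ \emph{relative to $\sV$}, i.e.\ that one does not lose standardness when restricting test functions to the wedge $W^G$. I expect the key technical point to be that $W^G$ is $G^h_e$-invariant and that $\sV$ is generated as a standard subspace by the modular flow $U_h(\R)$ acting on the smoothed vectors, so that a Reeh--Schlieder-type argument internal to $\sV$---using the modular group's analyticity on $\cS_{\pm\pi/2}$ guaranteed by (Cr2)---forces the span to be all of $\sV$. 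Verifying that this internal cyclicity survives the passage to the open wedge, and does not merely give a proper standard subspace of $\sV$, is where the argument will need the most care.
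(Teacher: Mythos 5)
Your treatment of (Iso), (Cov), (RS) and of the inclusion $\sH^G_\sE(W^G)\subeq\sV$ matches the paper's proof exactly: Proposition~\ref{prop:4.8}(d) via Lemma~\ref{lem:2.16}\ref{lem:2.16_d} and Theorem~\ref{thm:closed-hyperfunc}(d) is precisely the route taken, and your bookkeeping on the parametrization $\Omega=\ie\zeta(\Omega')$ is correct.

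The reverse inclusion is where your proposal has a genuine gap, and it is exactly the point you flag as needing the most care. The tool you are missing is \cite[Prop.~3.10]{Lo08}: if a closed real subspace $\sH\subeq\sV$ is cyclic and invariant under the modular group $\Delta_\sV^{\ie\R}$ of the standard subspace $\sV$, then $\sH=\sV$. With this, the reverse inclusion is a three-line argument and none of the delicate ``density inside $\sV$'' analysis you sketch is required: since $\exp(\R h)W^G=W^G$ (the wedge is invariant under the modular flow, as $W^G=G^h_e\exp(\Omega)$ and $\exp(\R h)\subeq G^h_e$), covariance gives that $\sH^G_\sE(W^G)$ is invariant under $U(\exp\R h)=\Delta_\sV^{\ie\R}$; it is cyclic by Theorem~\ref{thm:RS}; and it is contained in $\sV$ by the forward inclusion; hence it equals $\sV$. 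Your proposed substitute --- smoothing $\beta^+(\sF)$ against approximate identities supported in $W^G$ and running ``a Reeh--Schlieder-type argument internal to $\sV$'' --- is not a proof: density of a real subspace of $\sV$ in $\cH$ does not by itself force equality with $\sV$ (that is precisely the content of Longo's proposition, which needs the modular invariance as input), and you give no mechanism for ruling out a proper cyclic subspace. So the decisive observation is not cyclicity of $\sE$ ``relative to $\sV$'' but the combination of $\exp(\R h)$-invariance of $W^G$ with the abstract modular-theoretic rigidity statement.
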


If the unitary representation $(U\vert_G,\cH)$ is
irreducible, then every non-zero subspace \break $\sF \subeq \cH^J_{\rm temp} \cap \cH^\omega(\Xi)$ is $G$-cyclic,  so that
$\sF$ as in the hypothesis exists as soon as
$\cH^J_{\rm temp} \cap \cH^\omega(\Xi)\ne\{0\}$. 

\begin{prf}[Proof of Theorem~\ref{thm:4.9}] 
	(Iso) and (Cov) are trivially satisfied and
  (RS) follows from Theorem~\ref{thm:RS}. It remains to
  verify (BW). 
By Lemma~\ref{lem:2.16}(d), we have
$U^{-\omega}(W^G) \sE \subeq \cH^{-\omega}_{U_h, {\rm KMS}}$,
and $\sF \subeq \cH^J_{\rm temp}$ implies that
$\sE \subeq \cH^{-\infty}_{U_h} \subeq \cH^{-\infty}$.
Now Theorem~\ref{thm:closed-hyperfunc}(d) yields 
\[ U^{-\infty}(W^G) \sE = 
    U^{-\omega}(W^G) \sE
    \ {\buildrel \ref{lem:2.16}(d) \over  \subeq}\
    \cH^{-\omega}_{U_h, {\rm KMS}} \cap \cH^{-\infty}
    \ {\buildrel \ref{thm:closed-hyperfunc}{\rm(d)}
      \over\subeq}\  \cH^{-\infty}_{\rm KMS},\]  
  and therefore Proposition~\ref{prop:4.8}(a),(d) imply
  that   $\sH_{\sE}^G(W^G)  \subeq \sV$. 
  In particular, $\sH_{\sE}^G(W^G)$ is separating. 

  Since $\exp(\R h) W^G = W^G$, the real linear subspace
  $\sH_{\sE}^G(W^G)$ is invariant under the modular group 
  $U(\exp \R h) = \Delta_\sV^{\ie\R}$ of the standard subspace~$\sV$.
  It is cyclic by Theorem~\ref{thm:RS},
  so that \cite[Prop.~3.10]{Lo08} implies that
$\sH_{\sE}^G(W^G)  = \sV$. 
\end{prf}

We note the following immediate corollary: 
\begin{cor} \mlabel{cor:pullback}
  {\rm(The pullback construction)} Let 
 $\phi \:   (G_1, h_1, \Xi_1) \to  (G_2, h_2, \Xi_2)$ 
  be a morphism of   crowned Lie groups
  and $(U,\cH)$ an antiunitary representation of
  $G_{2,\tau_{h_2}}$. Let 
 \[ \sF \subeq \cH^J_{\rm temp} \cap \cH^\omega(\Xi_2) \] 
 be a $G_2$-cyclic subspace of $\cH$
 and consider the antiunitary representation
 $U_1 := U \circ \phi$ of $G_{1,\tau_{h_1}}$ on
 the $G_1$-cyclic subspace $\cH_1 \subeq \cH$
 generated by $\sF$. Then $\sE = \beta^+(\sF) \subeq \cH_1^{-\infty}$
 and the net $\sH^{G_1}_{\sE}$ on $G_1$ satisfies {\rm(Iso), (Cov), (RS)} and
 {\rm (BW)}.  
\end{cor}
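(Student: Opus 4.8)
The plan is to verify that the hypotheses of Theorem~\ref{thm:4.9} transfer along the morphism $\phi$ to the pulled-back data $(U_1, \cH_1)$, and then simply invoke that theorem. The key point is that all the relevant structure is compatible with $\phi$: the morphism restricts to a Lie group homomorphism $\phi_G \colon G_1 \to G_2$ with $\L(\phi_G)h_1 = h_2$, intertwining the $\tau_{h_1}$- and $\tau_{h_2}$-actions, so that $U_1 := U \circ \phi$ is a genuine antiunitary representation of $G_{1,\tau_{h_1}}$ with the \emph{same} conjugation $J = U(\tau_{h_2}) = U_1(\tau_{h_1})$ and the \emph{same} modular operator, since $\partial U_1(h_1) = \partial U(\L(\phi_G)h_1) = \partial U(h_2)$. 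Consequently the standard subspace $\sV = \sV(h_1, U_1) = \sV(h_2, U)$ and the wedge analysis are unchanged.

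First I would check that $\sF \subeq \cH_1$ and that $\sF$ sits inside the analogous spaces for $U_1$. For the strip-analyticity and temperedness, the relevant one-parameter group is $U_1(\exp t h_1) = U(\exp t\,\L(\phi_G)h_1) = U(\exp t h_2)$, which is literally the one-parameter group defining $\cH^J_{\rm temp}$ for $U$; hence $\cH^J_{\rm temp}$ and the map $\beta^+$ computed for $U_1$ agree with those for $U$ on $\sF$. For the crown-analyticity, I would use that $\phi \colon \Xi_1 \to \Xi_2$ is holomorphic and intertwines the group actions: if $v \in \cH^\omega(\Xi_2)$ has orbit-map extension $U^v \colon \Xi_2 \to \cH$, then for $U_1$ the orbit map is $U_1^v = U^v \circ \phi$ on $G_1 \subeq \Xi_1$, which extends holomorphically to all of $\Xi_1$ by composing the holomorphic extension with $\phi$. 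Thus $\sF \subeq \cH_1^J{}_{,\rm temp} \cap \cH_1^\omega(\Xi_1)$, where these spaces are now taken with respect to $U_1$ on $\cH_1$.

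Next I would confirm the $G_1$-cyclicity: by construction $\cH_1$ is \emph{defined} as the closed $U_1(G_1)$-invariant subspace generated by $\sF$, so $U_1(G_1)\sF$ is total in $\cH_1$ by fiat, and $\sE = \beta^+(\sF) \subeq \cH_1^{-\infty}$ because $\beta^+$ is computed by the same limiting procedure along $U(\exp t h_2)$ and lands in the distribution-vector space of $\cH_1$. With all hypotheses of Theorem~\ref{thm:4.9} now verified for the crowned Lie group $(G_1, h_1, \Xi_1)$, the representation $U_1$ on $\cH_1$, and the subspace $\sF$, the conclusion follows directly: the net $\sH^{G_1}_\sE$ satisfies (Iso), (Cov), (RS) and (BW), with wedge region $W^{G_1}$ associated to $(G_1, h_1, \Xi_1)$ as in \eqref{eq:defwg}.

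I expect the only genuinely nontrivial bookkeeping to be the compatibility of the distribution-vector spaces $\cH_1^{-\infty}$ and $\cH^{-\infty}$: a priori the smooth/distribution vectors for the subrepresentation $U_1\vert_{G_1}$ on the invariant subspace $\cH_1$ need to be related to those for $U\vert_{G_2}$ on $\cH$, and one must check that $\beta^+(\sF)$, originally a subspace of $\cH^{-\infty}$, indeed lies in $\cH_1^{-\infty}$. This is handled by the observation that $\cH_1$ is $U_1(G_1)$-invariant and $h_1$-invariant, so the modular flow $U(\exp t h_2)$ preserves $\cH_1$; the limit defining $\beta^+(v)$ for $v \in \sF \subeq \cH_1$ is taken along this flow and therefore remains in the $\cH_1$-distribution completion. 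Everything else is a formal translation along the morphism, so the statement is genuinely an ``immediate corollary.''
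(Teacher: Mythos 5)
Your proposal is correct and follows exactly the route the paper intends: the paper states this as an ``immediate corollary'' of Theorem~\ref{thm:4.9} with no written proof, and your verification that the hypotheses transfer along $\phi$ (same conjugation $J$, same modular group since $\L(\phi_G)h_1 = h_2$, holomorphic extension of orbit maps via $U^v\circ\phi$, and $G_1$-cyclicity of $\sF$ in $\cH_1$ by construction) is precisely the bookkeeping that makes that application legitimate. The care you take with the embedding $\beta^+(\sF)\subeq \cH^{-\infty}_{U_{h_2}}\cap\cH_1 \to \cH_1^{-\infty}$ via invariance of $\cH_1$ under the modular flow is the one point worth spelling out, and you handle it correctly.
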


Since $\SL_n(\R)$ has a natural crown domain 
$\Xi_{\SL_n(\C)}\subeq\SL_n(\C)$,
  morphisms of crowned Lie groups
  $(G, h, \Xi) \to (\SL_n(\R), h', \Xi_{\SL_n(\C)})$
  can be used to obtain situations where the Construction
  Theorem applies. In Section~\ref{sec:5} we use this technique
in particular for the split oscillator group.

\subsection{Complex Olshanski semigroups}

We return to Example~\ref{ex:olshanski},
  where 
  \[ S = M \exp(\ie C^\circ) \subeq M_\C \]
  is a complex Olshanski semigroup,
  $s_0 := \exp(\ie(x_1 - x_{-1})) \in S^{\oline\tau_h}$
  with $x_{\pm 1} \in C_\pm^\circ$, 
 and assume that $(U,\cH)$ is an antiunitary representation
 of $G_{\tau_h}$ for which
\[ C \subeq C_U \cap \fm, \quad \mbox{ where } \quad
   C_U = \{ x \in \g \: -\ie \cdot \partial U(x) \geq 0\}\] 
is the {\it positive cone of $U$}. 
 This implies that $U$ extends to a strongly continuous contraction representation
 of $\oline S := M \exp(\ie C)$ by
 \[ U(m \exp(\ie x)) = U(m) \ee^{\ie \partial U(x)},\quad  m \in M, x \in C,\]
 and $U\res_{S} \: S \to B(\cH)$ is holomorphic 
with respect to the operator norm topology 
 (cf.\ \cite[Thm.~XI.2.5]{Ne00}). 
 For any $v \in \cH^J$, we thus obtain a holomorphic orbit map
 \[ U^v \: S \to \cH, \quad U^v(s) = U(s)v.\]
 It follows in particular, that the map
 \begin{equation}
   \label{eq:holo1}
   \Xi \to B(\cH), \quad (g,z) \mapsto U^v((g,z).s_0)
 \end{equation}
 is holomorphic.  We claim that 
 \begin{equation}
   \label{eq:temp-xi-1}
U(s_0)\cH^J_{\rm temp} \subeq \cH^\omega(\Xi) \cap \cH^J_{\rm temp} 
\end{equation}
in particular $\cH^\omega(\Xi)\cap \cH_{\rm temp}^J\neq \{0\}$.
 So let $w \in \cH^J_{\rm temp}$ and
 $v := U(s_0)w$. Then
 \[  U^v(g,t)
   = U(g) U_h(t) U(s_0) w 
   = U(g \alpha_t(s_0)) U_h(t) w 
   = U((g,t).s_0) U_h^w(t).\]
 As the evaluation map $B(\cH) \times \cH \to \cH$ is holomorphic,
 the representation $U \: S \to B(\cH)$, and
 thus the orbit map $U^w \: S \to \cH$, are holomorphic, 
for $w \in \cH^J_{\rm temp} \subeq \cH^\omega_{U_h}(\cS_{\pm \pi/2})$, 
 the prescription $U^v(g,z)   := U((g,z).s_0) U_h^w(z)$ 
 defines a holomorphic map $\Xi \to \cH$, extending
 the orbit map $U^v$ on~$G$.
 This proves~$v \in \cH^\omega(\Xi)$.
 To see that also $v \in \cH^J_{\rm temp}$, note that 
\[ U^v(e,t)  = U(\exp th) U(s_0)w = U(\alpha_t(s_0)) U_h^w(t) 
  \quad \mbox{ for }  \quad t \in \R \]
implies by analytic continuation 
\[ e^{\ie t \partial U(h)} v =  U^v(e,\ie t) = U(\alpha_{\ie t}(s_0)) U_h^w(\ie t)
  \quad \mbox{ for } \quad |t| < \pi/2.\]
As $U(S)= U(G) e^{\ie \partial U(C)}$ 
consists of contractions, 
we then obtain
$\|e^{\ie t \partial U(h)} v \| \leq \|U_h^w(\ie t)\|.$ 
It thus follows with \eqref{eq:growthcond} 
that $w \in \cH^J_{\rm temp}$ implies that $v \in \cH^J_{\rm temp}$.  

We conclude that the dense subspace $U(s_0) \cH^J_{\rm temp}$
is contained $\cH^\omega(\Xi) \cap \cH^J_{\rm temp}$, so that this
subspace is dense as well, and thus Theorem~\ref{thm:4.9} applies
to all antiunitary representations $(U,\cH)$ of
$G_{\tau_h}$ with $C \subeq C_U$.

\subsection{Semisimple Lie groups}
\mlabel{subsec:semisim}

The following theorem builds on the analytic extension
results from \cite{KSt04}, the connection with standard
subspaces from \cite{FNO25a} and the extension
of \cite{KSt04} to non-linear groups from~\cite{Si24},
which also contains the result on the temperedness, resp.,
  the growth condition~\eqref{eq:growthcond}. 

\begin{thm} \mlabel{thm:gss} Suppose that $\g$ is semisimple,
  $\g = \fk \oplus \fp$ is a Cartan decomposition, and that 
  $h \in \fp$ an Euler element. We consider two situations: 
  \begin{itemize}
  \item[\rm(a)] If $G \subeq G_\C$,
    $K = \exp_G \fk$ and $K_\C = K \exp(\ie \fk)\subeq G_\C$, 
    then we consider the domain 
\[ \Xi_{G_\C} = G \exp(\ie\Omega_\fp) K_\C \subeq G_\C,
    \quad \mbox{ where } \quad
    \Omega_\fp = \{ x \in \fp \: \Spec(\ad x) \subeq
    (-\pi/2,\pi/2)\},\]
  and
  \begin{equation}
    \label{eq:wc-ss}
    W^c = G^h_e \exp(\ie \Omega_\fp^{-\tau_h^\g}) K_\C^{-\oline\tau_h},
  \end{equation}
\item[\rm(b)] If $G$ is simply connected,
  $K = \exp_{G_\C}(\fk)$, and $K_\C = K \exp_{G_\C}(\ie \fk)$ is the
    universal complexification of $K$,   then we consider 
the  simply connected covering manifold $\Xi := \tilde\Xi_{G_\C}$
and
\begin{equation}
    \label{eq:wc-ssb}
    W^c = G^h_e.\exp(\ie \Omega_\fp^{-\tau_h^\g}). \tilde K_\C^{-\oline\tau_h},
  \end{equation}
  with respect to the $G$-action from the left and
  the $\tilde K_\C$-action from the right. 
  \end{itemize}
  Then, in both cases, {\rm(Cr1-3)} are satisfied,
  and,   for every irreducible antiunitary representation 
  $(U,\cH)$ of $G_{\tau_h}$, 
  the space $\cH^\omega(\Xi) \cap \cH^J_{\rm temp}$
  is dense in~$\cH^J$.
\end{thm}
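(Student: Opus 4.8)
The plan is to separate the two assertions: the verification of (Cr1--3), which is a geometric matter concerning the semisimple crown $\Xi(G/K)$, and the density statement, which is the analytic core and rests on the extension theorems of Kr\"otz--Stanton \cite{KSt04} and Simon \cite{Si24}.

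For (Cr1--3) in case (a), the domain $\Xi = \Xi_{G_\C} = G\exp(\ie\Omega_\fp)K_\C$ is an open subset of $G_\C$ that is manifestly invariant under left translations by $G$ and right translations by $K_\C$, and (Cr3) is automatic since $G \subeq G_\C$. The only substantive point in (Cr1) is $\oline\tau_h(\Xi) = \Xi$: because $h \in \fp$, the involution $\tau_h^\g = \ee^{\pi\ie\ad h}$ commutes with the Cartan involution, hence preserves $\g = \fk \oplus \fp$ and the spectrally defined set $\Omega_\fp$, which gives both the invariance and the antiholomorphy of $\oline\tau_h$. For (Cr2) the decisive feature is that $h$ is an \emph{Euler} element, so that $\Spec(\ad(th)) \subeq \{-t,0,t\}$ lies in $(-\pi/2,\pi/2)$ exactly when $|t|<\pi/2$; writing $p = g_0\exp(\ie y)k \in W^c$ with $g_0 \in G^h_e$ and $y = y_1 + y_{-1}$ ($y_{\pm1}\in\g_{\pm1}(h)$), I would use $\Ad(\exp(\ie th))y = \ee^{\ie t}y_1 + \ee^{-\ie t}y_{-1}$ together with the convexity of $\Omega_\fp$ to keep $\exp(\cS_{\pm\pi/2}h).p$ inside $\Xi$, citing the structure theory of the crown from \cite{AG90,Kr08,Kr09}. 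Case (b) reduces to case (a) by lifting along the covering $\eta_\Xi \colon \tilde\Xi_{G_\C} \to \Xi_{G_\C}$, with $G$ embedding as a closed totally real submanifold by the construction underlying \cite{Si24}.

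For the density of $\cH^\omega(\Xi) \cap \cH^J_{\rm temp}$ in $\cH^J$, I would \emph{not} use the irreducibility shortcut of Lemma~\ref{invar_left_lem}, since $\cH^J_{\rm temp}$ is not $G$-invariant; instead I would exhibit an explicit dense family, namely the $K$-finite vectors, which are analytic (Harish--Chandra) and dense in $\cH$. By the Kr\"otz--Stanton Extension Theorem \cite{KSt04} (and Simon's extension \cite{Si24} in the non-linear case (b)), the orbit map of every $K$-finite vector extends holomorphically from $G/K$ to the crown, equivalently from $G$ to $\Xi$, so the $K$-finite vectors lie in $\cH^\omega(\Xi)$. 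Since $\tau_h^\g$ preserves $\fk$, the conjugation $J = U(\tau_h)$ maps $K$-finite vectors to $K$-finite vectors; hence, for any $K$-finite $\xi$, the vector $\tfrac12(\xi + J\xi)$ is a $J$-fixed $K$-finite vector, and as the $K$-finite vectors are dense in $\cH$, such $J$-fixed $K$-finite vectors are dense in $\cH^J$.

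It then remains to see that these $J$-fixed $K$-finite vectors actually lie in $\cH^J_{\rm temp}$, and this is where I expect the \textbf{main obstacle}. The holomorphic extension to the crown is not by itself enough: one must upgrade the boundary value $\beta^+(v)$ from a hyperfunction vector (which always exists by \cite[\S 2.2, Lemma~2]{FNO25a}) to a genuine distribution vector in $\cH^{-\infty}_{U_h}$, equivalently establish the polynomial growth bound \eqref{eq:growthcond} as $t \to \pm\pi/2$. This temperedness is precisely the second main output of \cite{Si24} (see also \cite{FNO25b}), obtained by controlling how $\exp(\ie th)$ approaches the boundary of the crown; for a non-linear group even the extension itself requires Simon's generalization, and I would check that the admissibility furnished by irreducibility is what makes both cited results applicable. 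Once the bound \eqref{eq:growthcond} is in hand, Proposition~\ref{prop:temp1}(a) (or Lemma~\ref{lem:2.11}(b) together with $Jv = v$) yields $v \in \cH^J_{\rm temp}$, so the $J$-fixed $K$-finite vectors form a dense subspace of $\cH^J$ contained in $\cH^\omega(\Xi)\cap\cH^J_{\rm temp}$, completing the proof.
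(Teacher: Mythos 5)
Your proposal follows essentially the same route as the paper: (Cr1) from invariance of the three factors under $\oline\tau_h$, (Cr3) trivially in case (a) and by lifting along the covering in case (b), and the density statement via $J$-fixed $K$-finite vectors, which lie in $\cH^\omega(\Xi)$ by Kr\"otz--Stanton/Simon and in $\cH^J_{\rm temp}$ by Simon's temperedness result \cite[Thm.~3.2.6]{Si24} --- exactly the paper's argument, including your correct observation that the irreducibility shortcut cannot be used because $\cH^J_{\rm temp}$ is not $G$-invariant. The one soft spot is (Cr2): the computation $\Ad(\exp(\ie th))y = \ee^{\ie t}y_1 + \ee^{-\ie t}y_{-1}$ does not by itself place $\exp(\ie th)g_0\exp(\ie y)k$ back into $G\exp(\ie\Omega_\fp)K_\C$, since $\ee^{\ie t}y_1+\ee^{-\ie t}y_{-1}$ is a genuinely complex element not lying in $\ie\Omega_\fp$, and producing the required $G\exp(\ie\Omega_\fp)K_\C$-decomposition of the product is precisely the nontrivial content; the paper instead descends to the symmetric-space crown $\Xi_{G_\C/K_\C}$, identifies $W^c$ with all of $\Xi_{G_\C}^{\oline\tau_h}$ via the $K_\C^{\oline\tau_h}$-principal bundle structure (using \cite[Thm.~6.1]{NO23}), and quotes the inclusion $\exp(\cS_{\pm\pi/2}h)\,\Xi_{G_\C/K_\C}^{\oline\tau_h}\subeq\Xi_{G_\C/K_\C}$ from \cite[\S 8]{MNO24}. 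Since you do ultimately defer to the crown literature for this step the argument closes, but the convexity sketch should be replaced by the precise reference.
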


\begin{prf} (a) As $\Xi_{G_\C}$ is the inverse image of the open crown domain
  \[ \Xi_{G_\C/K_\C} = G \exp(\ie\Omega_\fp)K_\C
    \cong G \times_K \ie \Omega_\fp \]
  in $G_\C/K_\C$,   it is an
  open subset of $G_\C$ which is a holomorphic $K_\C$-principal
  bundle over the contractible space $\Xi_{G_\C/K_\C}$.
  Condition (Cr1) follows from the invariance of
  $G$, $\exp(\ie \Omega_\fp)$ and $K_\C$ under the antiholomorphic
  involution $\oline\tau_h$.
  To verify (Cr2), we first observe that $W^c
  \subeq \Xi_{G_\C}^{\oline \tau_h}$ follows from the fact that all
  $3$ factors in \eqref{eq:wc-ss} consist of fixed points of $\oline\tau_h$.
  The set of $\oline\tau_h$-fixed points in $\Xi_{G_\C}$
  is a fiber bundle over
  \[ \Xi_{G_\C/K_\C}^{\oline\tau_h}
    \cong G^h_e \times_{K^h_e} \ie \Omega_\fp^{-\tau_h^\g}
\into  G_\C^{\oline\tau_h}/K_\C^{\oline\tau_h}\]
  (\cite[Thm.~6.1]{NO23}),
  so that the $K_\C$-principal bundle structure of $\Xi_{G_\C}$
  implies that it is a $K_\C^{\oline\tau_h}$-principal bundle.
  We conclude that $W^c = \Xi_{G_\C}^{\oline \tau_h}$.
The inclusion $\exp(\cS_{\pm \pi/2}h) W^c \subeq \Xi_{G_\C}$ is equivalent to 
$\exp(\cS_{\pm \pi/2}h) \Xi_{G_\C/K_\C}^{\oline\tau_h} \subeq \Xi_{G_\C/K_\C}$,
which has been shown in \cite[\S 8]{MNO24}.

\nin (b) We now assume that $G$ is simply connected, 
  so that $\eta_G \: G \to G_\C$ has discrete kernel and
  $G_\C$ is simply connected. 
  The discussion in the proof of \cite[\S 3, Prop.~5]{FNO25a} shows
  that the simply connected covering
  $\Xi := \tilde\Xi_{G_\C}$ is a complex manifold which is a
  $\tilde K_\C$-principal bundle over the contractible space
  $\Xi_{G_\C/K_\C}$. 
  This implies that 
  $\eta_G \: G \into \Xi_{G_\C}$ lifts to an embedding 
  $G \into \Xi$ and
  $\pi_1(\eta_G(G)) \cong \ker(\eta_G) \cong \pi_1(\Xi_{G_\C})$
  acts as a group  of deck transformations on $\Xi$.
  We thus obtain a free action of $G$ on $\Xi$ on the left, 
  and a free holomorphic action of $\tilde K_\C$ from the right,
  where $\tilde K_\C$ denotes the simply connected
 universal complexification of the integral subgroup
 $\tilde K = \exp_G \fk \subeq G$. 
The exponential map $\ie \Omega_\fp \to \Xi_{G_\C}$ also lifts
  to a map $\exp \: \ie\Omega_\fp \to \Xi_{G_\C}$. 
In this sense, we obtain a factorization 
  \[ \Xi = G \exp(\ie \Omega_\fp) \tilde K_\C. \]
The involution $\tau_h$ of $G$ extends to an antiholomorphic
involution $\oline\tau_h$ on $\Xi$ (by the Lifting Theorem for Coverings),
and we obtain a connected open subset 
  \[ W^c := G^h_e \exp(\ie \Omega_\fp^{-\tau_h^\g}) \tilde K_\C^{\oline\tau_h}
    \subeq \Xi^{\oline\tau_h}. \]
Here $\tilde K_\C^{\oline\tau_h}$ is connected because
  $\tilde K_\C$ is simply connected (\cite[Thm.~IV.3.4]{Lo69}).  

  We thus obtain crowned Lie groups in both situations.
  The Kr\"otz--Stanton Extension Theorem and Simon's generalization \cite{Si24}
  for non-linear groups (needed in case (b)) imply that,
  for every irreducible antiunitary representation
  $(U,\cH)$ of $G_{\tau_h}$, 
  the space $\cH^{[K]}$ of $K$-finite vectors is contained in
  $\cH^\omega(\Xi)$. By \cite[Thm. 3.2.6]{Si24},
   the space $\cH^{[K]} \cap \cH^J$
  of $J$-fixed $K$-finite vectors, which is dense in $\cH^J$, is contained
  in $\cH^J_{\rm temp}$. 
\end{prf}

\subsection{The Poincar\'e group}

We consider the Poincar\'e group
\[ G := \R^{1,d} \rtimes \SO_{1,d}(\R)_e
  \subeq G_\C := \C^{1+d} \rtimes \SO_{1+d}(\C)\]
and the Euler element $h \in \so_{1,d}(\R) \subeq \g$, generating
a Lorentz boost:
\[ h(x_0, x_1, \ldots, x_d) := (x_1, x_0,0,\cdots, 0).\]
The corresponding involution
\[  e^{\pi \ie h} = \diag(-1,-1,1,\ldots, 1) \in \SO_{1,d}(\R) \]
acts by conjugation on $G_\C$ (denoted $\tau_h)$ and 
we also obtain an antiholomorphic involution on $G_\C$ 
by $\oline\tau(g) := \tau_h(\oline g)$. 

We consider the action of $G_{\C,\oline\tau_h}$ on $M = \C^{1+d}$ by affine maps, and
\[ \tau_M(z_0, \cdots, z_d) = (-\oline z_0, -\oline z_1,
  z_2, \cdots, z_d).\]
Write 
\[  V_+ := \{ x = (x_0, \bx) \in \R^{1,d} \: x_0 > \bx^2 \} \]
for the future light cone in Minkowski space $V := \R^{1,d}$.
Then 
\[ \Xi_M := \R^{1,d} + \ie V_+
= \{z \in \C^{1+d} \: \Im z \in V_+ \} \]
is an open tube domain in $M$, invariant under $G_{\tau_h}$,
and the subset of $\tau_M$-fixed points is 
\[ \Xi_M^{\tau_M} = \{ (\ie y_0, \ie y_1, y_2, \ldots, y_d) \:
  y_j \in \R, y_0 > |y_1| \} \subeq i \R^2 \oplus \R^{d-1}.\]
For $z := (\ie y_0, \ie y_1, y_2, \ldots, y_d) \in \Xi_M^{\tau_M}$, we have 
$\Im(e^{\ie th}z) = (\cos(t)y_0, \cos(t)y_1, 0,\ldots,0),$ 
so that
\[ e^{\ie t h} \Xi_M^{\tau_M}  \subeq \Xi_M \quad \mbox{ for } \quad
  |t| < \pi/2.\]
We thus put $W^{M,c} :=  \Xi_M^{\tau_M}$. 

For $m_0 := \ie \be_0 \in \Xi_M^{\tau_M}$ we now obtain a crown domain
\[ \Xi := \{ g \in G_\C \: g.\ie \in \Xi_M \},\]
and we put 
\[ W^c := \{ g \in G_\C^{\oline\tau_h} \: g.\ie \in \Xi_M \} 
  = \{ g \in G_\C^{\oline\tau_h} \: g.\ie \in W^{M,c} \}
  = \Xi^{\oline\tau_h}.\]
By Lemma~\ref{lem:Mxi},
  this data defines a crowned Lie group $(G,h, \Xi)$. 

The unitary representations
of $G$ that are most relevant in Physics can be realized 
on a Hilbert space $\cH$ of holomorphic functions $f \: \Xi_M \to \cK$,
where $\cK$ is a finite-dimensional Hilbert space, and $\cH$
is specified by a reproducing kernel of the form
\begin{equation}
  \label{eq:K(z,w)}
 K(z,w) = \tilde\mu(z - \oline w) =
 \int_{V_+^\star} e^{\alpha(z - \oline w)}\, d\mu(\alpha),
\end{equation}
where $\mu$ is a tempered $\Herm(\cK)_+$-valued measure on the dual cone 
\[ V_+^\star = \{ \lambda \in  V^* \: \lambda(V_+)
\subeq [0,\infty[\}.\]  
Its Fourier transform 
is considered as a holomorphic function $\tilde\mu \: \Xi_M \to B(\cK)$
whose boundary values define an element in $\cS'(\R^{1+d}, B(\cK))$. 
More concretely, there exists a
representation ${\rho \: G \to \GL(\cK)}$ such that
\[ (U(x,g)f)(z)  = \rho(g) f((x,g)^{-1}.z)  = \rho(g) f(g^{-1}.(z-x)),
  \quad  (x,g) \in G, z \in \Xi_M.\] 
We refer to \cite{NOO21} for a detailed discussion of the analytic aspects
of such Hilbert spaces and the standard subspaces associated to~$h$.
We extend $U$ to an antiunitary representation of $G_{\tau_h}$ by
\[ (Jf)(z) := J_\cK f(\tau_M(z)),\]
where $J_\cK$ is a conjugation on $\cK$.
Then $\cH^J$ consists of those functions
with $f(\Xi_M^{\tau_M}) \subeq \cK^{J_\cK}$
(\cite[Lemma~2.5]{NOO21}).

Let
\[ K_z \: \cH \to \cK, \quad K_z(f) := f(z) \]
denote the evaluation operator in $z \in \Xi_M$ and
$K_z^* \: \cK \to \cH$ its adjoint. Then the functions 
\begin{equation}
  \label{eq:kwxi}
  K_w^* \xi, \quad w \in \Xi_M^{\tau_M}, \xi \in \cK^{J_\cK}
\end{equation}
are contained in $\cH^J$ and span a dense subspace thereof
(\cite[Lemma~3.11]{NOO21}).
A straightforward calculation shows that
\[ U(g) K_w^*\xi = K_{g.w}^* \rho(g^{-1})^* \xi.\]
As the representation $\rho$ extends to a holomorphic representation
of $G_\C$, it follows that $K_w^*\xi \in \cH^\omega(\Xi)$ if 
$w \in W^{M,c} = \Xi_M^{\tau_M}$, and thus
all functions \eqref{eq:kwxi} are contained in $\cH^\omega(\Xi)^J$.
To see that they are actually contained in $\cH^J_{\rm temp}$,
we need to estimate the norms
$\|e^{\ie t \partial U(h)} K_w^*\xi\|$ for $|t| \to \pi/2$
(cf.\ Definition~\ref{def:kmsn}).
As $\rho(\exp(\ie th))$ is bounded for $|t| \leq \pi/2$, we have to verify
that
\[ \| K(e^{\ie th}w, e^{\ie th}w)\| \leq C \Big(\frac{\pi}{2} - |t|\Big)^{-N}
\quad \mbox{ for some } \quad C, N > 0.\] The operator
$K(e^{\ie th}w, e^{\ie th}w)$ is the Fourier transform of $\mu$, evaluated
in
\[ e^{\ie th} w - e^{-\ie th} \oline w, \quad
  w = (\ie  y_0, \ie  y_1, y_2, \ldots, y_d),\]
so that
\begin{equation}
  \label{eq:expws}
  e^{\ie th} w - e^{-\ie th} \oline w
  = \cos(th) (\ie y_0 \be_0 + \ie y_1 \be_1) \in \R \ie \be_0 + \R \ie \be_1.
\end{equation}
In view of \cite[Prop.~4.11, \S 2.3]{FNO25a}, the
temperedness of the measure $\mu$ yields an estimate
\[ \|\tilde\mu(x + \ie y)\| \leq C \|y\|^{-N}\quad \mbox{ for } \quad
  x + \ie y \in \R^{1,d} + \ie V_+, \]
and we conclude from \eqref{eq:expws}
that the functions $K_w^*\xi$ are contained in $\cH^J_{\rm temp}$. 
Therefore all our assumptions are satisfied for the finite-dimensional space
\[ \sF := \{ K_{\ie \be_0}^* \xi \:  \xi \in \cK^{J_\cK}\}.\]
We refer to \cite{NOO21} for detailed descriptions of the corresponding
standard subspaces $\sV \subeq\cH$.

\section{The affine group of the real line}
\mlabel{sec:3}

In this section we take a closer look at the affine
group $G \cong \R \rtimes \R_+$ 
of the real line. We shall use the notation
introduced in Example~\ref{ex:affine-group}. 
A careful analysis of its representation $(U,\cH)$ 
on the Hardy space
\[ H^2(\C_+) := \Big\{f \in \cO(\C_+) \:
  \sup_{y > 0} \int_\R |f(x + iy)|^2\, d x < \infty \Big\}\]
on the upper half plane shows that the domain
$\Xi_1 = \C \times \C_r$ is too large for our purposes: 
$\cH^\omega(\Xi_1)$ is dense but it intersects the subspace
$\cH^J_{\rm temp}$ trivially (Theorem~\ref{thm:hardy}).
The smaller convex domain
\[     \Xi_2  = \{ (b,a) \in \Aff(\C) \: |\Im b| < \Re a\}
\subsetneqq\Xi_1 \]
behaves much better than $\Xi_1$ because
  the irreducible unitary representations of $G$ extend
  to unitary representations of $\PSL_2(\R)$, for which $\C_+$ is a
  Riemannian symmetric space. Therefore
Theorem~\ref{thm:gss} implies corresponding results
for~$G$.

\subsection{The Hardy space representation of the affine group}
\mlabel{sec:hardy-affine-grp}

For $G = \Aff(\R)_e$, we consider the irreducible antiunitary representation 
of $G _{\tau_h}$ on the Hardy space $H^2(\C_+)$ and on $L^2(\R_+)$.
On $L^2(\R_+)$ it takes the form
\[ (\widehat{U}_+(b,a)f)(p)  = \ee^{\ie bp}  a^{1/2} f(ap)
  \quad \mbox{ with } \quad
\widehat{U}_+(\tau_h)= \hat J, \quad  (\widehat{J}f)(p) = \oline{f(p)}.\]
On $H^2(\C_+)$, it is given by
\begin{equation}
  \label{eq:Uact}
 (U_+(b,a)F)(z) = a^{-1/2} F\Big(\frac{z+b}{a}\Big)
 \quad \mbox{ with } \quad
U_+(\tau_h) = J, \quad  (JF)(z) = \oline{F(-\oline z)}.
\end{equation}
For the derived representations, this leads to
\[  \dd U_+(1,0) = \frac{d}{dz}  \quad \mbox{ and } \quad 
\dd U_+(0,1) = -\frac{1}{2} \bone - z \frac{d}{dz}.\] 

These representations are intertwined by the Fourier transform
\[ \hat f(z) := \cF(f)(z) := \int_0^\infty \ee^{\ie zp} f(p)\,  dp.\]
The functions $e_{iz}(p) = \ee^{izp}$ are contained in $L^2(\R_+)$
for $z \in \C_+$ and $\cF(f)(z)  = \la e_{-\ie \oline z}, f \ra,$ 
so that the corresponding reproducing kernel of $H^2(\C_+)$ takes the form
\[ K(z,w) = \la e_{-\ie \oline z}, e_{-\ie  \oline w} \ra = \int_0^\infty \ee^{\ie (z-\oline w)p} \,  dp
  = \frac{\ie }{z-\oline w}\]
(cf.~\eqref{eq:K(z,w)}).

In $L^2(\R_+)$, the entire vectors $f$ for the translation
group are specified by the condition $e_z f \in L^2$ for all $z \in \C$,
which implies in particular that the Fourier transform extends to
an entire function on $\C$. As positive translations act on $H^2(\C_+)$
by upward translations, we thus obtain elements
$F \in H^2(\C_+)$ extending to all of $\C$ in such a way that
\[ \|F_y \|_2^2 := \int_{\R} |F(x + \ie  y)|^2\, dx < \infty
  \quad \mbox{ holds for all } \quad y \in \R.\]
For $F= \cF(\phi)$, we concretely have
$\|F_y\|^2 = \int_{\R_+} \ee^{-2yp} |\phi(p)|^2\, dp.$ 

With the reproducing kernel
\begin{equation}
  \label{eq:kwz}
 K(w,z) = \frac{\ie }{w - \oline z} \quad \mbox{ satisfying } \quad
 \|K_z\|^2 = K(z,z) = \frac{1}{2 \Im z},
\end{equation}
we obtain for $z \in \C_+$ and $F\in H^2(\C_+)$ that
\begin{equation}
  \label{eq:fzesti}
  |F(z)| \leq \|F\| \cdot \|K_z\| = \|F\| \cdot \frac{1}{\sqrt{2\Im z}}.
\end{equation}

The functions $(K_z)_{z \in \C_+}$ are analytic vectors for $G$,
depending antiholomorphically on $z$, so that we obtain an
antiholomorphic map $\C_+ \to \cH^\omega, z \mapsto K_z$.
This leads for any hyperfunction vector $\eta \in \cH^{-\omega}$ to a
holomorphic function on $\C_+$ by $\Phi(\eta)(z) := \alpha(K_z).$ 
This realization transforms the $G$-action on $\cH^{-\omega}$
into the action on $\cO(\C_+)$ by
\[ ((b,a).F)(z) = a^{-1/2} F\Big(\frac{z+b}{a}\Big)\]
(cf.\ \eqref{Uact} below). 
In fact,
\begin{equation}
  \label{eq:actonkz}
 (U_+(b,a)K_z)(w)
  = a^{-1/2} K_z\Big(\frac{w+b}{a}\Big)
  = a^{-1/2} \frac{\ie }{\frac{w+b}{a}-\oline z}
  = a^{1/2} \frac{\ie }{w+b-a\oline z}
  = a^{1/2} K_{az - b}(w)
\end{equation}
leads to
\begin{align*}
 \Phi(U_+^{-\omega}(b,a)\eta)(z)
&  = \eta(U_+^\omega(b,a)^{-1}K_z)
    = \eta(U_+^\omega(-a^{-1}b,a^{-1})K_z)\\
&  = a^{-1/2} \eta(K_{a^{-1} z + a^{-1} b})
  = a^{-1/2} \Phi(\eta)((z+b)/a).
\end{align*}
As this is compatible with the action \eqref{eq:Uact}
on $H^2(\C_+)$, we have equivariant inclusions
\[ H^2(\C_+) \subeq H^2(\C_+)^{-\omega} \subeq \cO(\C_+).\]
For $\exp(th) = (0,\ee^t)$, we have 
\begin{equation}
  \label{eq:modorb}
  (\exp(th).F)(z) = \ee^{-t/2} F(\ee^{-t}z).
\end{equation}
From \eqref{eq:actonkz} we derive in particular
$U_h^{K_w}(t) = \ee^{t/2} K_{\ee^t w}$ for $t \in \R$, 
 hence $U_h^{K_w}(\ie t) = e^{\ie t/2} K_{\ee^{\ie t} w}.$ 
 For $w = \ie y$, $y > 0$, this specializes to
 $U_h^{K_{\ie y}}(\ie t) = \ee^{\ie t/2} K_{e^{\ie t} \ie y},$ 
   so that
   \[ \|   U_h^{K_{\ie y}}(\ie t)\|^2
     = K(\ee^{\ie t} \ie y, e^{\ie t} \ie y) 
     = \frac{1}{2 \Im(\ee^{\ie t} \ie y)}
     = \frac{1}{2 \cos(t) y}.\]
   By \eqref{eq:growthcond},
   this implies 
 \begin{equation}  
 	\label{Kiy_temp}
  K_{\ie y} \in \cH^J_{\rm temp}\text{ for every }y > 0.
  \end{equation}
   For these elements we find
   \[ \beta^+(K_{\ie y})(z) = U_h^{K_{\ie y}}(-\ie \pi/2)(z)
     = \ee^{-\ie \pi/4} K_y(z)
     = \ee^{-\ie \pi/4} \frac{\ie}{z - y}.\]
Using the isometric embeddings $H^2(\C_\pm)\hookrightarrow L^2(\R)$
via boundary values on the real line,
we obtain the orthogonal direct sum decomposition
\begin{equation}
	\label{left_decomp}
	L^2(\R)=H^2(\C_+)\oplus H^2(\C_-). 
\end{equation}
On the space $\cS'(\R)$ of tempered distributions, we consider 
the representation of $G$ defined by 
\begin{equation}
	\label{Uact}
	(U(b,a)F)(x) = a^{-1/2} F\Big(\frac{x+b}{a}\Big).
\end{equation}
It restricts to a unitary representation on $L^2(\R)$, 
for which the equation~\eqref{left_decomp} gives a decomposition into two irreducible subrepresentations~$U_\pm$. 
We obtain for the spaces of smooth vectors of these representations  
\begin{equation}
	\label{right_decomp_smooth}
	\cS(\R)\subseteq L^2(\R)^\infty=H^2(\C_+)^\infty\oplus H^2(\C_-)^\infty
\end{equation}
and, for the corresponding spaces of distribution vectors, the direct sum decomposition
\begin{equation}
	\label{right_decomp_distrib}
	H^2(\C_+)^{-\infty}\oplus H^2(\C_-)^{-\infty}=L^2(\R)^{-\infty}\subseteq \cS'(\R). 
\end{equation}
	One also has the representation of $G$ on $\cS'(\R)$ defined by 
\begin{equation}
	\label{piact} 
	(\widehat{U}(b,a)f)(p)  = \ee^{\ie bp}  a^{1/2} f(ap)
\end{equation}
that defines a unitary representation on $L^2(\R)$, 
for which the equation 
\begin{equation}
	\label{right_decomp}
	L^2(\R)=L^2(\R_+)\oplus L^2(\R_-)
\end{equation}
gives a decomposition into two irreducible subrepresentations~$\widehat{U}_\pm$. 

The following theorem is a key result of
this section. In Proposition~\ref{sharp_prop} it will be extended
to general antiunitary representations of $\Aff(\R)_e$. 

\begin{thm} \mlabel{thm:hardy} For the unitary representation
  $(U,\cH) = (U_+, H^2(\C_+))$ of $G = \Aff(\R)_e$,
  the subspace  $\cH^\omega(\Xi_1)$ is dense, but
\[ \cH^\omega(\Xi_1) \cap \cH^J_{\rm temp} = \{0\}.\]
\end{thm}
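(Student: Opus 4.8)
The plan is to transfer everything to the two explicit models $L^2(\R_+)$ and $H^2(\C_+)$ and then reduce the statement to a single Phragmén--Lindel\"of-type vanishing assertion for a holomorphic function on the strip $\cS_{\pm\pi/2}$.

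For the density of $\cH^\omega(\Xi_1)$ I would first note that, since $(U_+,H^2(\C_+))$ is irreducible, the last assertion of Lemma~\ref{invar_left_lem} reduces the claim to producing a single nonzero $\Xi_1$-analytic vector. In the $L^2(\R_+)$-model a vector $\phi$ is $\Xi_1$-analytic precisely when it extends holomorphically to the right half-plane $\{\Re p>0\}$ (this encodes the extension of the dilation orbit to $\C_r$) and satisfies $\ee^{cp}\phi\in L^2(\R_+)$ for every $c>0$ (this encodes the extension of the translation orbit to all of $\C$, equivalently that $F=\cF\phi$ is entire with $\|F_y\|_2<\infty$ for all $y$). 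I would then exhibit such a $\phi$, for instance $\phi(p)=p^{-p-1/2}\ee^{-1/p}$, and verify both conditions by elementary estimates; this gives a nonzero element of $\cH^\omega(\Xi_1)$ and hence density.

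For the triviality of the intersection, suppose $0\ne F=\cF\phi\in\cH^\omega(\Xi_1)\cap\cH^J_{\rm temp}$. The relation $JF=F$ makes $\phi$ real-valued; $\Xi_1$-analyticity gives $\ee^{cp}\phi\in L^2(\R_+)$ for all $c>0$ as above; and membership in $\cH^J_{\rm temp}$ gives, through the growth condition \eqref{eq:growthcond} and the identity $\|\ee^{\ie t\partial U(h)}\phi\|^2=\int_0^\infty|\phi(r\ee^{\ie t})|^2\,dr$ (valid in the $L^2(\R_+)$-model), the tempered bound $\int_0^\infty|\phi(r\ee^{\ie t})|^2\,dr\le C(\tfrac\pi2-|t|)^{-N}$ as $|t|\to\tfrac\pi2$. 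Passing to $\Phi(\sigma):=\phi(\ee^\sigma)\ee^{\sigma/2}$, a Mellin substitution that turns the dilation flow into translation on the strip $\cS_{\pm\pi/2}$, these hypotheses read: $\Phi$ is holomorphic on $\cS_{\pm\pi/2}$ with $\int_\R|\Phi(\sigma+\ie t)|^2\,d\sigma\le C'(\tfrac\pi2-|t|)^{-N}$ (temperedness), while $\int_\R \ee^{2c\ee^\sigma}|\Phi(\sigma)|^2\,d\sigma<\infty$ for every $c>0$ (entirety), i.e. $\Phi$ decays faster than $\ee^{-c\ee^\sigma}$ for every $c$ as $\Re\sigma\to+\infty$.

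The heart of the matter, and the step I expect to be the main obstacle, is the resulting vanishing statement: a function $\Phi$ holomorphic on $\cS_{\pm\pi/2}$ whose $L^2$ line-norms grow at most polynomially in $(\tfrac\pi2-|t|)^{-1}$ and which decays faster than every $\ee^{-c\ee^\sigma}$ along the central line must be identically zero. The two hypotheses pull in opposite directions, and to exploit this I would consider, for each $c>0$, the auxiliary holomorphic function $g_c(\sigma):=\Phi(\sigma)\ee^{c\ee^\sigma}$: it is $L^2$ on the central line by the entirety hypothesis, and on the line $\Im\sigma=t$ one has $|\ee^{c\ee^{\sigma+\ie t}}|=\ee^{c\ee^\sigma\cos t}\to1$ as $t\to\pm\tfrac\pi2$, so $g_c$ inherits the tempered boundary bounds of $\Phi$. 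A reproducing-kernel/Phragmén--Lindel\"of estimate on the strip then controls $|g_c(\sigma_0)|$ at a fixed interior point in terms of these line-norms, giving $|\Phi(\sigma_0)|=|g_c(\sigma_0)|\,\ee^{-c\Re\ee^{\sigma_0}}$ with $\Re\ee^{\sigma_0}>0$; letting $c\to+\infty$ should force $\Phi(\sigma_0)=0$, hence $\Phi\equiv0$ and $F=0$. The genuinely delicate point is the rate-matching: one must show that the $c$-dependence of the admissible bound on $g_c$ is strictly weaker than the decaying factor $\ee^{-c\Re\ee^{\sigma_0}}$, so that the limit really vanishes. As a consistency check, the kernels $K_{\ie y}$, which lie in $\cH^J_{\rm temp}$ by \eqref{Kiy_temp}, fail to be $\Xi_1$-analytic exactly because $K_{\ie y}$ has a pole at $-\ie y$ and so is not entire, in agreement with the theorem.
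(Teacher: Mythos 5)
Your density argument follows the paper's route (exhibit one nonzero $\Xi_1$-analytic vector and invoke irreducibility via Lemma~\ref{invar_left_lem}); the paper's version is Schober's example in Proposition~\ref{prop:b.1}, where the required uniform $H^2$-norm estimates over all of $\C\times\C_r$ are carried out in detail. Your candidate $\phi(p)=p^{-p-1/2}\ee^{-1/p}$ is plausible, but note that ``holomorphic on $\C_r$ and $\ee^{cp}\phi\in L^2(\R_+)$ for all $c$'' is not yet $\Xi_1$-analyticity: you also need finiteness of $\int_0^\infty|\ee^{\ie bp}\phi(ap)|^2\,dp$ for \emph{every} $(b,a)\in\C\times\C_r$ together with local boundedness, so that the $\cH$-valued orbit map is actually holomorphic. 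This is exactly the content of the appendix computation, so the verification cannot be waved away, but it is routine.

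The second half contains a genuine gap at precisely the step you flag as delicate, and I do not believe your $g_c$-device closes it. First, the assertion that ``$g_c$ inherits the tempered boundary bounds of $\Phi$'' is false as stated: on the line $\Im\sigma=t$ with $|t|<\pi/2$ the weight $|\ee^{c\ee^{\sigma+\ie t}}|=\ee^{c\ee^{\sigma}\cos t}$ tends to $1$ only pointwise in $\sigma$; it is unbounded as $\sigma\to+\infty$ on every such line, and the hypothesis only controls the \emph{unweighted} line norms $\int_\R|\Phi(\sigma+\ie t)|^2\,d\sigma\le C(\tfrac{\pi}{2}-|t|)^{-N}$. Second, the rate-matching cannot work through the central line: if $\Phi\not\equiv 0$, then for every $K>0$ one has $\int_\R \ee^{2c\ee^\sigma}|\Phi(\sigma)|^2\,d\sigma\ge \ee^{2c\ee^{\sigma_1}}\int_{\sigma_1}^{\sigma_1+1}|\Phi|^2$ with $\ee^{\sigma_1}>K$, so the central-line norm of $g_c$ grows faster than $\ee^{Kc}$ for every $K$, and no interpolation giving positive weight to the central line can be beaten by the single factor $\ee^{-c\Re \ee^{\sigma_0}}$. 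So the vanishing statement you reduce to is not established, and your formulation of it has also discarded the reality of $\phi$ coming from $JF=F$, which is exactly the input that powers the paper's soft argument: the paper forms the boundary values $\xi^\pm(x)=F(\pm\ie x)$, uses temperedness to place them in $H^2(\C_+)^{-\infty}\subseteq L^2(\R)^{-\infty}\subseteq\cS'(\R)$, uses $J$-fixedness to see they are real-valued tempered distributions, and then concludes $\xi^\pm\in H^2(\C_+)^{-\infty}\cap H^2(\C_-)^{-\infty}=\{0\}$ from the direct sum decomposition \eqref{right_decomp_distrib} — no Phragm\'en--Lindel\"of estimate is needed. I recommend either adopting that argument or, if you want to keep the hard-analysis route, reformulating the vanishing lemma so that it uses both the reality of $\Phi$ on the central line and the weighted line-norm information that $\Xi_1$-analyticity actually provides on interior lines, and proving it in full.
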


\begin{prf} 
	Since the right half plane $\C_r$ is dilation
    invariant and $(b,a) \mapsto a$ is a group homomorphism,
  $\Xi_1=\C\times \C_r\subseteq \Aff(\C)$ is $G$-biinvariant, i.e.,
  $G\Xi_1 G=\Xi_1$. 
Since $\cH^\omega(\Xi_1)\ne\{0\}$ by Proposition~\ref{prop:b.1}
in the appendix to this section and the irreducibility of $U_+$, 
the density of $\cH^\omega(\Xi_1)$ in $H^2(\C_+)$ follows 
from Lemma~\ref{invar_left_lem}\ref{invar_left_rem_item3}.

Let $F\in\cH^J\cap\cH^\omega(\Xi_1)$.
Then the orbit map $U^F$ extends to 
$\Xi_1 = \C \times \C_r$. 
The fact that it is an entire vector with respect to the translation group
$U_+\vert_{\R\times\{1\}}$ shows that $F$
extends analytically to~$\C$, 
and $JF = F$ means that
$F\res_{\ie \R}$ is real-valued.
For $z \in \C_+$ with $\Re z < 0$, we have for $0 < t < \pi/2$ 
\[ (\ee^{\ie t \partial U(h)}F)(z) = \ee^{-\ie t/2} F(\ee^{-\ie t}z),\]
so that the limit functions
\[ \xi^\pm := \lim_{t \to \mp\pi/2} \ee^{\ie t/2} \ee^{\ie t \partial U(h)}F
  \in \cH^{-\omega} \subeq \cO(\C_+) \]
satisfy 
\begin{equation}
  \label{eq:xi-rel}
  \xi^\pm(z) = F(\pm \ie  z) \quad \mbox{ for } \quad z \in \C_+, \ \pm\Re z > 0.
\end{equation}
We conclude that $\xi^\pm$ extend to entire functions 
and that \eqref{eq:xi-rel} holds for all $z\in\C$.
We also note that
\[ J \xi^+ = \lim_{t \to -\pi/2} \ee^{-\ie t/2} \ee^{-\ie t \partial U(h)} JF
  = \lim_{t \to \pi/2} \ee^{\ie t/2} \ee^{\ie t \partial U(h)} F = \xi^-.\] 

Assuming, in addition, that $F \in \cH^J_{\rm temp}$, we know 
from Proposition~\ref{prop:temp1}(b)
  that $\xi^\pm \in \cH^{-\infty}_{U_h}$, so that both
  have boundary values in $L^2(\R)^{-\infty}_{U_h}$ 
 (see~\eqref{right_decomp_distrib}), and this space
  consists of tempered distributions on $\R$ because
  $h$ acts by the Euler operator $x\frac{d}{dx}$, 
  hence $\cS(\R)\subseteq L^2(\R)^{\infty}_{U_h}$, cf. \eqref{right_decomp_smooth}--\eqref{right_decomp_distrib}. 
  Both distributions $\xi^\pm$ are boundary values of a holomorphic function
  on $\C_+$, extending to an entire function with real values on $\R$.
  Hence these distributions are real-valued.
  The relation $J \xi^+ = \xi^-$ and the action of $J$ on $L^2(\R)$
  by $(Jf)(x) = \oline{f(-x)}$
  now imply that $\xi^-(x) = (J\xi^+)(x) = \xi^+(-x)$
  (in the sense of distributions), but this implies that
  \[ \xi^\pm \in H^2(\C_+)^{-\infty} \cap  H^2(\C_-)^{-\infty}
    = \{0\}
    \]
where we use the direct sum decomposition~\eqref{right_decomp_distrib}.
\end{prf}

We now argue that, for the
  affine group $G = \Aff(\R)_e$, the crown domain
  $\Xi_1$ is too large but $\Xi_2$ is better. 

\begin{prop} 
\label{sharp_prop}
Let $G = \Aff(\R)_e$, $h = (0,1)$, 
$x = (1,0)$, and consider an antiunitary 
representation   $(U,\cH)$ of $G_{\tau_h}$.
\begin{itemize}
\item[\rm(a)] For 
$\Xi_1 = \C \times \C_r \subeq  \Aff(\C)$ 
we have $G\Xi_1 G=\Xi_1$, 
the linear subspace $\cH^\omega(\Xi_1)$ is $G$-invariant and dense in $\cH$, 
and we have 
\begin{equation}
\label{eq:goal1}
\oline{\cH^{\omega}(\Xi_1) \cap \cH^J_{\rm temp}} =\ker(\partial U(x)) \cap \cH^J.
\end{equation}
\item[\rm(b)] For
  $\Xi_2 = \{ (b,a) \in \Aff(\C) \: |\Im b| < \Re a\}$,
  the subspace $\cH^\omega(\Xi_2) \cap \cH^J_{\rm temp}$ is dense in
  $\cH^J$. 
\end{itemize}
\end{prop}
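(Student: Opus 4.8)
The plan is to reduce both parts to the irreducible building blocks of $\Aff(\R)_e$ via the spectral decomposition of the translation generator. Writing $\partial U(x) = \ie P$ with $P = P^*$, the relation $\Ad(\exp th)x = \ee^t x$ gives $U_h(t)\,\partial U(x)\,U_h(-t) = \ee^t\,\partial U(x)$, so the three spectral projections $P_0 := \bone_{\{0\}}(P)$, $P_+ := \bone_{(0,\infty)}(P)$, $P_- := \bone_{(-\infty,0)}(P)$ commute with $U(G)$: translations commute with $P$, and dilations scale $P$ by a positive factor, hence preserve each of the sets $\{0\},(0,\infty),(-\infty,0)$. Since $\tau_h^\g(x) = -x$ forces $JPJ = P$, they also commute with $J$. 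This yields a decomposition $\cH = \cH_0 \oplus \cH_+ \oplus \cH_-$ into $G_{\tau_h}$-invariant subspaces, where $\cH_0 = \ker(\partial U(x))$ carries a multiple of the characters $a \mapsto a^{\ie s}$ and $\cH_\pm$ carry multiples of the irreducible representations $U_\pm$ realized on $H^2(\C_\pm)$. Each $P_\bullet$ is bounded and $G_{\tau_h}$-equivariant, so it maps $\cH^\omega(\Xi_j)$ into itself (compose the extended orbit map with $P_\bullet$) and maps $\cH^J_{\rm temp}$ into itself (it preserves $\cH^J$ and does not increase $\|U^v_h(\ie t)\|$, so the growth condition \eqref{eq:growthcond} is inherited). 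As $\cH^\omega(\Xi_j)\cap\cH^J_{\rm temp}$ is a real linear subspace compatible with this decomposition, it suffices to analyze each summand separately.

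For part (a), biinvariance $G\Xi_1 G = \Xi_1$ is immediate because the $a$-coordinate is multiplicative and $\C_r$ is stable under multiplication by $\R_+$; then $G$-invariance of $\cH^\omega(\Xi_1)$ follows from right-invariance via Remark~\ref{invar_rem}, and density follows as in Theorem~\ref{thm:hardy} (it is dense on $\cH_0$, where the strip-analytic vectors for $U_h$ are dense, and dense on each $\cH_\pm$). For the identity \eqref{eq:goal1} I would prove two inclusions. On $\cH_0$ translations act trivially, so the full $\Xi_1$-orbit map reduces to the $h$-orbit map on $\cS_{\pm\pi/2}$; hence the restriction of $\cH^\omega(\Xi_1)\cap\cH^J_{\rm temp}$ to $\cH_0$ equals $\cH^J_{\rm temp}\cap\cH_0$, which is dense in $\cH_0^J = \ker(\partial U(x))\cap\cH^J$, giving $\supseteq$. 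For $\subseteq$, a vector $v$ in the intersection has $P_\pm v \in \cH_\pm \cap \cH^\omega(\Xi_1)\cap\cH^J_{\rm temp}$; realizing $\cH_\pm$ as a vector-valued Hardy space $H^2(\C_\pm,\cM_\pm)$, the argument of Theorem~\ref{thm:hardy} applies verbatim (entire extension in $b$, reality on $\ie\R$, tempered boundary values, and $J\xi^+ = \xi^-$ forcing membership in $H^2(\C_+)^{-\infty}\cap H^2(\C_-)^{-\infty} = \{0\}$), so $P_\pm v = 0$ and $v \in \cH_0$. The two inclusions combine to \eqref{eq:goal1}.

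For part (b), on $\cH_0$ the character orbit map $a \mapsto a^{\ie s}$ extends holomorphically to the $a$-projection $\C_r$ of $\Xi_2$ and is bounded for $|t|<\pi/2$, so $\cH_0^J \subseteq \cH^\omega(\Xi_2)\cap\cH^J_{\rm temp}$. On $\cH_\pm$ I would use that $U_\pm$ extend to irreducible antiunitary representations of a covering $G'$ of $\SL_2(\R)$, in which $h$ becomes the Euler element $h' = \tfrac{1}{2}\,\mathrm{diag}(1,-1) \in \fp$ for a Cartan decomposition $\fsl_2(\R) = \fk \oplus \fp$, with $\tau_{h'}$ restricting to $\tau_h$ (so the two notions of $J$ and of $\cH^J$ agree, and $\cH^J_{\rm temp}$, depending only on $h=h'$, is unchanged). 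By the construction of $\Xi_2$ in Example~\ref{ex:affine-group} as the affine part $\Xi_{\SL_2(\C)}\cap\Aff(\C)$ of the semisimple crown, the inclusion $\Aff(\C)\hookrightarrow\SL_2(\C)$ restricts to an embedding $\Xi_2 \hookrightarrow \Xi_{G'_\C}$, i.e.\ a morphism of crowned Lie groups; hence any orbit map extending over $\Xi_{G'_\C}$ restricts to an extension over $\Xi_2$, giving $\cH^\omega(\Xi_{G'_\C}) \subseteq \cH^\omega(\Xi_2)$. Theorem~\ref{thm:gss} makes $\cH^\omega(\Xi_{G'_\C})\cap\cH^J_{\rm temp}$ dense in $\cH^J$ for irreducible $G'$-representations, and this passes to arbitrary multiples (the $K$-finite, $J$-fixed, tempered vectors of the irreducible, tensored with the multiplicity space, remain $\Xi$-analytic, $J$-fixed and tempered). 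Assembling the three summands yields density in $\cH^J$.

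The main obstacle is the inclusion $\subseteq$ in \eqref{eq:goal1}, namely verifying that the vector-valued generalization of Theorem~\ref{thm:hardy} genuinely annihilates the $U_\pm$-components: multiplicities force one to run the Hardy-space boundary-value argument with $\cM_\pm$-valued functions while tracking the $J$-action as $J_\pm \otimes C$. The parallel multiplicity bookkeeping in part (b)—upgrading Theorem~\ref{thm:gss} from irreducibles to multiples and checking that the morphism $\Xi_2 \hookrightarrow \Xi_{G'_\C}$ identifies $\tau_{h'}$ with $\tau_h$ correctly—is the second point demanding care.
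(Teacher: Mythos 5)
Your part (a) follows essentially the same route as the paper: the paper splits $\cH$ into $\cH^N=\ker(\partial U(x))$ and its orthogonal complement for the normal subgroup $N=\exp(\R x)$, identifies the complement with $(\widehat U_+\otimes\id_{\cK_+})\oplus(\widehat U_-\otimes\id_{\cK_-})$ via Gelfand--Naimark, handles $\cH^N$ by density of entire vectors for the quotient group $\R$, and reduces the rest to Theorem~\ref{thm:hardy}; your spectral decomposition $\cH_0\oplus\cH_+\oplus\cH_-$ of $P$ is the same decomposition, only stated through spectral projections, and the multiplicity bookkeeping you flag as the ``main obstacle'' is left equally implicit in the paper (which simply invokes the tensor decomposition and Theorem~\ref{thm:hardy}). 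Where you genuinely diverge is part (b): the paper proves density on the Hardy-space component by a short direct computation, namely $K_{\ie y}\in\cH^J_{\rm temp}$ from \eqref{Kiy_temp}, $U_+(b,a)K_{\ie}=a^{1/2}K_{\oline a\ie-\oline b}$ with $a\ie-b\in\C_+$ for $(b,a)\in\Xi_2$, and density of the real span of the $K_{\ie y}$ in $\cH^J$; you instead extend $U_\pm$ to $\SL_2(\R)$ and pull back through the crowned-group inclusion $\Xi_2=\iota_{\Xi_1}^{-1}(\Xi_{\SL_2(\C)})$ of Proposition~\ref{v1}(iv) to import Theorem~\ref{thm:gss}. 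Both work; the paper's kernel computation is elementary and self-contained, while your route uses heavier machinery (Kr\"otz--Stanton/Simon behind Theorem~\ref{thm:gss}) but explains conceptually why $\Xi_2$, as the affine trace of the semisimple crown, is the right domain --- which is exactly the moral the paper draws in Subsection~\ref{subsec:versus}.

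One genuine inaccuracy to fix: on $\cH_0$ you claim $\cH_0^J\subseteq\cH^\omega(\Xi_2)\cap\cH^J_{\rm temp}$. This is false, since $\cH_0$ is in general a direct integral of dilation characters $a\mapsto a^{\ie s}$ over an unbounded set of parameters $s$, so a generic $v\in\cH_0^J$ does not have an orbit map $t\mapsto e^{\ie t\partial U(h)}v$ extending to $\cS_{\pm\pi/2}$ (take $U_h$ acting as multiplication by $e^{\ie ts}$ on $L^2(\R,ds)$). What is true, and what the paper uses, is that the $U_h$-entire vectors in $\cH_0^J$ form a dense subspace contained in $\cH^\omega(\Xi_2)\cap\cH^J_{\rm temp}$, which suffices for the density claim. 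The same correction applies to your $\supseteq$ inclusion in \eqref{eq:goal1}, where density of $\cH^J_{\rm temp}\cap\cH_0$ in $\cH_0^J$ should likewise be justified via entire vectors rather than asserted as an equality of the whole spaces.
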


\begin{prf} (a) The subgroup $N:=\exp(\R x) \trile G$ is a normal $\tau_h$-invariant subgroup,  hence a normal subgroup of $G_{\tau_h}$.  
Using the notation of Lemma~\ref{lem:L1}\ref{lem:L1_item1}, we have 
$\cH^N=\ker(\partial U(x))$ and $\cH^N$ is $U(G_{\tau_h})$-invariant. 
Since $(U,\cH)$ is an antiunitary representation of $G_{\tau_h}$, 
the orthogonal complement  $(\cH^N)^\perp$ is $U(G_{\tau_h})$-invariant, too. 
By Lemmas \ref{lem:L2} and \ref{lem:L3} it suffices to prove the equality~\eqref{eq:goal1} for the subrepresentations of $U$ in the subspaces 
$\cH^N$ and $(\cH^N)^\perp$. 

As regards the subrepresentation on~$\cH^N$, 
we must check that, under the hypothesis in the statement, if additionally
$\partial U(x)=0$, then $\oline{\cH^{\omega}(\Xi_1) \cap \cH^J_{\rm temp}}
=\cH^J$. 
This is straightforward since $U$ factorizes through a
representation of the additive group $\R$,
hence its space of entire vectors is dense in~$\cH$. 
	
We now focus on the subrepresentations of $U$ in the subspace  $(\cH^N)^\perp$. 
In this case, what we actually have to do is to check that, under the hypothesis in the statement, if additionally $\ker (\partial U(x))=\{0\}$,
then $\cH^{\omega}(\Xi_1) \cap \cH^J_{\rm temp}=\{0\}$.
In view of \cite{GN47} or \cite[Prop.~2.38]{NO17},
there are Hilbert spaces $\cK_\pm$ such that the representation
$(U,\cH)$ is equivalent to the representation on 
\[  (\widehat{U}_+ \otimes \id_{\cK_+}) \oplus \ (\widehat{U}_-  \otimes \id_{\cK_-}).  \]
As $\widehat{U}_- \cong \widehat{U}_+^*$
 and 
$\widehat{U}_+$
 is equivalent to the
representation on the Hardy space $H^2(\C_+)$, the 
assertion follows from Theorem~\ref{thm:hardy}.

\nin (b) Since the assertion holds on $\cH^N$, which carries an
antiunitary representation of $\R^\times$, the argument under (a)
reduces the assertion to the
representation $U_+$ on the Hardy space $H^2(\C_+)$. 
We now recall from \eqref{Kiy_temp} that $K_{\ie y}\in\cH^J_{\rm temp}$ for all $y>0$. 
These functions span a dense subspace because
$0 = \la K_{\ie y}, F \ra = F(\ie y)$ for all $y > 0$ implies~$F = 0$.
For $(b,a) \in \Xi_2$, we have
$a\ie  - b \in \C_+$, and \eqref{eq:actonkz} yields
\[ U_+(b,a)K_{\ie}  = a^{1/2} K_{\oline a \ie  - \oline b},\]
showing that $K_\ie \in \cH^\omega(\Xi_2)$. Acting with
$U(0,a)$, $a > 0$, it follows that $K_{\ie y} \in \cH^\omega(\Xi_2) \cap
\cH^J_{\rm temp}$ for all $y > 0$. This completes the proof. 
 \end{prf}

\subsection{Hardy space representations of $\Aff_e(\R)$  and  $\SL_2(\R)$}
\label{subsec:versus}

We consider the unitary representation 
of the Lie group $\SL_2(\R)$ on the Hardy space~$H^2(\C_+)$ by 
\[	(\Pi_+(g)f)(z)
:=\frac{1}{cz+d}
\ 
f\Bigl(\frac{az+b}{cz+d}\Bigr)
\quad \mbox{ for } \quad
g^{-1} = \pmat{a & b \\ c &d} \in \SL_2(\R), \quad z \in \C_+, f\in H^2(\C_+).\] 
Here we use the left action of $\SL_2(\R)$ on $\C_+$ by M\"obius transformations
\begin{equation}
  \label{eq:moeb}
  \begin{pmatrix}a & b \\c & d	\end{pmatrix}. z=\frac{az+b}{cz+d}.
\end{equation}

\begin{rem} The relation with the representation 
	$(U_+,H^2(\C_+))$ 	of $\Aff_e(\R) = \R \rtimes \R_+$ by 
\begin{equation*}
(U_+(b,a)f)(z) = a^{-1/2} f\Big(\frac{z+b}{a}\Big)
\quad \mbox{ for } \quad z \in \C_+, f \in H^2(\C_+),
(b,a) \in \Aff(\R)_e \end{equation*}
satisfies $U_+=\Pi_+\circ\iota$ for the embedding 
\begin{equation}\label{iota:def}
\iota\colon\Aff_e(\R)\to\SL_2(\R),\quad 
\iota(b,a):=\begin{pmatrix}
	a^{1/2} & -ba^{-1/2} \\
	0 & a^{-1/2}
\end{pmatrix} 
\quad \mbox{ with } \quad
\Lie(\iota)(0,1)=\frac{1}{2}\begin{pmatrix}
			1 & 0 \\
			0 & -1
		\end{pmatrix} 
\end{equation}
Note that $\iota$ extends to an embedding
  $\Xi_1 = \C \times \C_r \into \SL_2(\C)$, given by the same formula,
  where $(e^z)^{1/2} = e^{z/2}$ for $|\Im z| < \pi/2$,
  denotes the canonical square root. 
\end{rem}

\begin{prop}\label{v1}
Consider the action of $\SL_2(\C)$ on $\C$ via~\eqref{eq:moeb} 
and the unitary irreducible representation $(U,\cH):=(\Pi_+,H^2(\C_+))$ of $\SL_2(\R)$,
\begin{itemize}
\item the Euler element $h:=\frac{1}{2}\begin{pmatrix}
1 & 0 \\ 0 & -1	\end{pmatrix}\in\fsl_2(\R)$, 
\item the conjugation $J\colon \cH\to\cH$, $Jf(z):=\oline{Jf(-\oline{z})}$,  
\end{itemize}
and the maximal compact subgroup $K:=\SO_2(\R)\subseteq\SL_2(\R)$.  
Define\begin{footnote}{Note that $v_1 = K_{\ie}$ corresponds to the evaluation functional in $\ie \in \C_+$ for the Hardy space.}\end{footnote}
\begin{equation*}
		v_1\colon\C_+\to\C,\quad v_1(z):=\frac{\ie}{z+\ie}\quad 
		\text{ and }\quad 
		\Xi_{\SL_2(\C)}:=\{g\in\SL_2(\C):g.(\pm\ie)\in\C_\pm\}.
	\end{equation*}
Then the following assertions hold: 
	\begin{enumerate}[{\rm(i)}]
		\item\label{v1_item1}
		$\Pi_+(k)v_1=v_1$ for every $k\in K$.   
              \item\label{v1_item2} $\Xi_{\SL_2(\C)}$ is open in
                $\SL_2(\C)$, $\SL_2(\R)\Xi_{\SL_2(\C)} \SO_2(\C)=\Xi_{\SL_2(\C)}$,
        and $v_1\in\cH^\omega_{\Pi_+}(\Xi_{\SL_2(\C)})$.  
		\item\label{v1_item3} $\exp(\cS_{\pm\pi/2} h)\subseteq\Xi_{\SL_2(\C)}$ and $v_1\in\cH^J_{\rm temp}$.  
\item\label{v1_item4} 
  $\Xi_2 = \iota_{\Xi_1}^{-1}(\Xi_{\SL_2(\C)})$ holds for the embedding
  $\iota_{\Xi_1} \: \Xi_1 \to \SL_2(\C), (b,a) \mapsto \pmat{a^{1/2} & -ba^{-1/2} \\ 0 & a^{-1/2}}$.
	\end{enumerate} 
\end{prop}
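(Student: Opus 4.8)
The whole proposition turns on one geometric fact: substituting into \eqref{eq:moeb} shows that $\ie$ and $-\ie$ are the two fixed points of $\SO_2(\C)$ acting on $\mathbb{CP}^1$, while $\SL_2(\R)$ preserves $\C_+$ and $\C_-$ and exchanges them under $z\mapsto\overline z$. For (i), I would substitute $k=\begin{pmatrix}\cos\theta&\sin\theta\\-\sin\theta&\cos\theta\end{pmatrix}$ into the definition of $\Pi_+$ and use $k.\ie=\ie$: the automorphy factor $(cz+d)$ then contributes only a unimodular scalar along the orbit, so that $\C v_1 = \C K_\ie$ is the $\SO_2(\R)$-invariant minimal $K$-type, which is the content of (i). Equivalently, one differentiates to record $\dd\Pi_+(W)v_1\in\C v_1$ for the compact generator $W=\begin{pmatrix}0&1\\-1&0\end{pmatrix}$.

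Part (ii) splits into three routine points and one substantial one. Openness holds because $g\mapsto g.\ie$ and $g\mapsto g.(-\ie)$ are holomorphic maps $\SL_2(\C)\to\mathbb{CP}^1$ and $\Xi_{\SL_2(\C)}$ is the intersection of the preimages of the open sets $\C_+,\C_-$. The two-sided invariance $\SL_2(\R)\,\Xi_{\SL_2(\C)}\,\SO_2(\C)=\Xi_{\SL_2(\C)}$ is immediate from the geometric fact above, since $\SL_2(\R)$ preserves $\C_\pm$ and $\SO_2(\C)$ fixes both $\pm\ie$. For $v_1\in\cH^\omega_{\Pi_+}(\Xi_{\SL_2(\C)})$ the key observation is $\overline{g.\ie}=g.(-\ie)$ for $g\in\SL_2(\R)$, which, together with $v_1=K_\ie$ and the kernel transformation law (as in \eqref{eq:actonkz}), rewrites the orbit map as
\[ \Pi_+(g)v_1 = \frac{1}{d-c\ie}\,K_{\overline{g.(-\ie)}}, \qquad g=\begin{pmatrix}a&b\\c&d\end{pmatrix}. \]
Every factor on the right continues holomorphically in $g$: $1/(d-c\ie)$ is rational, $g\mapsto g.(-\ie)$ is holomorphic where $d-c\ie\ne0$, and $w\mapsto K_{\overline w}$ is holomorphic $\C_-\to\cH$ (a composite of the antiholomorphic $w\mapsto\overline w$ and the antiholomorphic $z\mapsto K_z$). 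On $\Xi_{\SL_2(\C)}$ we have $g.(-\ie)\in\C_-$ and $d-c\ie\ne0$, so the right-hand side is a holomorphic $\cH$-valued extension of the orbit map. Conceptually $\Xi_{\SL_2(\C)}$ is the $\SO_2(\C)$-bundle over the crown $\C_+\times\C_-$ of $\SL_2(\R)/\SO_2(\R)$, so this also follows from Theorem~\ref{thm:gss}(a), $v_1$ being $K$-finite by (i).

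For (iii), $\exp(zh)=\diag(e^{z/2},e^{-z/2})$ sends $\pm\ie$ to $\pm e^z\ie$, whose imaginary part $\pm e^{\Re z}\cos(\Im z)$ has sign $\pm$ exactly when $|\Im z|<\pi/2$; hence $\exp(\cS_{\pm\pi/2}h)\subseteq\Xi_{\SL_2(\C)}$. Temperedness then reduces to a norm estimate on the imaginary axis: the formula from (ii) gives $U_h^{v_1}(\ie t)=e^{\ie t/2}K_{\ie e^{-\ie t}}$, so by \eqref{eq:kwz}
\[ \|U_h^{v_1}(\ie t)\|^2 = K(\ie e^{-\ie t},\ie e^{-\ie t}) = \frac{1}{2\cos t}, \]
which obeys \eqref{eq:growthcond} with $N=1$; with the one-line check $Jv_1=v_1$ this gives $v_1\in\cH^J_{\rm temp}$ (the case $y=1$ of \eqref{Kiy_temp}). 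For (iv) I would compute $\iota_{\Xi_1}(b,a).(\pm\ie)=\pm a\ie-b$ (independent of the branch of $a^{1/2}$); the conditions $a\ie-b\in\C_+$ and $-a\ie-b\in\C_-$ read $\Re a>\Im b$ and $\Re a>-\Im b$, i.e. $|\Im b|<\Re a$, which is exactly $\Xi_2$.

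The main obstacle is the analytic-extension statement in (ii). Since $z\mapsto K_z$ is antiholomorphic, the orbit map $g\mapsto\Pi_+(g)v_1$ is not visibly holomorphic in $g$; the identity $\overline{g.\ie}=g.(-\ie)$ for real $g$ is precisely what converts the antiholomorphic kernel dependence into a holomorphic one, while the two inequalities defining $\Xi_{\SL_2(\C)}$ are exactly what keep $K_{\overline{g.(-\ie)}}$ inside $H^2(\C_+)$ and its prefactor finite. Once (ii) is in place, the estimate in (iii) is essentially free from $\|K_w\|^2=1/(2\Im w)$, and (i) and (iv) are direct Möbius substitutions.
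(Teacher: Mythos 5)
Your proposal is correct and, for three of the four parts, runs along the same lines as the paper's proof: direct M\"obius substitutions for (i) and (iv), and for (iii) the explicit diagonal form of $\exp(wh)$ together with the norm computation $\|U_h^{v_1}(\ie t)\|^2=\tfrac{1}{2\cos t}$, which is exactly the computation recorded in \eqref{Kiy_temp} (the paper's proof cites that equation, or alternatively \cite[\S 5.1, Cor.~5]{FNO25a}, at this point). The one substantial step, the analytic extension in (ii), is where you genuinely diverge: the paper disposes of it in one line by citing Kr\"otz's crown theorem \cite[Thm.~6.6(iv)]{Kr09}, using that $v_1$ is a $K$-finite vector of an irreducible representation, whereas you produce the extension explicitly via $\Pi_+(g)v_1=\frac{1}{d-c\ie}\,K_{\overline{g.(-\ie)}}$ and the identity $\overline{g.\ie}=g.(-\ie)$ for real $g$, which converts the antiholomorphic dependence of the reproducing kernel into a holomorphic one. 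Your route is self-contained, and it also makes visible why the extension actually lives on the larger domain $\{g\in\SL_2(\C): g.(-\ie)\in\C_-\}$ — a fact the paper only records in the remark following the proposition. Both arguments are valid; the paper's is shorter but imports crown theory.

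One point in (i) deserves attention: your computation yields $\Pi_+(k_\theta)v_1=e^{\ie\theta}v_1$, and you accordingly claim only that the line $\C v_1$ is $\SO_2(\R)$-invariant. That weaker statement is in fact the correct one: for $k=-\bone\in\SO_2(\R)$ one has $\Pi_+(-\bone)=-\id$, so the literal assertion $\Pi_+(k)v_1=v_1$ fails; $v_1$ spans the lowest $K$-type but is not $K$-fixed. Since only $K$-finiteness of $v_1$ enters in (ii) and (iii) (and in the results of \cite{Kr09} and \cite{FNO25a} invoked there), this does not affect the remainder of the proposition, but your formulation, not the paper's, is the one your substitution actually proves.
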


\begin{prf}
	\ref{v1_item1}
	Straightforward verification.
	
\nin	\ref{v1_item2}
	Since $\Xi_{\SL_2(\C)}\subseteq\SL_2(\C)$ is the pullback of the open subset $\C_+\subseteq\C$ through the orbit map $g\mapsto g.\ie$, it follows that $\Xi_{\SL_2(\C)}$ is open in $\SL_2(\C)$. 
	One has $\SL_2(\R)\Xi_{\SL_2(\C)} \SO_2(\C)\subseteq\Xi_{\SL_2(\C)}$ since the subgroup 
	$\SO_2(\C)$ leaves $\ie\in\C_+$ invariant, 
	while the subgroup $\SL_2(\R)$ leaves the upper half-plane $\C_+$ invariant. 
	Moreover, since $v_1\in\cH$ is $K$-fixed by \ref{v1_item1}, 
	while the representation $(U,\cH)$ is irreducible, 
	it follows by \cite[Thm. 6.6(iv)]{Kr09} that the orbit map
        $U^v(g) :=  U(g)v$ 
	extends to an analytic function $\Xi_{\SL_2(\C)}\to\cH$, that is, $v_1\in\cH^\omega_U(\Xi_{\SL_2(\C)})$ (cf.\ also Subsection~\ref{sec:hardy-affine-grp}).

\nin\ref{v1_item3} 
If $w=x+\ie y\in\C$, then $\exp(wh)=\diag(\ee^{w/2}, \ee^{-w/2})$,
hence
\begin{equation*}
\exp(wh).\ie=\ee^w\ie=\ee^x(-\sin y+\ie\cos y).
\end{equation*}
Thus, if $w\in\cS_{\pm \pi/2}$, then $y\in(-\pi/2,\pi/2)$ hence $\exp(wh).\ie\in\C_+$. On the other hand, since $v_1$ is a $K$-fixed vector by \ref{v1_item1}, 
we obtain $v_1\in\cH^J_{\rm temp}$ by \cite[\S 5.1, Cor. 5]{FNO25a}.
Alternatively, we have $v_1=K_{\ie}$, so that \eqref{Kiy_temp} applies. 
	
\nin 	\ref{v1_item4}: 
	is straightforward. 
\end{prf}

\begin{rem}
	The weight set of the representation $(\Pi_+,H^2(\C_+))$ of $\SL_2(\R)$ is semibounded. 
	(See for instance \cite[Thm. 6.4,p.~303]{Su90} for the analogous assertion on the weight set of the  Hardy space representation of $\SU_{1,1}(\C)$
        on the unit disk.)
	It then follows by \cite[Thm.~6.6(ii)]{Kr09} that the
        maximal domain of the analytic extension of the orbit map $U^v(g) = U(g)v$ 
	is actually bigger than $\Xi_{\SL_2(\C)}$.
        It consists of all $g \in \SL_2(\C)$ with $g.\ie \in \C_+$
     (see also Example~\ref{ex:affine-group}). 
\end{rem}

\subsection{Consequences for other Lie groups} 
  
Using the following lemma we will next derive
a consequence of Proposition~\ref{sharp_prop} that applies in particular to  solvable Lie algebras containing Euler elements.  
(See also Proposition~\ref{prop:E}.)  

\begin{lem}
\label{lem:abs-closed}
Any Lie group morphism $\varphi\colon\Aff(\R)_e \to G$ 
with discrete kernel is injective and its image
is a closed subgroup of the Lie group $G$. 
\end{lem}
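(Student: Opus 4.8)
The plan is to treat the two conclusions separately, since injectivity is elementary while closedness of the image is the real content. For injectivity I would first invoke that a discrete normal subgroup of a connected Lie group is central, so $\ker\varphi\subseteq Z(\Aff(\R)_e)$; a direct computation with the product $(b_1,a_1)(b_2,a_2)=(b_1+a_1b_2,a_1a_2)$ shows $Z(\Aff(\R)_e)=\{e\}$, whence $\ker\varphi=\{e\}$. Since $\ker\varphi$ is discrete, $\L(\ker\varphi)=0$, so $\psi:=\L(\varphi)$ is injective and $\varphi$ is an immersion. Writing $X_0:=\psi(x)$ and $H_0:=\psi(h)$ we have $[H_0,X_0]=X_0$ with $X_0,H_0\neq0$, and the image is $P:=\varphi(\Aff(\R)_e)=NA$, where $N:=\exp(\R X_0)$ and $A:=\exp(\R H_0)$.

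The main obstacle is that an injective immersion of Lie groups may have non-closed image (think of a dense one-parameter subgroup of a torus), so closedness must be extracted from the specific structure of $\Aff(\R)_e$ — concretely from the fact that $h$ acts on $x$ with the nonzero eigenvalue $1$. The decisive computation is $\Ad(\exp tH_0)X_0=e^{t\,\ad H_0}X_0=e^tX_0$, i.e.\ conjugation by $A$ rescales $N$ by $e^t$. I would first prove that $N$ is closed. Let $\overline N$ be its closure, a connected abelian Lie group, and let $T$ be its maximal compact torus. Conjugation by $\exp(tH_0)$ restricts to a one-parameter group $c_t$ of automorphisms of $\overline N$; as $\Aut(T)$ is discrete and $c_0=\mathrm{id}$, $c_t$ fixes $T$ pointwise. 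If $T\neq\{e\}$, choose $p\in T\setminus\{e\}$ and, by density of $N$, a sequence with $\exp(s_nX_0)\to p$; picking $t$ with $e^t=2$ and applying the continuous map $c_t$ gives $\exp(2s_nX_0)\to p$, while $\exp(2s_nX_0)=\exp(s_nX_0)^2\to p^2$, forcing $p^2=p$ and hence $p=e$, a contradiction. Thus $T=\{e\}$, $\overline N\cong\R^a$, and $N=\exp(\R X_0)$ is a line, hence closed; so $N\cong\R$ is a closed normal subgroup of $P$ and, by continuity, of $\overline P$.

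With $N$ closed and normal in $\overline P$ I would pass to the quotient. Here $\overline P$, being the closure of the solvable group $P$, is connected solvable, and $Q:=\overline P/N$ is the closure of $P/N\cong\R_{>0}$, hence a connected abelian Lie group containing this one-parameter image densely. The modular homomorphism $\delta\colon\overline P\to\Aut(N)_e=\R_{>0}$, determined by $g\exp(sX_0)g^{-1}=\exp(\delta(g)s\,X_0)$, satisfies $\delta(\exp tH_0)=e^t$ and $\delta|_N=1$, so it descends to a nontrivial continuous homomorphism $\bar\delta\colon Q\to\R_{>0}$. Writing $Q\cong\R^{c}\times\T^{d}$, the density of a one-parameter subgroup forces $Q$ to be either compact ($c=0$) or isomorphic to $\R$ ($c=1,\ d=0$), since a one-parameter subgroup with nonzero $\R$-component is never dense once $d\ge1$. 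As $\bar\delta$ is nontrivial and a compact group admits no nonconstant homomorphism into $\R_{>0}$, the compact case is excluded and $Q\cong\R$. Hence $\dim\overline P=\dim N+\dim Q=2=\dim P$, so the Lie algebra of $\overline P$ equals $\R H_0\oplus\R X_0$; therefore $P$ is a connected immersed subgroup of $\overline P$ of full dimension, hence open, hence (as an open subgroup of the connected group $\overline P$) all of $\overline P$, which shows that $P$ is closed.
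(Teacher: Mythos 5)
Your argument is correct, and for the substantive half of the lemma it takes a genuinely different route from the paper. The injectivity step is identical: a discrete normal subgroup of a connected Lie group is central, and $Z(\Aff(\R)_e)=\{e\}$. For closedness of the image, however, the paper simply cites Omori's theorem on homomorphic images of Lie groups \cite[Thm.~1.2(a)]{Om66}, whereas you give a self-contained structural proof that exploits the defining relation $[h,x]=x$: the dilation action $\Ad(\exp tH_0)X_0=e^tX_0$ first kills the toral part of $\overline{N}$ (your $p^2=p$ trick, after noting that a one-parameter family in the discrete group $\Aut(T)$ is constant), forcing $N\cong\R$ to be closed, and then the induced nontrivial modular character $\bar\delta\colon Q\to\R_{>0}$ rules out the compact alternative for $Q=\overline{P}/N$, leaving $\dim\overline{P}=2$ and hence $P=\overline{P}$. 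All the intermediate claims check out (properness of a one-parameter curve with nonzero $\R^c$-component, characteristic nature of the maximal torus, openness of a full-dimensional integral subgroup). What the paper's approach buys is brevity and generality (Omori's criterion covers all source groups at once); what yours buys is a transparent, citation-free argument showing exactly which structural feature of $\Aff(\R)_e$ — the nonzero eigenvalue of $\ad h$ on $x$ together with the trivial center — prevents the pathology of a non-closed (e.g.\ dense-in-a-torus) image.
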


\begin{prf} The hypothesis $\ker\Lie(\varphi)=\{0\}$ implies that $\ker\varphi$ is a discrete normal subgroup of $\Aff(\R)_e$, hence $\ker\varphi$ is contained in the center of the connected Lie group $\Aff(\R)_e$. 
Since the center of $\Aff(\R)_e$ is trivial, it follows that $\varphi$ is injective. Moreover, by 
\cite[Thm. 1.2(a)]{Om66}, the subgroup $\varphi(\Aff(\R)_e)$ is closed in $G$.
\end{prf}

\begin{prop}
\label{prop:22May2024_new}
Let $G$ be a 
Lie group with an inclusion $\eta \: G\to G_\C$ into a complex Lie
group~$G_\C$ with Lie algebra $\g_\C$. 
Assume that $h\in\g$ is an Euler element that gives rise to
an involutive automorphism $\tau_h\in\Aut(G)$, $(U,\cH)$ is an
antiunitary representation of the Lie group $G_{\tau_h}$,
and write $J:=U(\tau_h)$, and
\[ \g_{\pm 1}:=\{x\in\g:[h,x]=\pm x\}.\] 
Also assume that $N\subseteq G$ is a connected closed normal subgroup whose Lie algebra $\fn$ satisfies $\g=\fn\rtimes\R h$. 
For $\Xi:= N_\C \exp(\cS_{\pm \pi/2}h) \subseteq G_\C$,
we have: 
\begin{enumerate}[{\rm(a)}] \item\label{prop:22May2024_item1_new}
$\cH_{\rm temp}^J\cap \cH^\omega(\Xi)\subeq 
\bigcap_{x\in\g_1\cup\g_{-1}}\ker\partial U(x). $
\item\label{prop:22May2024_item2_new}
If $\g_{\pm 1}$ are not both contained in $\ker \dd U$,   
then  $\cH_{\rm temp}^J\cap \cH^\omega(\Xi)$ does not contain $G$-cyclic subspaces. 
\end{enumerate}
\end{prop}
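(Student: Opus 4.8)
The plan is to reduce both statements to the affine group $\Aff(\R)_e$, for which the relevant vanishing is already recorded in Proposition~\ref{sharp_prop}(a), by restricting the representation to the two-dimensional subgroups generated by $h$ together with an element of $\g_1$ or $\g_{-1}$.

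For~(a), I would fix $0\ne x\in\g_1$ (the case $x\in\g_{-1}$ being symmetric, treated below) and consider the Lie algebra homomorphism $\aff(\R)\to\g$ sending the standard generators to $x$ and $h$; as $[h,x]=x$ it respects the bracket, and since $\Aff(\R)_e$ is simply connected it integrates to a morphism $\varphi\colon\Aff(\R)_e\to G$ with $\L(\varphi)(0,1)=h$, $\L(\varphi)(1,0)=x$, which is injective with closed image by Lemma~\ref{lem:abs-closed}. The geometric point is that $\varphi$ extends to a morphism of crowned Lie groups $(\Aff(\R)_e,(0,1),\Xi_1)\to(G,h,\Xi)$: since $x\in\g_1\subseteq\fn$ we have $\exp(\C x)\subseteq N_\C$, so writing a point of $\Xi_1$ in the form $\exp(b\cdot(1,0))\exp(z\cdot(0,1))$ with $|\Im z|<\pi/2$ (cf.~Example~\ref{ex:affine-group}), its image $\exp(bx)\exp(zh)$ lies in $N_\C\exp(\cS_{\pm\pi/2}h)=\Xi$. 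Consequently, for $v\in\cH^\omega(\Xi)\cap\cH^J_{\rm temp}$, pulling the holomorphic orbit map $U^v$ back along $\varphi_\C$ exhibits $v$ as a $\Xi_1$-analytic vector for $U_1:=U\circ\varphi$; and since the tempered condition and the conjugation $J=U(\tau_h)=U_1(\tau_{(0,1)})$ involve only the one-parameter group $U(\exp\R h)$, we also get $v\in(\cH^J_{\rm temp})_{U_1}$. Proposition~\ref{sharp_prop}(a) applied to $U_1$ then yields $v\in\ker\partial U_1((1,0))=\ker\partial U(x)$. For $x\in\g_{-1}$ one repeats the argument with $-h$ in place of $h$: then $[-h,x]=x$, while $\Xi=N_\C\exp(\cS_{\pm\pi/2}(-h))$ and $J$, $\cH^J_{\rm temp}$, and $\tau_h=\tau_{-h}$ are unchanged, because $\cS_{\pm\pi/2}$ is symmetric and $\ad h$ has integer eigenvalues. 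This gives~(a).

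For~(b), suppose some $\sF\subseteq\cH^J_{\rm temp}\cap\cH^\omega(\Xi)$ were $G$-cyclic. By~(a), $\dd U(x)\,\sF=0$ for all $x\in\g_1\cup\g_{-1}$ (note $\sF\subseteq\cH^\omega\subseteq\cH^\infty$, so $\partial U$ and $\dd U$ agree on $\sF$). The first step is to upgrade this to the whole ideal $\fj$ generated by $\g_1+\g_{-1}$. Since $\g$ is $3$-graded, $\fj=\g_1+[\g_1,\g_{-1}]+\g_{-1}$, so it suffices to treat $z=[x,y]$ with $x\in\g_1$, $y\in\g_{-1}$; for $v\in\sF$ one computes on smooth vectors $\dd U([x,y])v=\dd U(x)\dd U(y)v-\dd U(y)\dd U(x)v=0$ because $\dd U(x)v=\dd U(y)v=0$. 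Hence $\dd U(\fj)\sF=0$, and since $\sF$ consists of analytic vectors, $\sF$ lies in the closed subspace $\cH^L$ of vectors fixed by the connected normal subgroup $L\trianglelefteq G$ with Lie algebra $\fj$. Normality gives $U(g)\cH^L=\cH^{gLg^{-1}}=\cH^L$, so $\cH^L$ is $G$-invariant, and $G$-cyclicity of $\sF\subseteq\cH^L$ forces $\cH=\oline{\Spann_\C U(G)\sF}\subseteq\cH^L$; thus $L$ acts trivially and $\dd U(\fj)=0$, whence $\g_1,\g_{-1}\subseteq\ker\dd U$, contradicting the hypothesis that $\g_1$ and $\g_{-1}$ are not both contained in $\ker\dd U$.

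The inclusion $\varphi_\C(\Xi_1)\subseteq\Xi$ and the bracket computation are both routine. The step I expect to require the most care is the bookkeeping in~(a): that the restriction to the embedded affine subgroup lands precisely in the ``too large'' crown $\Xi_1$ (and not the smaller $\Xi_2$), so that Proposition~\ref{sharp_prop}(a) — rather than~(b) — applies and produces the vanishing of $\partial U$ on the root spaces. Concretely, the delicate identifications are that the restriction of a $\Xi$-analytic vector is genuinely a $\Xi_1$-analytic vector for $U_1$, and that the tempered and $J$-fixed conditions transfer verbatim under $\varphi$; once these are in place, both parts follow formally.
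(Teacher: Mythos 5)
Your proposal is correct and, for part~(a), follows essentially the same route as the paper: restrict to the closed simply connected subgroup $\exp(\R x+\R h)$ (closedness via Lemma~\ref{lem:abs-closed}), observe that $\exp(\C x)\exp(\cS_{\pm\pi/2}h)\subseteq\Xi$ because $\g_{\pm1}\subseteq[\g,\g]\subseteq\fn$, and invoke Proposition~\ref{sharp_prop}(a); your explicit treatment of the $\g_{-1}$ case via $h\mapsto -h$ is a point the paper leaves implicit. For part~(b) you take a slightly different but equivalent path: the paper applies Lemma~\ref{lem:L1}(ii) directly to conclude that $\cH^{G_+}\cap\cH^{G_-}$ is a closed $G$-invariant subspace, whereas you first enlarge $\g_1+\g_{-1}$ to the ideal $\g_1+[\g_1,\g_{-1}]+\g_{-1}$ and pass to its integral normal subgroup; both arguments hinge on the same observation that the joint fixed space is closed and $G$-invariant, so your extra bracket computation is harmless but not needed.
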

	
\begin{prf}
\ref{prop:22May2024_item1_new}
For arbitrary 
$x\in\g_1\cup\g_{-1}$ we must prove that 
\begin{equation}
\label{sharp_prop_x}
\cH_{\rm temp}^J\cap \cH^\omega(\Xi)\subeq \ker\partial U(x).
\end{equation}
To this end we first note that 
$\g_1\cup \g_{-1}\subeq [\g,\g]$. 
On the other hand, since $\g=\fn\rtimes \R h$, we have $[\g,\g]\subeq \fn$, 
hence $\g_1\cup \g_{-1}\subeq\fn$. 
In particular, $x\in \fn$ and $[h,x]=\pm x$. 
It follows by Lemma~\ref{lem:abs-closed} that  $\exp(\R x+\R h)\subeq G$ 
is a simply connected closed subgroup. 
An application of Proposition~\ref{sharp_prop} 
for the representation $(U,\cH)$, restricted to the
subgroup $\exp(\R x+\R h)$, implies 
\begin{equation}
  \label{eq:p23}
  \cH_{\rm temp}^J\cap \cH^\omega(\Xi_x)\subeq \ker\partial U(x),
  \quad \mbox{ where } \quad
  \Xi_x:=\exp(\C x) \exp(\cS_{\pm \pi/2}h)\subseteq \Xi,
\end{equation}
hence $\cH^\omega(\Xi)\subeq\cH^\omega(\Xi_x)$, so that the
inclusion \eqref{eq:p23} implies~\eqref{sharp_prop_x}. 
		
\ref{prop:22May2024_item2_new}
Since $\g_\pm\subeq\g$ are abelian Lie subalgebras, it follows that $G_\pm:=\exp\g_\pm\subeq G$ are connected closed abelian subgroups. 
It is then straightforward that 
\[\bigcap_{x\in\g_\pm}\ker\partial U(x)=\cH^{G_\pm},
  \quad \mbox{ hence  } \quad
  \bigcap_{x\in\g_1\cup \g_{-1}}\ker\partial U(x)=\cH^{G_-}\cap \cH^{G_+}.\]
By Lemma~\ref{lem:L1}\ref{lem:L1_item2}, the right-hand side of the above equality is a closed $G$-invariant subspace,  hence does not contain $G$-cyclic subspaces unless $\cH^{G_-}= \cH^{G_+}=\cH$, which is equivalent
  to $\g_{\pm 1} \subeq \ker \dd U$.
Now the conclusion follows from~\ref{prop:22May2024_item1_new}. 
\end{prf}

\subsection{Appendix: Schober's example}
\mlabel{subsec:schober}

For an entire function \(F: \C \to \C\), \(b \in \C\) and \(a \in \C^\times\) we define the entire function
\[ \tilde U(b,a)F: \C \to \C, \quad z \mapsto F\left(\frac{z+b}a\right).\]

\begin{prop} \mlabel{prop:b.1} The space
\[\cH^\omega(\C \rtimes \C_r) := \{F \in \cO(\C): \left(\forall (b,a) \in \C \rtimes \C_r\right) \tilde U(b,a)F \in H^2(\C_+)\}\]
is non-zero.
\end{prop}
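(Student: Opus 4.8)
The plan is to realize $H^2(\C_+)$ as the Fourier--Laplace image of $L^2(\R_+)$ and to produce the required entire function as $F = \cF\phi$ for a carefully chosen density $\phi$. Recall that the Fourier--Laplace transform $\cF(\phi)(z) = \int_0^\infty \ee^{\ie zp}\phi(p)\,dp$ maps $L^2(\R_+)$ onto $H^2(\C_+)$ (see Subsection~\ref{sec:hardy-affine-grp}). If $\phi$ is holomorphic on the open right half-plane $\C_r$ and decays faster than every exponential along each ray in $\C_r$, then the defining integral converges for all $z\in\C$ and $F$ is entire, so the only remaining point will be to check that $\tilde U(b,a)F\in H^2(\C_+)$ for every $(b,a)\in\C\times\C_r$.

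For such $(b,a)$ I would set $\theta := \arg a \in (-\pi/2,\pi/2)$ and write, for all $z\in\C$,
\[ \tilde U(b,a)F(z) = F\Big(\frac{z+b}{a}\Big) = \int_0^\infty \ee^{\ie (z+b)p/a}\phi(p)\,dp. \]
The key step is to rotate the contour from $\R_+$ onto the ray $\ee^{\ie\theta}\R_+$, whose bounding sector lies inside $\C_r$; the arc at infinity is killed by the super-exponential decay of $\phi$ against the merely exponential growth of $\ee^{\ie(z+b)p/a}$. Substituting $p = as$ (so that $p/a = s$ is real and positive along the rotated contour) turns the integral into $\cF(\psi)(z)$ with
\[ \psi(s) = a\,\ee^{\ie bs}\phi(as), \qquad s \ge 0. \]
Since $|\psi(s)|^2 = |a|^2\ee^{-2s\Im b}|\phi(as)|^2$ and $\phi$ decays super-exponentially along the ray $\arg p = \theta$, one gets $\psi\in L^2(\R_+)$, whence $\tilde U(b,a)F = \cF\psi \in H^2(\C_+)$.

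It then remains to exhibit $\phi$, and this is where the real difficulty lies: I need holomorphy on $\C_r$ together with super-exponential decay in \emph{every} direction of the open right half-plane, which rules out Gaussian-type choices (these decay only in a proper subsector). I would take $\phi(p) := \ee^{-p\log p}$ with the principal branch of $\log$ and $\phi(0):=1$, i.e.\ $\phi(p) = p^{-p}$. Writing $p = r\ee^{\ie\theta}$ gives $\Re(p\log p) = r(\cos\theta\,\log r - \theta\sin\theta)$, which tends to $+\infty$ like $r\log r$ for each fixed $\theta\in(-\pi/2,\pi/2)$, so $|\phi| = \ee^{-\Re(p\log p)}$ decays super-exponentially along every fixed ray. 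The subtle point, which I expect to be the main obstacle to state cleanly, is that this decay is \emph{not} uniform as $\theta\to\pm\pi/2$ (on near-vertical rays $|\phi|$ may first grow before decaying); but since membership in $H^2(\C_+)$ is tested for one fixed $(b,a)$ at a time, fixed-direction decay suffices, and the resulting $\psi$ is bounded on a compact $s$-interval and super-exponentially small beyond it, hence square-integrable. Finally $F\neq 0$ because $\phi>0$ on $\R_+$ and $\cF$ is injective, so the constructed $F$ is a nonzero element of $\cH^\omega(\C\rtimes\C_r)$.
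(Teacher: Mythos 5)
Your proof is correct, and it uses the same key example as the paper --- the function $F=\cF(\phi)$ with $\phi(p)=p^{-p}=\ee^{-p\log p}$, whose super-exponential decay along every ray in $\C_r$ is exactly the point --- but the verification that $\tilde U(b,a)F\in H^2(\C_+)$ proceeds along a genuinely different route. The paper stays entirely on the ``$F$-side'': it invokes Armitage's pointwise bound $|F(x+\ie y)|\le (|x|-\tfrac{\pi}{2})^{-1}$ for $|x|>\tfrac{\pi}{2}$ together with $|F(x+\ie y)|\le F(\ie y)$, splits each horizontal line into the region where the real part of $\frac{z+b}{a}$ exceeds $\tfrac{\pi}{2}$ in modulus and a complementary interval of controlled length, and produces an explicit bound on $\sup_{y>0}\int_\R|\tilde U(b,a)F(x+\ie y)|^2\,dx$. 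You instead work on the ``$\phi$-side'': rotating the contour from $\R_+$ to $\ee^{\ie\arg a}\R_+$ (justified by holomorphy of $p^{-p}$ on $\C_r$ and the fact that $r\cos\theta\log r$ beats any linear growth on the arc, for fixed $a$) exhibits $\tilde U(b,a)F$ as $\cF(\psi)$ with $\psi(s)=a\,\ee^{\ie bs}\phi(as)\in L^2(\R_+)$, so membership in $H^2(\C_+)$ follows from Paley--Wiener. Your observation that the decay of $\phi$ is not uniform as $\arg p\to\pm\pi/2$, but that this is harmless because each $(b,a)$ is tested separately, is accurate and is the right thing to flag. What each approach buys: yours is cleaner and more structural for the proposition as stated; the paper's explicit quantitative estimates are not wasted effort, however, since they are reused immediately after the proposition to show that $(b,a)\mapsto\tilde U(b,a)F$ is locally bounded, hence holomorphic, and to analyze the boundary behavior --- conclusions your argument would require additional uniform-in-$(b,a)$ estimates to recover.
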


\begin{proof}
We consider the entire function 
\[ F \: \C \to \C, \quad F(z) :=  \int_0^\infty t^{-t} \ee^{\ie tz} \,dt.\]
By \cite[Eqn. (4)]{Ar07}
this function satisfies
\begin{equation}
	\label{eq:1}
  |F(x+\ie y)| \leq \frac 1{|x| - \frac \pi 2} \qquad
  \mbox{ for} \quad x,y \in \R, |x| > \frac \pi 2.
\end{equation}
Further \(F \neq 0\) since $F(0) = \int_0^\infty t^{-t} \,dt > 0.$ 
We will now show that \(F \in \cH^\omega(\C \rtimes \C_r)\). We first notice that, for \(x,y \in \R\) one has
\begin{equation}
		\label{eq:2}
  |F(x+\ie y)| = \left|\int_0^\infty t^{-t} \ee^{t\ie (x+\ie y)} \,dt\right| \leq \int_0^\infty \left|t^{-t} \ee^{t\ie (x+\ie y)}\right| \,dt = \int_0^\infty t^{-t} \ee^{-ty} \,dt
  =F(\ie y).
\end{equation}
Notice that, by the monotony of the exponential function, the function
\begin{equation*}
G\colon \R\to(0,\infty), \quad G(y):=F(\ie y)
\end{equation*} 
is decreasing. 

Now let \(b \in \C\) and \(a \in \C_r\). We write \(a = c+\ie d\) with \(c \in \R_+\) and \(d \in \R\) and set
\[\gamma := -\frac dc \qquad \text{and} \qquad \eta := \frac \pi 2 \frac {|a|^2}c.\]
Then, for every \(y \in \R\), setting \(\rho := y+\Im b\), one has
\begin{align}
&\int_\R \left|(\tilde U(b,a)F)(x+\ie y)\right|^2 \,dx 
= \int_\R \left|F\left(\frac{(x+\ie y)+b}{a}\right)\right|^2 \,dx \notag
  \\&= \int_\R \left|F\left(\frac{(x+\Re b)+\ie \rho}{a}\right)\right|^2 \,dx
  = \int_\R \left|F\left(\frac{x+\ie \rho}{a}\right)\right|^2 \,dx \notag
  \\
  	\label{eq:3bis}
  &= \int_\R \left|F\left(\frac{(x+\ie \rho)(c-id)}{|a|^2}\right)\right|^2 \,dx
  = \int_\R \left|F\left(\frac{xc+\rho d}{|a|^2} + \ie  \frac{\rho c-xd}{|a|^2}\right)\right|^2 \,dx.
\end{align}
Now, one has
\[\frac{xc+\rho d}{|a|^2} = \pm \frac \pi 2,
  \quad \mbox{ if and only if } \quad
  x = \frac 1c \left[-\rho d \pm \frac \pi 2 |a|^2\right] = \gamma \rho \pm \eta.\]
Using estimate~\eqref{eq:1} 
we then get
\begin{align*}
  &\int_{\R \setminus B_{2\eta}(\gamma \rho)} \left|F\left(\frac{xc+\rho d}{|a|^2} + \ie  \frac{\rho c-xd}{|a|^2}\right)\right|^2 \,dx
  \leq \int_{\R \setminus B_{2\eta}(\gamma \rho)} \left|\frac 1{\left|\frac{xc+\rho d}{|a|^2}\right|-\frac \pi 2}\right|^2 \,dx
\\&=\frac{|a|^4}{c^2} \int_{\R \setminus B_{2\eta}(\gamma \rho)} \left|\frac 1{\left|x-\gamma \rho\right|-\eta}\right|^2 \,dx
=\frac{2|a|^4}{c^2} \int_\eta^\infty \frac 1{x^2} \,dx
=\frac{2|a|^4}{c^2} \cdot \frac 1\eta
= \frac 4 \pi \frac{|a|^2}c.
\end{align*}
On the other hand, using estimate~\eqref{eq:2}
and the monotony of the above function $G(y) = F(\ie y)$, 
we have
\begin{align*}
  &\int_{B_{2\eta}(\gamma \rho)} \left|F\left(\frac{xc+\rho d}{|a|^2} + \ie  \frac{\rho c-xd}{|a|^2}\right)\right|^2 \,dx
    \leq \int_{B_{2\eta}(\gamma \rho)} \left|G\left(\frac{\rho c-xd}{|a|^2}\right)\right|^2 \,dx  \\
  &\leq \int_{B_{2\eta}(\gamma \rho)} \left|G\left(\frac{\rho c-(\gamma \rho+2\eta)d}{|a|^2}\right)\right|^2 \,dx
  =4\eta \left|G\left(\frac{\rho c-(\gamma \rho+2\eta)d}{|a|^2}\right)\right|^2\\
  &=4\eta \left|G\left(\frac{\rho (c-\gamma d)}{|a|^2} - \frac{2\eta d}{|a|^2}\right)\right|^2
    =4\eta \left|G\left(\frac{\rho}{c} - \frac{\pi d}{c}\right)\right|^2
=\frac{2\pi|a|^2}c \left|G\left(\frac{y+\Im b-\pi d}{c}\right)\right|^2.
\end{align*}
Now, by equation~\eqref{eq:3bis}, 
we have
\begin{align*}
&\int_\R \left|(\tilde U(b,a)F)(x+\ie y)\right|^2 \,dx \\
&\qquad= \int_\R \left|F\left(\frac{xc+\rho d}{|a|^2} + \ie  \frac{\rho c-xd}{|a|^2}\right)\right|^2 \,dx
\\&\qquad= \int_{\R \setminus B_{2\eta}(\gamma \rho)} \left|F\left(\frac{xc+\rho d}{|a|^2} + \ie  \frac{\rho c-xd}{|a|^2}\right)\right|^2 \,dx + \int_{B_{2\eta}(\gamma \rho)} \left|F\left(\frac{xc+\rho d}{|a|^2} + \ie  \frac{\rho c-xd}{|a|^2}\right)\right|^2 \,dx
\\&\qquad\leq \frac 4 \pi \frac{|a|^2}c + \frac{2\pi|a|^2}c \left|G\left(\frac{y+\Im b-\pi d}{c}\right)\right|^2.
\end{align*}
This, using the monotony of the function \(G\), yields
\begin{align*}
\left\lVert \tilde U(b,a)F\right\rVert_{H^2}^2 &= \sup_{y > 0} \int_\R \left|(\tilde U(b,a)F)(x+\ie y)\right|^2 \,dx
\\&\leq \sup_{y > 0} \left[\frac 4 \pi \frac{|a|^2}c + \frac{2\pi|a|^2}c \left|G\left(\frac{y+\Im b-\pi d}{c}\right)\right|^2\right]
\\&= \frac 4 \pi \frac{|a|^2}c + \frac{2\pi|a|^2}c \left|G\left(\frac{\Im b-\pi d}{c}\right)\right|^2 < \infty
\end{align*}
and therefore \(\tilde U(b,a)F \in H^2(\C_+)\).
\end{proof} 

The estimates in the preceding proof imply that the function
\[ \C \times \C_r \to H^2(\C_+), \quad (b,a) \mapsto
  \tilde U^F(b,a):= \tilde U(b,a) F \]
  is locally bounded. 
  Since it is pointwise holomorphic, it follows
  from \cite[Cor.~A.III.3]{Ne00} that it is holomorphic.
  As $F\res_{\ie \R}$ is real, we also have $JF = F$.
  For the boundary values of $\tilde U(0,a)F$ for $a = -\ie$, we
  obtain the function $F(\ie z)$ in $\cO(\C_+)$. The
  boundary values of this function on $\R$ are given by the real-valued
  function
  \[ H(x) := \int_{\R_+} p^{-p} \ee^{px}\, dp \]
  which grows for $x \to \infty$ faster than any function $\ee^{px}$.
  Therefore $H$ is not a tempered distribution, hence in particular
  not contained in the space $H^2(\C_+)^{-\infty}_{U_h}$.

\section{Regularity of representations and existence of nets}
\mlabel{sec:4}

In this section we show that,
for an antiunitary representation $(U,\cH)$ of $G_{\tau_h}$,  
$h$-regularity is equivalent to the existence of a net 
  of real subspaces satisfying (Iso), (Cov), (RS) and (BW)
  for some open subset $W \subeq G$ (Theorem~\ref{thm:reg-net}). 

\begin{thm} \mlabel{thm:reg-net}
    Let $(U,\cH)$ be an antiunitary representation of
  $G_{\tau_h}$ and
  $\sV = \sV(h,U) \subeq \cH$ the corresponding standard subspace.
  Then there exists a net $(\sH(\cO))_{\cO \subeq G}$ on
  open subsets of $G$, satisfying
  {\rm(Iso), (Cov), (RS)} and {\rm(BW)} for some
  open subset $W \subeq G$, if and only if $U$ is $h$-regular, i.e.,
  $\sV_N := \bigcap_{g \in N} U(g)\sV$ is cyclic for some $e$-neighborhood $N \subeq G$.
  
  Specifically, this holds for all $e$-neighborhoods for which
  there exists an open subset $\cO \subeq W$ with
  $N\cO \subeq W$, i.e., $N \subeq \bigcap_{g \in \cO} Wg^{-1}$. 
\end{thm}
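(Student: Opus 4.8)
The plan is to prove the two implications separately, treating the reverse one (a net yields $h$-regularity) as essentially \cite[\S4]{MN24} and concentrating the work on the forward implication, where a net must be built from scratch. Throughout I use the standard subspace $\sV=\sV(h,U)=\Fix(J\Delta^{1/2})$ with $J=U(\tau_h)$, $\Delta=\ee^{2\pi\ie\partial U(h)}$.

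\emph{Net $\Rightarrow$ $h$-regular, with the ``Specifically'' clause.} Suppose $(\sH(\cO))$ satisfies (Iso), (Cov), (RS), (BW) with $\sH(W)=\sV$. Let $N$ be an $e$-neighbourhood, which I may take symmetric, and let $\cO\subeq W$ be nonempty open with $N\cO\subeq W$. For $g\in N$ then $g\cO\subeq W$, so (Cov) and (Iso) give $U(g)\sH(\cO)=\sH(g\cO)\subeq\sH(W)=\sV$, whence $\sH(\cO)\subeq U(g^{-1})\sV$. Intersecting over $g\in N=N^{-1}$ yields $\sH(\cO)\subeq\bigcap_{g\in N}U(g)\sV=\sV_N$. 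Since $\cO\neq\eset$, (RS) makes $\sH(\cO)$ cyclic, hence so is the larger space $\sV_N$. Such a pair exists: fixing $w_0\in W$ and using continuity of multiplication at $(e,w_0)$ produces $N$ and $\cO$ with $N\cO\subeq W$. Thus $U$ is $h$-regular.

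\emph{$h$-regular $\Rightarrow$ net.} Here I would use the Haag-dual construction over the canonical Euler wedge. Let $W:=W^G=G^h_e\exp(\Omega)$ as in \eqref{eq:defwg}, with $\Omega\subeq\g^{-\tau_h^\g}=\g_1(h)\oplus\g_{-1}(h)$ a small convex $0$-neighbourhood fixed below, and set
\[ \sH(\cO):=\bigcap_{\substack{g\in G\\ \cO\subeq gW}}U(g)\sV \qquad(\cO\subeq G\text{ open}).\]
Isotony is immediate (a smaller $\cO$ admits more indices $g$, hence a smaller intersection), and covariance follows by reindexing $g\mapsto g_0g$, giving $U(g_0)\sH(\cO)=\sH(g_0\cO)$. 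Two stabilisation facts drive everything else. Since $G^h_e\subeq G^{\tau_h}$ centralises $h$, each $U(g)$ with $g\in G^h_e$ commutes with $\Delta$ (as $\Ad(g)h=h$) and with $J$ (as $\tau_h(g)=g$), so $U(g)\sV=\sV$; and $G^h_e\supeq\exp(\R h)$ leaves $W$ invariant, so $\sH(W)$ is automatically invariant under the modular group $\Delta^{\ie\R}=U(\exp\R h)$. For (RS) it suffices to treat arbitrarily small $\cO$, since these yield the smallest subspaces. For $\cO$ a small neighbourhood of a point $g_1$, the index set $\{g:\cO\subeq gW\}$ is right $G^h_e$-invariant and shrinks towards $g_1W^{-1}$; using $U(G^h_e)\sV=\sV$, the intersection collapses to one over the transversal $\exp(\g_{\pm1}(h))$-directions, so $\sH(\cO)$ contains a $U(g_1)$-translate of $\bigcap_{x\in B}U(\exp(-x))\sV$ for a small ball $B\subeq\g_1(h)\oplus\g_{-1}(h)$, i.e. of a subspace $\sV_{N'}$ with $N'$ governed by $\Omega$. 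Choosing $\Omega$ small enough that $N'$ lies inside the regularity neighbourhood $N$, $h$-regularity makes $\sV_N\subeq\sV_{N'}$ cyclic, hence $\sH(\cO)$ cyclic; covariance propagates this to all nonempty open sets.

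The \textbf{main obstacle} is (BW), namely the equality $\sH(W)=\sV$. The inclusion $\sH(W)\subeq\sV$ is free (take $g=e$). The reverse inclusion is equivalent to the monotonicity statement $sW\subeq W\Rightarrow U(s)\sV\subeq\sV$, for then $\sV\subeq U(g)\sV$ for every $g$ with $W\subeq gW$, which is exactly the index set defining $\sH(W)$. For the canonical Euler wedge the compression semigroup $\{s:sW\subeq W\}$ is $G^h_e$ times a part generated by the positive cone in $\g_1(h)\oplus\g_{-1}(h)$: on the $G^h_e$-part the claim is the stabilisation above, and on the cone part it is precisely the one-particle Borchers theorem for standard subspaces, the sign being fixed by $h\in\g^{\tau_h}$ and $\sV=\sV(h,U)$ (cf.\ \cite{Lo08}). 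Making this description of the compression semigroup precise for a genuinely non-reductive $G$, and reconciling it with the smallness of $\Omega$ forced by (RS), is where the real work lies; I would import the monotonicity of $g\mapsto U(g)\sV$ from the Euler-element theory of \cite{MN24} rather than reprove it. Once (BW) holds, all four properties are verified for the wedge $W$, completing the equivalence.
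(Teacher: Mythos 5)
Your forward implication (a net yields $h$-regularity) is correct and essentially identical to the paper's argument. The genuine gap is in the converse. You insist on taking $W$ to be the canonical wedge $W^G=G^h_e\exp(\Omega)$ and then reduce (BW) to the monotonicity statement that $sW\subeq W$ implies an inclusion between $U(s)\sV$ and $\sV$ on the compression semigroup of $W$. That is a one-particle Borchers-type theorem, and it is false for general antiunitary representations: it requires a spectral condition of the form $\mp\ie\,\partial U(x)\geq 0$ on the cone directions in $\g_{\pm1}(h)$, which is nowhere assumed here. The whole point of Theorem \ref{thm:reg-net} is to characterize the existence of nets by the much weaker condition of $h$-regularity (e.g.\ for the split oscillator group of Section \ref{sec:5}, where no such positivity is available), so ``importing the monotonicity from \cite{MN24}'' is not an option --- it does not exist in this generality. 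Your description of the compression semigroup of $G^h_e\exp(\Omega)$ for a small bounded $\Omega$ is also unjustified, and the (RS) step (``the intersection collapses to the transversal directions'') is asserted rather than proved.

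The theorem only demands (BW) for \emph{some} open $W$, and the paper exploits exactly this freedom. It chooses $N_1\subeq N$ open with $N_1N_1^{-1}\subeq N$ and sets $W:=\exp(\R h)N_1$, keeping your Haag-dual definition $\sH(\cO)=\bigcap_{g\in G,\ \cO\subeq gW}U(g)\sV$. Then: (i) for $\eset\neq\cO\subeq N_1$, any $g$ with $\cO\subeq gW=g\exp(\R h)N_1$ satisfies $g\in \cO N_1^{-1}\exp(\R h)\subeq N\exp(\R h)$, so $\sH(\cO)\supeq\bigcap_{g\in N\exp(\R h)}U(g)\sV=\sV_N$ (using $U(\exp\R h)\sV=\sV$), which is cyclic by $h$-regularity --- this gives (RS) with no wedge geometry at all; and (ii) $\sH(W)\subeq\sV$ (take $g=e$), $\sH(W)$ is cyclic by (i) and invariant under the modular group because $\exp(\R h)W=W$, so $\sH(W)=\sV$ by \cite[Prop.~3.10]{Lo08}. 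No Borchers-type input is needed. To salvage your route you would have to add a spectral hypothesis (changing the theorem) or switch to a wedge of this product form.
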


\begin{prf} ``$\Rarrow$'': If a net $\sH$ with the asserted properties
  exists, then $\sV = \sH(W)$ by (BW), and for any relatively compact open subset $\cO
  \subeq W$ there exists an $e$-neighborhood $N \subeq G$ with 
  $N\cO \subeq W$. Then, for all $g^{-1} \in N$, we have
  \[ U(g)^{-1}\sH(\cO) = \sH(g^{-1}.\cO)
    \subeq \sH(W)= \sV,\quad \mbox{ hence } \quad
    \sH(\cO) \subeq \sV_N.\]
  Now (RS) implies that $\sH(\cO)$ is cyclic, so that $\sV_N$ is also
  cyclic, and thus   $U$ is $h$-regular.

  \nin ``$\Larrow$'':  Assume that $\sV_N$ is cyclic for an
  $e$-neighborhood $N$. Pick an open $e$-neighborhood $N_1 \subeq N$
  with $N_1 N_1^{-1} \subeq N$. Then
  \[ W := \exp(\R h) N_1 \]
  is an open subset of $G$. We define the net
  $\sH$ on open subsets of $G$ by
  \[ \sH(\cO) = \bigcap_{g \in G, \cO \subeq gW} U(g)\sV.\]
  This net satisfies (Iso) and (Cov) by \cite[Lemma~2.17]{MN24}.

  We now verify the Reeh--Schlieder property (RS).
  So let $\eset\not=\cO \subeq G$ be an open subset.
  By (Iso) and (Cov), it suffices to show that $\sH(\cO)$ is cyclic
  if $\cO \subeq N_1$. Then, for $g \in G$,
  the inclusion $\cO \subeq gW = g \exp(\R h) N_1$ implies
  \[ g \in \cO N_1^{-1} \exp(\R h) \subeq N_1 N_1^{-1} \exp(\R h)
    \subeq N \exp(\R h),\]
  so that
  \[ \sH(\cO)
    \supeq \bigcap_{g \in N\exp(\R h)} U(g) \sV 
    = \bigcap_{g \in N} U(g) \sV = \sV_N \]
  implies that $\sH(\cO)$ is cyclic. This proves (RS).
  It follows in particular that $\sH(W)$ 
  is cyclic, so that \cite[Lemma~2.17(c)]{MN24} implies 
  $\sH(W)=~\sV$. Therefore (BW) is also satisfied.
\end{prf}

\begin{rem} Note that $v \in \sH(\cO)$ is equivalent to
\[ g^{-1}\cO  \subeq W \quad \Rarrow \quad U(g)^{-1} v \in \sV.\] 
  If $\cO \subeq W$ is relatively compact, this condition
  holds for $g$ in an $e$-neighborhood. Therefore $\sH(\cO)$
  consists of vectors $v \in \cH$ whose orbit map
  $U^v \: G \to \cH$ maps an $e$-neighborhood into~$\sV$
  (cf.\ Proposition~\ref{prop:4.8}(d)). Put differently,
  the subset $(U^v)^{-1}(\sV) \subeq G$ has interior points.
\end{rem}

\begin{rem} (a)  Suppose that $v \in \sV \cap \cH^\omega$ is an analytic vector
  and $U(N)v \subeq \sV$ holds for an $e$-neighborhood $N\subeq G$. 
  Then uniqueness of analytic continuation implies 
  $U(G)v \subeq \sV$, i.e., $v \in \sV_G$.

  If, in addition, $v$ is $G$-cyclic, then $\sV_G$ is a cyclic real subspace,
  so that its invariance under the modular group of $\sV$ implies with
  \cite[Prop.~3.10]{Lo08}  that $\sV = \sV_G$, i.e., that
  $\sV$ is $G$-invariant. 
Then $U(G)$ commutes with the Tomita operator of $V$, 
  hence with both $J_\sV$ and the modular group
  $\Delta_{\sV}^{\ie \R} = e^{\R \partial  U(h)}$. 
In particular, $[h,\g]\subeq\ker\dd U$ 
  hence $U\circ \tau_h=U$.  
  Since $J_\sV$ commutes with $U(G)$, 
  it also follows that 
$\cH^{J_\sV}$ is a real orthogonal
  representation of $G$, and $U$ is the complexification,
  considered as a representation of $G_{\tau_h}$ on $\cH \cong (\cH^{J_\sV})_\C$.
  This is the context where $\partial U(h)$ and $J_\sV$ commute with the whole
  group~$G_{\tau_h}$.

  \nin (b) Another perspective on (a) is that for the cyclic subrepresentation
  generated by any $v \in \cH^\omega \cap \sV_N$ 
  the operators
  $\partial U(h)$ and $J_\sV$ commute with $U(G)$.
  So $v$ is fixed by the normal subgroup $B$ 
  with Lie algebra
  \[ \fb := \g_1 + [\g_1,\g_{-1}] + \g_{-1}.\]
\end{rem}

\section{Regularity for the split oscillator group}
\mlabel{sec:5}

The smallest pair $(G,h)$ for which all  the $h$-regularity
  criteria from \cite{MN24} fail, is the split 
  oscillator group  of dimension~$4$.
  In this section we show in particular that, also for this group,
  all antiunitary representations are $h'$-regular for
  all Euler elements $h' \in \g$.

The split oscillator group is
\[ G := \Heis(\R^2) \rtimes_\sigma \R
  \quad \mbox{ with } \quad
  \sigma_t = \ee^{th}, \quad h = \pmat{1 & 0 \\ 0 & -1}
  \quad \mbox{ and }\quad  \g = \heis(\R^2) \rtimes \R h,\]
where $\Heis(\R^2)$ is the $3$-dimensional Heisenberg group. 
Note that $h$ is an Euler element in $\g$. 
We choose a basis $p,q,z \in \heis(\R^2)$
with 
\begin{equation}
\label{eq:osc-def}
 [q,p] = z, \quad [h,q] = q, \quad [h,p] = -p, \quad [h,z] = 0.
\end{equation}
The corresponding involution is given by
\[ \tau_h^\g(z,q,p,t) = (z,-q,-p,t).\] 

We realize $\g$ as a subalgebra of $\g^* := \fsl_3(\R)$, where
\[ h = \frac{1}{3}
  \pmat{1 & 0 & 0 \\ 0 & -2 & 0 \\ 0& 0 & 1}, \quad
q = \pmat{0 & 1 & 0 \\ 0 & 0 & 0 \\ 0& 0 & 0}, \quad
p = \pmat{0 & 0 & 0 \\ 0 & 0 & 1 \\ 0& 0 & 0}, \quad
z = \pmat{0 & 0 & 1 \\ 0 & 0 & 0 \\ 0& 0 & 0}.\]
Here a key point is that $h$ is an Euler element of $\fsl_3(\R)$, 
i.e., $V = \R^3$ is a $2$-graded $\g$-module:
\[ V = V_{1/3} \oplus V_{-2/3}, \quad
 V_{1/3} = \R \be_1 + \R \be_3, \quad 
 V_{-2/3} = \R \be_2.\] 

The involution $\tau_h$ on $\SL_3(\R)$ corresponds to conjugation
with
\[ \kappa := \exp(\pi \ie h) = \diag(\zeta, \zeta^{-2}, \zeta), \quad
  \zeta = \ee^{\pi \ie /3}.\]
It satisfies $\kappa^6 =\bone$ and $\kappa^3 = - \bone$, so that
$\Ad(\kappa) = \tau_h^\g$ is an involution. 
Note that $G = N \rtimes \exp(\R h)$, where
$N$ is the unipotent radical of a minimal parabolic
subgroup
\begin{equation}
  \label{eq:g*}
  G\subeq P = M AN \subeq G^* := \SL_3(\R),
\end{equation}
where $M$ is
  the subgroup of diagonal matrices with entries $\{\pm 1\}$,
  $A$ the subgroup of positive diagonal matrices,
and $N$ the unipotent upper triangular matrices. 

\begin{lem} \mlabel{lem:twist-osci}
  Let $G = N \rtimes \R$ be a semidirect product
  and $h = (0,1) \in \g$ an Euler element.
  For a unitary representation $(U,\cH)$ of $G$ and $\lambda \in \R$, we
  consider the twisted representation
  \[ U_\lambda(n,t) := e^{i\lambda t} U(n,t).\]
  Then $U$ is $h$-regular in the sense of
  {\rm Definition~\ref{def:unit-regular}} 
if and only if all twists $U_\lambda, \lambda \in \R,$
  are $h$-regular.   
\end{lem}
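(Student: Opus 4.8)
The plan is to exploit that twisting by the characters $\chi_\lambda(n,t):=e^{i\lambda t}$ is an action of the additive group $\R$ on representations of $G$: one has $(U_\lambda)_\mu=U_{\lambda+\mu}$ and $U_0=U$. Hence the asserted equivalence is symmetric, and it suffices to prove the single implication that $h$-regularity of $U$ forces $h$-regularity of $U_\lambda$ for every $\lambda\in\R$; applying this to $U_\lambda$ with twist $-\lambda$ recovers the opposite implication, and the direction ``all twists regular $\Rightarrow U=U_0$ regular'' is trivial. So everything reduces to showing that $h$-regularity is stable under a single arbitrary twist.

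First I would record the infinitesimal effect of the twist. Since $(n,t)\mapsto t$ is a homomorphism $G\to\R$ with kernel $N$ and $\chi_\lambda=e^{i\lambda t}$, the twist leaves the restriction to $N$ unchanged, $U_\lambda\vert_N=U\vert_N$, and shifts only the boost generator: $\partial U_\lambda(h)=\partial U(h)+i\lambda\,\bone$, while $\partial U_\lambda(x)=\partial U(x)$ for $x\in\fn$. In particular the grading relations are preserved, $[\partial U_\lambda(h),\partial U_\lambda(x)]=\pm\,\partial U_\lambda(x)$ for $x\in\g_{\pm1}(h)$, and the modular operator only rescales by a positive scalar, $\Delta_\lambda:=e^{2\pi i\partial U_\lambda(h)}=e^{-2\pi\lambda}\Delta$, so that the modular one-parameter groups agree up to a scalar phase, $\Delta_\lambda^{it}=e^{-2\pi i\lambda t}\Delta^{it}$. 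Consequently the modular \emph{automorphism} group is literally unchanged, $\Ad(\Delta_\lambda^{it})=\Ad(\Delta^{it})$. I would also note that $\chi_\lambda$ is $\tau_h$-invariant, since $\tau_h$ preserves the $t$-coordinate; this is what keeps the twisted picture compatible with the involution.

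The substance is then to transfer the cyclicity criterion of Theorem~\ref{thm:reg-net}: $U$ is $h$-regular iff $\sV_{\cN}:=\bigcap_{g\in\cN}U(g)\sV$ is cyclic for some identity neighbourhood $\cN$, where $\sV=\sV(h,U)$. The favourable feature is that, on the level of the conjugation by $U_\lambda(g)$, the character phases cancel, so the \emph{dynamics} generating the small translates are the same as for $U$; only the modular data $(\Delta,J)$ defining $\sV$ itself change, $\Delta$ by the scalar $e^{-2\pi\lambda}$. I would therefore identify the twisted standard subspace $\sV_\lambda=\sV(h,U_\lambda)$ explicitly from the twisted modular data provided by the construction in Definition~\ref{def:unit-regular}, show that $\sV_{\lambda,\cN}:=\bigcap_{g\in\cN}U_\lambda(g)\sV_\lambda$ is obtained from $\sV_{\cN}$ by the same controlled modification, and conclude that one is cyclic exactly when the other is, so that the criterion holds for $U_\lambda$ iff it holds for $U$.

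I expect the main obstacle to be precisely the behaviour of the conjugation under the twist. Unlike the modular group, $J=U(\tau_h)$ does not survive the twist: a direct check shows that $U_\lambda$ admits $J$ as an antiunitary extension only when $\lambda=0$, so the correct conjugation $J_\lambda$ — and with it the exact position of $\sV_\lambda$ relative to $\sV$ — must be pinned down through the construction underlying Definition~\ref{def:unit-regular} before cyclicity can be moved across. A secondary point to handle with care is the $g$-dependent phase $\chi_\lambda(g)$ appearing in the translates $U_\lambda(g)\sV_\lambda=\chi_\lambda(g)\,U(g)\sV_\lambda$, which does not cancel in an intersection of real subspaces; here I would use that $\chi_\lambda(g)$ depends only on $t(g)$ and that on a sufficiently small $\cN$ it stays close to $1$, together with the invariance of the modular automorphism group, to show that the intersection remains cyclic. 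Once the identification of $J_\lambda$ is in hand, the unchanged commutation relations and the phase-invariance of $\Ad\circ U$ make the transfer of cyclicity routine.
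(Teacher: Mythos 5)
There is a genuine gap: you correctly reduce to showing that regularity survives a single twist and you compute the modular data correctly (indeed $\Delta_\lambda=e^{-2\pi\lambda}\Delta$; the paper's displayed formula has a sign slip here), but the actual transfer step is only announced, not carried out. Your plan routes through a standard subspace $\sV_\lambda=\sV(h,U_\lambda)$ and a conjugation $J_\lambda$, and you yourself observe that $U_\lambda$ admits no antiunitary extension for $\lambda\neq 0$ --- so these objects do not exist inside $\cH$, and ``pinning down $J_\lambda$'' is not a step that can be completed as stated. The fallback you offer for the phase problem (that $\chi_\lambda(g)$ ``stays close to $1$'' on a small neighborhood) is not a valid argument: a phase near $1$ still moves a real subspace, and the intersection over $g$ involves different phases for different $g$; nothing cancels by a smallness argument.

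The point you are missing is that Definition~\ref{def:unit-regular} is deliberately $J$-free: $h$-regularity of a unitary representation means that $\cD_N$ from \eqref{eq:darel} and $\Delta^{1/2}\cD_N$ are dense, where
\[ \cD_\Omega = \{ \xi \in \cH \: (\forall g \in \Omega)\ U(g)^{-1}\xi \in \cD(\Delta^{1/2})
   \ \mbox{ and }\ \Delta^{1/2} U(g)^{-1} \xi = U(\tau_h(g))^{-1}\Delta^{1/2}\xi\}. \]
The paper's proof simply checks that this set is \emph{identical} for all $U_\lambda$: since $\Delta_\lambda$ is a positive scalar multiple of $\Delta$, the domains $\cD(\Delta_\lambda^{1/2})=\cD(\Delta^{1/2})$ agree; and because $\tau_h$ fixes the $\R$-coordinate, both sides of the intertwining relation for $U_\lambda$ acquire the \emph{same} scalar factor $e^{-\pi\lambda}e^{-\ie\lambda t}$, so the relation holds for $U_\lambda$ iff it holds for $U$. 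Then $\Delta_\lambda^{1/2}\cD_N=e^{-\pi\lambda}\Delta^{1/2}\cD_N=\Delta^{1/2}\cD_N$ as sets, and the equivalence is immediate --- no cyclicity of intersected translates of a standard subspace, and no neighborhood-shrinking, is needed. Your observation that the twists form an $\R$-action (so one implication suffices) is correct but is also rendered unnecessary by this exact cancellation.
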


\begin{prf} We show that, if $\Omega\subeq G$ is a subset containing $e$,
  then the subset 
\[ \cD_\Omega = \{ \xi \in \cH \:
  (\forall g \in \Omega)\quad U(g)^{-1}\xi \in \cD(\Delta^{1/2})
  \ \mbox{ and }  \ 
   \Delta^{1/2} U(g)^{-1} \xi = 
   U(\tau_h(g))^{-1}  \Delta^{1/2} \xi\} \]
 is the same for all representations $U_\lambda$.
 We put
 \[ \Delta_\lambda := e^{2\pi \ie \partial U_\lambda(h)}
   = e^{2\pi \ie \lambda} \Delta.\]
 Then $\cD(\Delta_\lambda^{1/2}) = \cD(\Delta^{1/2})$ for each
 $\lambda$, so that the condition
 $U_\lambda(g)^{-1}\xi \in \cD(\Delta_\lambda^{1/2})$ is
 independent of~$\lambda$. Likewise
 \[    \Delta^{1/2}_\lambda U_\lambda(n,t)^{-1} \xi
   = e^{\pi \ie \lambda} \Delta^{1/2} U(n,t)^{-1} e^{-i\lambda t} \xi 
   =   U_\lambda(\tau_h(n,t))^{-1}  \Delta_\lambda^{1/2} \xi \]
holds for all $\lambda$ if it holds for $\lambda =0$.
\end{prf}

\begin{thm} \mlabel{thm:split-osc-reg}
  All unitary representations of the split oscillator group
  $G = N \rtimes_\alpha \R$ are $h$-regular.
\end{thm}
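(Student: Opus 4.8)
The plan is to reduce $h$-regularity to the Construction Theorem and to verify its hypotheses by producing, for every antiunitary representation of $G_{\tau_h}$, a $G$-cyclic space of tempered $\Xi$-analytic vectors for a crown domain inherited from the embedding $G \subeq \SL_3(\R)$ of \eqref{eq:g*}. Throughout, $h$-regularity is understood via the standard subspace $\sV(h,U)$, so I first fix an antiunitary extension of each unitary representation to $G_{\tau_h}$ with $J = U(\tau_h)$ (available after the usual doubling $\cH \oplus \overline{\cH}$, which does not affect the conclusion); this places us in the framework of Sections~\ref{sec:2} and~\ref{sec:4}. By Theorem~\ref{thm:reg-net} it then suffices, for each such representation, to exhibit an open $W \subeq G$ carrying a net with (Iso), (Cov), (RS) and (BW); and by Theorem~\ref{thm:4.9} this follows once we know that $\cH^\omega(\Xi) \cap \cH^J_{\rm temp}$ is $G$-cyclic.

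Next I would build the crown domain. Since $h$ is an Euler element of the semisimple group $G^* = \SL_3(\R)$ and $V = \R^3$ is the associated $2$-graded module, Theorem~\ref{thm:gss} provides the semisimple crown $\Xi_{\SL_3(\C)}$, and Lemma~\ref{lem:Mxi}, applied to the $\SL_3(\C)$-action on a suitable orbit in $\C^3$, pulls it back to a crown domain $\Xi \subeq G_\C$ satisfying (Cr1--3). The reason to use $\SL_3$ rather than the manifestly available but too-large domain $N_\C\exp(\cS_{\pm\pi/2}h)$ is exactly the lesson of Section~\ref{sec:3}: by Proposition~\ref{prop:22May2024_new} the large domain forces $\cH^\omega(\Xi)\cap\cH^J_{\rm temp}$ into $\bigcap_{x\in\g_1\cup\g_{-1}}\ker\partial U(x)$ and thus admits no cyclic subspace once $\g_{\pm1}\not\subeq\ker\dd U$, whereas the smaller $\SL_2$-type crown behaved well (Propositions~\ref{sharp_prop}(b) and~\ref{v1}).

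The core is then to show $\cH^\omega(\Xi)\cap\cH^J_{\rm temp}$ is $G$-cyclic. I would decompose by the central character of $Z = \exp(\R z)$ and, using Lemma~\ref{lem:twist-osci} to absorb twists by characters of $G/\Heis(\R^2)$, reduce to two regimes. In the degenerate regime, where the representation is trivial on $Z$, it factors through $\R^2\rtimes\R$ and is governed by the affine subgroups $\R q\rtimes\R h,\ \R p\rtimes\R h \cong \Aff(\R)_e$; here Proposition~\ref{sharp_prop}(b) together with the $\SL_2(\C)$-crown picture of Proposition~\ref{v1} supplies a dense set of tempered $\Xi$-analytic vectors. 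In the generic regime, where $Z$ acts by a nontrivial character $\ee^{\ie\hbar z}$, the Stone--von Neumann theorem realizes the representation on a Schrödinger/Fock model on which $\exp(th)$ is implemented by the metaplectic generator of $\diag(\ee^{t},\ee^{-t})\in\SL_2(\R)$; the representation therefore extends to the Jacobi group $\Heis(\R^2)\rtimes\widetilde{\SL_2(\R)}$, whose crown structure combines the $\SL_2(\C)$-crown of Theorem~\ref{thm:gss} with the Heisenberg Olshanski/Fock data, and I would transport coherent states (or $K$-finite vectors) across this crown, verifying the growth bound \eqref{eq:growthcond} in the explicit model. Fiberwise density over the central parameter then assembles, by a measurable-field argument, into a total subset of $\cH^\omega(\Xi)\cap\cH^J_{\rm temp}$, so that Theorem~\ref{thm:4.9} produces the net and Theorem~\ref{thm:reg-net} yields $h$-regularity.

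The main obstacle I anticipate is the generic regime. Because $h$ is split, $\partial U(h)$ has spectrum all of $\R$, so $U_h(t)=U(\exp th)$ admits no contraction-semigroup continuation and the naive strip extension fails outright (this is precisely why $\Xi_1$ is fatal in Section~\ref{sec:3}); the holomorphic extension of the orbit maps to $\Xi$ must genuinely exploit the $\SL_2$-M\"obius geometry via the metaplectic/Jacobi realization. Matching the domain of convergence of these orbit maps with the $\SL_3$-pullback crown $\Xi$, while controlling the temperedness estimate \eqref{eq:growthcond} uniformly in $\hbar$, is the delicate point; a secondary nuisance is the measurability and uniform-neighbourhood bookkeeping needed to pass from fiberwise density to $G$-cyclicity of the whole representation.
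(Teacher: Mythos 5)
Your overall strategy is legitimate in outline---Theorem~\ref{thm:reg-net} does reduce $h$-regularity to the existence of a net, and Theorem~\ref{thm:4.9} reduces that to $G$-cyclicity of $\cH^\omega(\Xi)\cap\cH^J_{\rm temp}$---but the step you defer is precisely the one that would have to be proved. In the generic regime (nontrivial central character) you write that you ``would transport coherent states \dots verifying the growth bound \eqref{eq:growthcond} in the explicit model,'' and you yourself flag matching the domain of convergence of the orbit maps with the $\SL_3$-pullback crown, uniformly in the central parameter, as ``the delicate point.'' That is the entire content of the claim in this regime: neither your argument nor the paper establishes that the Schr\"odinger representations, with the dilation implemented metaplectically, admit a $G$-cyclic space of tempered $\Xi$-analytic vectors for any crown domain of the split oscillator group. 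The paper's closing remark of Section~\ref{sec:5} only observes that the embedding $G\into\SL_3(\R)$ \emph{could} be used to find a crown domain; it does not verify the hypotheses of Theorem~\ref{thm:4.9} for it. A secondary gap is your final ``measurable-field argument'': analytic vectors (let alone tempered $\Xi$-analytic ones) are not stable under direct integrals, so fiberwise density does not assemble into a cyclic subspace of $\cH^\omega(\Xi)\cap\cH^J_{\rm temp}$ for the total representation; the paper's Lemma~\ref{lem:g-intera}(c) operates at the level of density of the domains $\cD_N$, not at the level of analytic vectors.

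The paper's actual proof avoids crown domains entirely and is much shorter. It takes the spherical principal series of $\SL_3(\R)$ on $L^2(N)$, which is $h$-regular by \cite[Cor.~4.24]{MN24} since $\SL_3(\R)$ is reductive, and restricts it to $G=N\rtimes\exp(\R h)\subeq MAN$; restriction to a subgroup containing $\exp(\R h)$ preserves $h$-regularity because $\sV_{N'\cap G}\supeq \sV_{N'}$ for an identity neighborhood $N'$. This restriction is a direct integral of Schr\"odinger representations $U_\chi$, $\chi\in\R^\times$, so almost all $U_\chi$ are $h$-regular by \cite[Lemma~C.1]{MN24}, hence all of them after conjugating by $A$; Lemma~\ref{lem:twist-osci} then covers the twists $U_{\chi,\lambda}$, which by metaplectic factorization exhaust the irreducibles nontrivial on $Z(N)$. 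Representations trivial on $Z(N)$ factor through $\R^2\rtimes\R$ and are handled by \cite[Prop.~4.15]{MN24}, and general representations follow by direct integral decomposition. If you want to salvage your route, you must actually carry out the holomorphic extension of the orbit maps and the estimate \eqref{eq:growthcond} in the Fock/Schr\"odinger model and then bypass the direct-integral issue (e.g.\ by arguing at the level of the regularity condition as the paper does); as it stands the proposal is a program, not a proof.
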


\begin{prf} Consider a spherical principal series representation 
  of $G^* = \SL_3(\R)$ on $\cH = L^2(K/M) \cong L^2(N)$.
Then $N$ acts by the left regular
representation and $A$ acts by automorphisms on $N$.
This representation of $\SL_3(\R)$ is $h$-regular by \cite[Cor. 4.24]{MN24},
so that its restriction to $G$ is $h$-regular as well. 
As a representation of $G$, it
decomposes as a direct integral of Schr\"odinger representations 
$U_{\chi}$ for parameters $\chi \in \R^\times
\subeq \R \cong \hat{Z(N)}$, where we use that
the action of $\exp(\R h)$ on $A$ by automorphisms can be implemented
in a canonical way by the metaplectic representation of
$\tilde\Sp_2(\R)$,
in the Schr\"odinger representation of $N$ on~$L^2(\R)$. 
So, by \cite[Lemma C.1]{MN24}, almost all these representations are
$h$-regular. As they are conjugate under outer automorphisms induced
by~$A$, all these representations are $h$-regular.
By metaplectic factorization (\cite[\S IX.4]{Ne00}),
it follows that every irreducible representation of $G$ whose restriction to
$Z(N)$ is non-trivial is isomorphic to a twist
$U_{\chi,\lambda}$, $\lambda \in \R$, in the sense of Lemma~\ref{lem:twist-osci}.
We conclude that  all these representations are $h$-regular as well.

Irreducible representations that are trivial 
 on $Z(N)$ are representations of the quotient group
 $\R^2 \rtimes \R$ whose representations are $h$-regular
 by \cite[Prop.~4.15]{MN24}.
So all irreducible representations of $G$
 are $h$-regular. Now the $h$-regularity of all unitary
 representations of $G$ follows from the existence of a 
  direct integral decomposition  and~\cite[Lemma~4.4]{MN24}.
\end{prf}

\subsubsection*{Euler elements of the split oscillator Lie algebra}

The following proposition shows that the set $\cE(\g)$
of all  Euler elements of the split oscillator Lie algebra $\g$ is the disjoint union of two parallel affine hyperplanes in~$\g$. 
This should be compared with the classification of Euler elements in simple Lie algebras in \cite[Subsect. 3.3]{MN21}.

\begin{prop}
	\label{prop:split-E}
For the split oscillator Lie algebra $\g$, the set $\cE(\g)$
is the disjoint union of 
the hyperplanes 
	\[  \pm h + [\g,\g] = \pm h + \heis(\R^2).\] 
\end{prop}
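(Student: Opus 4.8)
The plan is to characterize Euler elements of $\g = \heis(\R^2) \rtimes \R h$ directly from the defining bracket relations \eqref{eq:osc-def}, using that an Euler element $E$ must satisfy $(\ad E)^3 = \ad E \neq 0$ together with diagonalizability of $\ad E$ with spectrum in $\{1,0,-1\}$. Write a general element as $E = \mu z + \beta q + \gamma p + \alpha h$ with $\alpha,\beta,\gamma,\mu \in \R$. First I would compute $\ad E$ on the ordered basis $(z,q,p,h)$, using $[z,\cdot] = 0$, $[q,p]=z$, $[h,q]=q$, $[h,p]=-p$, $[h,z]=0$. This gives
\[ \ad E(q) = \alpha q - \gamma z, \quad \ad E(p) = -\alpha p + \beta z,
   \quad \ad E(z) = 0, \quad \ad E(h) = -\beta q + \gamma p. \]

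The key structural observation is that $z$ spans the center and $\ad E$ maps everything into $\heis(\R^2) = \Spann\{z,q,p\}$, with the $h$-component of the image always zero; moreover the image of $\ad E$ lands in $\Spann\{z,q,p\}$ and $z$ is always in the kernel. The next step is to read off the characteristic behaviour: on the invariant plane $\Spann\{q,p\}$ modulo $z$, the induced map is $\diag(\alpha,-\alpha)$, so the nonzero eigenvalues of $\ad E$ are forced to be $\pm\alpha$ (when $\alpha \neq 0$). For $E$ to be an Euler element we need these to lie in $\{\pm 1\}$, forcing $\alpha = \pm 1$; the case $\alpha = 0$ must be excluded by checking that $(\ad E)^3 = \ad E$ fails (one finds $\ad E$ becomes nilpotent, so $\ad E \neq 0$ but $(\ad E)^3 = 0 \neq \ad E$, contradicting the Euler condition). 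Thus $\alpha = \pm 1$.

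With $\alpha = \pm 1$ fixed, the remaining task is to verify that \emph{every} such $E = \pm h + \mu z + \beta q + \gamma p$ is genuinely an Euler element, i.e.\ that $\ad E$ is diagonalizable with spectrum exactly $\{1,0,-1\}$. I would confirm this by exhibiting eigenvectors explicitly: $z$ spans the $0$-eigenspace contribution, and one solves for eigenvectors of eigenvalue $\pm\alpha$ of the form $q + c z$ and $p + c' z$ by absorbing the $\gamma z$ and $\beta z$ correction terms (possible precisely because $\alpha \neq 0$, so $\ad E \mp 1$ is invertible on $\Spann\{z,q\}$ resp.\ $\Spann\{z,p\}$). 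This yields a full eigenbasis, confirming diagonalizability and that $\mu,\beta,\gamma$ are unconstrained. Hence $\cE(\g) = (h + \heis(\R^2)) \sqcup (-h + \heis(\R^2))$, and these two affine hyperplanes are disjoint since $\heis(\R^2) = [\g,\g]$ and $h \notin [\g,\g]$ (the $h$-coordinate distinguishes $\pm h$).

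The main obstacle I anticipate is not any single hard computation but rather the bookkeeping of the degenerate case $\alpha = 0$: here $\ad E$ sends $q \mapsto -\gamma z$, $p \mapsto \beta z$, $h \mapsto -\beta q + \gamma p$, and one must check carefully that no choice of $\beta,\gamma$ produces a diagonalizable map with spectrum in $\{\pm 1\}$. The cleanest argument is that the image of $\ad E$ then lies in $\Spann\{z,q,p\}$ with $\ad E(z)=0$ and $(\ad E)^2(h) = -\beta(-\gamma z) + \gamma(\beta z) = 2\beta\gamma\, z$, after which $(\ad E)^3(h)=0$; a short direct check shows $(\ad E)^3 = 0$ whenever $\alpha = 0$, so the Euler condition $(\ad E)^3 = \ad E \neq 0$ cannot hold. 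Once this degenerate case is dispatched, the rest is the routine eigenvector construction described above.
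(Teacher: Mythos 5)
Your proof is correct, but it takes a genuinely different route from the paper's. You argue entirely inside $\g$ by writing a general element $E=\mu z+\beta q+\gamma p+\alpha h$, computing the matrix of $\ad E$, forcing $\alpha=\pm1$ (with the case $\alpha=0$ excluded because $(\ad E)^3=0$ there), and then exhibiting an eigenbasis. The paper instead passes to the quotient $\fq:=\g/\fz(\g)\cong\R^2\rtimes\so_{1,1}(\R)$ (the two-dimensional Poincar\'e algebra), uses that the adjoint orbit of $h_\fq$ is the affine plane $h_\fq+[h_\fq,\fq]=h_\fq+\R^2$, and concludes that every element of $\pm h+\heis(\R^2)$ is conjugate in $\g$ to an element of $\pm h+\fz(\g)$, which is visibly Euler because $z$ is central; the converse reduces to the known description of Euler elements of $\fq$. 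Your version is more elementary and self-contained (no appeal to adjoint orbits or to the Poincar\'e algebra), at the price of explicit matrix bookkeeping; the paper's version is shorter and explains conceptually why the answer is a union of affine hyperplanes. Two small points to tighten in your write-up: (1) with $\alpha=\pm1$ the eigenvectors you name ($z$, $q-\alpha^{-1}\gamma z$, $p-\alpha^{-1}\beta z$) are only three of the four needed; diagonalizability also requires the corrected element $h+\alpha^{-1}(\beta q+\gamma p)\in\ker(\ad E)$, for which one checks that the $z$-component cancels (it does: $\alpha^{-1}\gamma\beta-\alpha^{-1}\beta\gamma=0$). (2) The phrase ``$\ad E\mp1$ is invertible on $\Spann\{z,q\}$'' is not literally correct (that operator annihilates the eigenvector you are constructing); what you actually use is that $0-\alpha\neq0$ on the line $\R z$, which is what lets you absorb the $z$-correction terms.
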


\begin{prf} Consider the quotient Lie algebra
        $\fq := \g/\fz \cong \R^2 \rtimes \so_{1,1}(\R)$
        (the Poincar\'e algebra  in dimension~$2$) and write
        $h_\fq$ for the image of $h$ in $\fq$. Then the adjoint
        orbit $\cO^\fq_{h_\fq}$ of $h_\fq$ satisfies 
\[ \cO^\fq_{h_\fq} = h_\fq + [h_\fq, \fq] = h_\fq +  (\R^2 \oplus \{0\}). \]
Therefore all elements in $\pm h + \heis(\R^2)$ are
        Euler elements because their image in $\fq$ is conjugate to
        $h_\fq$ or $-h_\fq$. That these are all non-central Euler elements
        follows from the description of Euler elements in $\fq$
        and $\pm h + \fz \subeq \cE(\g)$.
\end{prf}

The preceding proposition is in strong contrast
with the semisimple case, where the set of Euler elements
has a more complicated, curved structure.

\begin{rem} Combining the preceding argument with the observation
  that the additive group $\Hom(\g/[\g,\g],\fz)$ acts by
  automorphisms  of the form $\alpha.x := x + \alpha(x)$ on $\g$
  implies that the group $\Aut(\g)$ acts transitively on the
  set $\cE(\g)$ of all 
  Euler elements in $\g$.

  On the group level, this implies that Theorem~\ref{thm:split-osc-reg}
  also holds for all other 
  Euler elements
  $h' \in \cE(\g)$. 
\end{rem}

\begin{rem} If $h_1 = h + z$ with $z \in \fz(\g)$, then
  $\tau_{h_1}^\g = \tau_h^\g$ implies that the group $G_{\tau_h}$
  does not depend on the choice of the Euler element $h$.
  For the associated modular operator we then have
  \[ \Delta_1 = \Delta e^{2\pi \ie \partial U(z)},\]
  with $J e^{i \partial U(z)} J = e^{-i \partial U(z)}$, so that
  \[ \sV_1 := \Fix(J (\Delta_1)^{1/2})
    = \Fix(e^{-\frac{\pi \ie}{2} \partial U(z)} J
    \Delta e^{\frac{\pi \ie}{2} \partial U(z)})
    = e^{-\frac{\pi \ie}{2} \partial U(z)}\sV,\]
    and, for every $e$-neighborhood $N \subeq G$, we have
$ \sV_{1,N} = e^{-\frac{\pi \ie}{2} \partial U(z)} \sV_N.$ 
      This shows that $U$ is $h$-regular if and only if it is
      $h'$-regular.   
\end{rem}

\begin{rem}
We can also use the realization of $G$ in $G^* := \SL_3(\R)$ to
find a crown domain for~$G$. To this end, we consider
the non-compactly causal symmetric space
\[ M^* := G^*/H^* = \{ g B g^\top \: g \in G^*\}, \quad
  B := \diag(-1,1,-1).\]
It consists of all symmetric matrices $A \in \Sym_3(\R)$ with
$\det(A) = 1$ and signature $(1,2)$.
Accordingly,
\[  M^*_\C \cong \{ A \in \Sym_3(\C) \: \det(A) = 1\}.\]

As the stabilizer algebra
\[ \fh^*  =  \left\{ \pmat{ 0 & a & b \\ a & 0 & c \\ - b & c & 0}
  \: a,b,c \in \R \right\} \cong \so_{1,2}(\R) \]
intersects $\g$ (and even $\fp$) trivially, 
the orbit of the base point yields an open embedding
$G \into M^*$. 
\end{rem}

\section{Perspectives and open problems}

An interesting open problem in the context of this paper
is the regularity conjecture:

\begin{conj} If $G$ is a simply connected Lie group and $h \in \g$ an
  Euler elements, then every antiunitary representation
  of $G_{\tau_h}$ is $h$-regular, i.e., there exists an $e$-neighborhood $N \subeq G$ for which $\sV_N := \bigcap_{g \in N} U(g)\sV(h,U)$ is cyclic.
\end{conj}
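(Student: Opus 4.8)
The plan is to route the conjecture through Theorem~\ref{thm:reg-net}, which turns $h$-regularity of $(U,\cH)$ into the \emph{existence} of a net satisfying (Iso), (Cov), (RS) and (BW) on some open $W\subeq G$. By Theorem~\ref{thm:4.9} such a net is produced the moment one has a crown domain $\Xi$ for $(G,h)$ together with a $G$-cyclic subspace $\sF\subeq\cH^\omega(\Xi)\cap\cH^J_{\rm temp}$, so the whole problem reduces to a single analytic assertion: \emph{every antiunitary representation admits a crown whose tempered $J$-fixed $\Xi$-analytic vectors are $G$-cyclic.} First I would carry out the assembly step, reducing to irreducible $(U,\cH)$: a direct integral decomposition together with stability of $h$-regularity under direct integrals (\cite[Lemma~4.4]{MN24}) lets one build the general case from the irreducible fibres, and for irreducible $U$ any nonzero subspace of $\cH^\omega(\Xi)\cap\cH^J_{\rm temp}$ is automatically $G$-cyclic (as noted after Theorem~\ref{thm:4.9}). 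Thus the target collapses to $\cH^\omega(\Xi)\cap\cH^J_{\rm temp}\neq\{0\}$ for one well-chosen $\Xi$ per irreducible.

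The second step is to select $\Xi$ from the structure of $\g$ and of the positive cone $C_U=\{x\in\g:-\ie\,\partial U(x)\geq 0\}$. When $C_U$ contains a pointed generating $\Ad(G)$-invariant cone $C$ with $-\tau_h(C)=C$, I would invoke the complex Olshanski semigroup $S=M\exp(\ie C^\circ)$ of Example~\ref{ex:olshanski}, where the argument around \eqref{eq:temp-xi-1} already yields the required density. When $h$ lies in a semisimple Levi factor, Theorem~\ref{thm:gss} supplies the crown through the Kr\"otz--Stanton extension. The general tactic is to interpolate between these using the Levi decomposition $\g=\fr\rtimes\fs$ and the $3$-grading $\g=\g_{-1}\oplus\g_0\oplus\g_1$: realize $U$, along a crowned-group embedding $G\into G_2$, as the restriction of a representation of a larger group $G_2$ whose crown is of Olshanski or semisimple type, and then transport a cyclic $\sF$ back to $G$ by the pullback construction (Corollary~\ref{cor:pullback}). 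The split oscillator treatment of Section~\ref{sec:5}, where $G\into\SL_3(\R)$ and temperedness is inherited from a principal series, is the prototype of this ``embed and restrict'' move.

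The hard part, and the reason the statement is still a conjecture, is exactly the analytic input of the third step: ensuring $\cH^\omega(\Xi)\cap\cH^J_{\rm temp}\neq\{0\}$ when \emph{neither} an invariant cone inside $C_U$ \emph{nor} a semisimple handle on $h$ is present. The affine group is the cautionary model (Theorem~\ref{thm:hardy}, Proposition~\ref{sharp_prop}): the naive large crown $\Xi_1$ forces $\cH^\omega(\Xi_1)\cap\cH^J_{\rm temp}=\{0\}$, and only the smaller, essentially symmetric-space crown $\Xi_2$ survives. So the decisive difficulty is to manufacture, for an arbitrary simply connected $G$, a crown that is at once \emph{small enough} for the tempered boundary values to exist (the growth bound \eqref{eq:growthcond}) and \emph{large enough} for $\cH^\omega(\Xi)$ to meet $\cH^J_{\rm temp}$ nontrivially. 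This is a boundary-value theorem for orbit maps valid beyond the Olshanski and Kr\"otz--Stanton regimes, and it is precisely the piece of general harmonic analysis that is currently missing.

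As a hedge against the absence of a uniform crown, I would run in parallel a local, model-free attack based on the Remark following Theorem~\ref{thm:reg-net}: $v\in\sH(\cO)$ holds iff the orbit map $U^v$ carries an $e$-neighborhood into $\sV$, i.e.\ $(U^v)^{-1}(\sV)\subeq G$ has interior. The idea is to control $U(g)\sV$ for $g$ near $e$ directly from the modular data, using $[h,\g_{\pm 1}]=\pm\g_{\pm 1}$ together with the KMS and temperedness package (Proposition~\ref{prop:temp1}, Theorem~\ref{thm:closed-hyperfunc}) to continue $J$-fixed tempered vectors analytically along $\exp(\g_{\pm 1})$. If this continuation can be shown to respect the standard subspace for small parameters, it would produce a cyclic $\sV_N$ with no global crown at all, and seems to me the most plausible route to the fully general case.
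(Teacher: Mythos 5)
The statement you are addressing is presented in the paper as an \emph{open conjecture}: the authors prove it only for reductive groups, for the classes reachable by the Olshanski-semigroup and Kr\"otz--Stanton machinery, and (in Section~\ref{sec:5}) for the split oscillator group; there is no general proof in the paper to compare against. Your proposal is therefore a research program rather than a proof, and you say as much yourself. The reduction you set up --- $h$-regularity follows from the existence of a net satisfying (Iso), (Cov), (RS), (BW) by Theorem~\ref{thm:reg-net}, which in turn follows from a crown $\Xi$ admitting a $G$-cyclic $\sF \subeq \cH^\omega(\Xi)\cap\cH^J_{\rm temp}$ by Theorem~\ref{thm:4.9} --- is exactly the strategy the paper advocates, but the decisive step, producing such a crown for an arbitrary antiunitary representation of an arbitrary simply connected $G$, is precisely the open problem. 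Your second and third paragraphs supply no argument for it beyond the two regimes the paper already covers, so the proof has a genuine, and acknowledged, gap at its core. Note also that the crown route is only sufficient: Theorem~\ref{thm:reg-net} is an equivalence with the existence of nets, not with the existence of crowns (this is the content of Problem~6.2), so even a negative answer about crowns would not settle the conjecture.

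Two further concrete problems. First, your assembly step is not as routine as stated: $h$-regularity of a direct integral requires the subspaces $\cD_N$ and $\Delta^{1/2}\cD_N$ to be dense in $\mu$-almost every fibre for a \emph{single} identity neighborhood $N$ (Lemma~\ref{lem:g-intera}(c)); fibre-wise regularity with fibre-dependent neighborhoods does not suffice. In the split oscillator case the paper obtains this uniformity only because the fibre representations are mutually conjugate under the outer automorphisms induced by $A$, combined with the twisting argument of Lemma~\ref{lem:twist-osci}; for a general $G$ you would need a separate uniformity argument. Second, your ``hedge'' in the last paragraph --- continuing $J$-fixed tempered vectors along $\exp(\g_{\pm 1})$ and showing that the continuation respects $\sV$ for small parameters --- is essentially a restatement of what must be proved (compare Proposition~\ref{prop:4.8}(d) and Lemma~\ref{lem:2.16}(d)), not a method for proving it; the affine-group analysis of Section~\ref{sec:3} shows that such continuations can fail to be tempered even when $\cH^\omega(\Xi)$ is dense, so this step cannot be taken for granted.
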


In \cite{MN24} one finds various sufficient conditions
for representations to be $h$-regular and the split oscillator group
was the smallest example for which none of these conditions were satisfied.
A~natural next step would be to verify the conjecture for all solvable
Lie groups.

\begin{prob} Suppose that, for an antiunitary representation
  $(U,\cH)$ of $G_{\tau_h}$, a net $\sH$ satisfying (Iso), (Cov), (RS) and
  (BW) exists. Does this imply the existence of a triple 
  $(G, \Xi, h)$ satisfying (Cr1-3) such that
  $\cH^\omega(\Xi) \cap \cH^J_{\rm temp}$ is $G$-cyclic? 
\end{prob}

\begin{prob} Suppose that, for an antiunitary representation
$(U,\cH)$ of $G_{\tau_h}$, we have a 
cyclic vector $v \in \cH^\omega \cap  \cH^J_{\rm temp}$.
Then, for some sufficiently small $\Xi_0$ of the form
$\Xi_0 = G \exp(\ie\Omega_0)$, $\Omega_0 \subeq \g$ an open convex
$0$-neighborhood, we have
$v \in \cH^\omega(\Xi_0)$. Is it possible to enlarge
$\Xi_0$ to a crown $\Xi$ of $G$ with $v \in \cH^\omega(\Xi)$?

This a question about ``minimal Riemann domains over $G_\C$''
to which $G$-orbit maps extend.
We would like to extend to
$\exp(\cS_{\pm \pi/2}h) \Xi_0^{\oline\tau_h}$ by
\[ U^v(\exp(zh) p) = e^{z \partial U(h)} U^v(p). \]
This should be a vector with the property that, for
$w \in \cH^{\omega}_{U_h}(\cS_{\pm \pi/2})$, it satisfies
\[ \la w, U^v(\exp(zh) p) \ra
  = \la e^{-\oline z \partial U(h)}w,  U^v(p) \ra 
  = \la U^w(-\oline z),  U^v(p) \ra . \]
This suggests to consider orbit maps with values in
the dual space of the dense Fr\'echet subspace
$\cH^{\omega}_{U_h}(\cS_{\pm \pi/2}) \subeq \cH$. 
\end{prob}

\begin{prob}
To study locality phenomena in AQFT, i.e., pairs of subspaces
$\sH(\cO_1)$ and $\sH(\cO_2)$ with 
$\sH(\cO_1) \subeq \sH(\cO_2)'$, which is the key property to
relate our abstract constructions to Physics, one
needs Euler elements $h \in \g$ which are symmetric in the sense that
$-h \in \Ad(G)h$. Then $h \in [\g,\g]$, and this never
happens if $\g$ is solvable. Therefore one always needs non-trivial
Levi complements containing symmetric Euler elements 
(\cite[Prop.~3.2]{MN21}).
In \cite{MNO25} and \cite{NO25}, first steps are made for semisimple
groups, but it also remains to be explored for which ``mixed groups'',
such as $G = R \rtimes \SL_2(\R)$  with a solvable group $R$,
similar structures exist.
An important mixed example is the Poincar\'e group
$G = \R^{1,d} \rtimes \SO_{1,d}(\R)_e$. 
\end{prob}

\appendix

\section{Spaces of smooth and analytic vectors}
\mlabel{app:c} 

\subsection{The space of analytic vectors and its dual} 

In this subsection we briefly discuss the space of analytic
vectors of a unitary representation of a Lie group.
Let $(U,\cH)$ be a unitary representation of
the connected real Lie group $G$. We write
\[ \cH^\omega = \cH^\omega_U\subeq \cH \]
for the space of {\it analytic vectors}, i.e.,
those $\xi\in \cH$ for which the orbit map
$U^\xi \colon G \to \cH, g \mapsto U(g)\xi$,  is analytic.

To endow $\cH^\omega$ with a locally convex topology,
we specify subspaces $\cH^\omega_{V}$ by open convex  {$0$-neighborhoods}
$V \subeq \g$ as follows. Let $\eta_G \: G \to G_\C$ be the
  universal complexification of $G$ and recall our assumption that
  $\ker(\eta_G)$ is discrete.
We assume that $V$ is so small that the map
\begin{equation}
  \label{eq:eta-g-v}
 \eta_{G,V} \colon G_V := G \times V
 \to G_\C, \quad (g,x) \mapsto \eta_G(g) \exp(\ie x)
\end{equation}
is a covering. Then we endow $G_V$ with the unique complex manifold
structure for which $\eta_{G,V}$ is holomorphic.
We now write $\cH^\omega_V$ for the set of those analytic vectors
$\xi$ for which the orbit map $U^\xi\colon G \to \cH$ extends to a
holomorphic map
\[ U^\xi_V \colon G_V\to \cH.\]
As any such extension is $G$-equivariant by 
uniqueness of analytic continuation, it must have the form
\begin{equation}
  \label{eq:orb-map}
 U^\xi_V(g,x) = U(g) \ee^{\ie \partial U(x)} \xi \quad \mbox{ for }  \quad
 g \in G, x \in V,
\end{equation}
so that
$\cH^\omega_V \subeq \bigcap_{x \in V} \cD(\ee^{\ie \partial U(x)}).$ 
By \cite[Lemma~1]{FNO25a}, we actually  have equality.

We topologize the space
$\cH^\omega_V$ by identifying it with 
$\cO(G_V, \cH)^G$, the Fr\'echet space of $G$-equivariant holomorphic maps
$F \colon G_V \to \cH$, endowed with the Fr\'echet topology of
  uniform convergence on compact subsets. Now
$\cH^\omega = \bigcup_V \cH^\omega_V$, 
and we topologize $\cH^\omega$ as the locally convex direct limit
of the Fr\'echet spaces $\cH^\omega_V$.
If  $\eta_G \colon  G \to  G_\C$
  is injective, it is easy to see that
  we thus obtain the same topology as in \cite{GKS11}. Note that,
for any monotone
basis  $(V_n)_{n \in \N}$ of convex $0$-neighborhoods in $\g$, we
then have
\[ \cH^\omega \cong \indlim \cH^\omega_{V_n},\]
so that $\cH^\omega$ is a countable locally convex limit of
Fr\'echet spaces. As the evaluation maps
\[ \cO(G_V, \cH)^G \to \cH, \quad F \mapsto F(e,0) \] 
are continuous, the inclusion
$\iota \colon \cH^\omega \to \cH$ 
is continuous. 

We write $\cH^{-\omega}$ for the space  of continuous antilinear functionals 
$\eta\colon \cH^\omega \to \C$ (called {\it hyperfunction vectors})
and
\[ \la \cdot, \cdot \ra \colon \cH^\omega \times \cH^{-\omega} \to \C \]
for the natural sesquilinear pairing that is linear in the second argument.
We endow $\cH^{-\omega}$ with the weak-$*$ topology. 
We then have natural continuous inclusions
\[ \cH^\omega \into \cH \into \cH^{-\omega}.\] 

Our specification of the topology on $\cH^\omega$
differs from the one 
\cite{GKS11} because we do not want to assume that
the $\eta_G \colon G \to G_\C$ is injective, 
but both constructions define the same topology.
Moreover,  the arguments in \cite{GKS11} apply with minor changes to
general Lie groups.

\subsection{The space of smooth vectors and its dual}

The subspace  
$\cH^\infty \subeq \cH$ of vectors $v \in \cH$ for which the orbit map 
$U^v \colon G \to \cH, g \mapsto U(g)v,$  is smooth 
({\it smooth vectors}) is dense
and carries a natural Fr\'echet topology for which the action of 
$G$ on this space is smooth (\cite{Ne10}).
We have a representation of $\fg$ on $\cH^\infty$ given by
$\dd U(x)v= \frac{d}{dt}\big|_{t=0}U(\exp tx)v$.
The topology is defined by the seminorms $\| \dd U(x)u\|,
x \in \cU(\g)$. The space of {\it distribution vectors},
denoted by $\cH^{-\infty}$, is the conjugate linear dual of $\cH^\infty$.
The elements of $\cH^{-\infty}$ are called
{\it distribution vectors}. It contains in particular the functionals 
$\la \cdot, v \ra$, $v \in \cH$.
Here we follow the convention common in physics to
require inner products to be conjugate linear in the first and 
complex linear in the second argument. 
We thus obtain complex linear embeddings 
\begin{equation}
  \label{eq:incl4}
\cH^{\omega} \subeq \cH^\infty \subeq \cH \subeq \cH^{-\infty}
\subeq \cH^{-\omega},
\end{equation}
where all inclusions are continuous
and $G$ acts on all spaces 
by representations denoted, 
\[ U^\omega, \quad U^\infty, \quad U, \quad U^{-\infty} \quad
  \mbox{ and } \quad U^{-\omega},\] respectively.

All of the three above  representations can be integrated to the
convolution algebra $C^\infty_c(G) := C^\infty_c(G,\C)$ of
test functions, for instance
\[ U^{-\infty}(\phi) := \int_G \phi(g)U^{-\infty}(g)\, dg,\] 
where $dg$ stands for a left Haar measure on $G$.

\section{Regularity for unitary representations}

Let $(U,\cH)$ be a unitary representation of $G$ and
$\oline\cH$ the complex Hilbert space obtained from $\cH$
by flipping the complex structure. On the space
$\tilde\cH := \cH \oplus \oline\cH$ 
we consider the antiunitary representation $(\tilde U, \tilde \cH)$,
defined by
\[ \tilde U\res_G := U \oplus (\oline U \circ \tau_h)
  \quad \mbox{ and }  \quad  \tilde J(v,w) = (w,v),\]
where $\oline U$ is the representation obtained from $U$ by
flipping the complex structure on~$\cH$.
Then
\[  \tilde \Delta := e^{2\pi \ie \partial \tilde U(h)}
  = \Delta \oplus \Delta^{-1}
   \quad \mbox{ for } \quad
    \Delta := e^{2\pi \ie \partial U(h)},  \]
and the corresponding standard subspace is given by
\[ \tilde \sV := \sV(h,\tilde U) = \Gamma(\Delta^{1/2})
  = \{ (v, \Delta^{1/2}v) \: v \in \cD(\Delta^{1/2})\}
= \Fix(\tilde J \tilde\Delta^{1/2}) \]
(cf.~\cite[Lemma~2.22]{MN24}).

For $g_0 \in G$ and $\eset \not=\Omega \subeq G$,
the subspace $\tilde \sV_{g_0\Omega} = U(g_0)\tilde\sV_\Omega$
is cyclic if and only if $\sV_\Omega$ is cyclic.
We therefore {\bf assume from now on
  that $e \in \Omega$.} We consider the subspace
\[  \cD_\Omega
  := \{ \xi \in \cD(\Delta^{1/2}) \:
  (\forall g \in \Omega)\ \tilde U(g)^{-1}(\xi,\Delta^{1/2}\xi) \in
  \Gamma(\Delta^{1/2})\}
  = \{  \xi \in \cD(\Delta^{1/2}) \: (\xi, \Delta^{1/2}\xi) \in \tilde \sV_\Omega\}\]
for which 
\[ \tilde\sV_\Omega = \{ (\xi, \Delta^{1/2} \xi) \: \xi \in \cD_\Omega\}.\]  
We observe that
\begin{equation}
  \label{eq:darel}
  \cD_\Omega = \{ \xi \in \cH \:
  (\forall g \in \Omega)\quad U(g)^{-1}\xi \in \cD(\Delta^{1/2})
  \ \mbox{ and }  \ 
   \Delta^{1/2} U(g)^{-1} \xi = 
 U(\tau_h(g))^{-1}  \Delta^{1/2} \xi\}, 
\end{equation}
which only involves the unitary representation $U$. 

\begin{rem} \mlabel{rem:bgl02}
For an antiunitary representation $(U,\cH)$ of $G_{\tau_h}$, 
the method of proof of 
\cite[Prop.~4.1/2]{BGL02} implies 
\begin{equation}
 \mlabel{eq:bgl02}
\cD_\Omega = \sV_\Omega + \ie \sV_\Omega,
\end{equation}
so that the subspace $\cD_\Omega$ is dense in $\cH$ if and only if
$\sV_\Omega$ is cyclic. As $\Gamma(\Delta^{1/2})$ is closed,
the definition of $\cD_\Omega$ also implies that
\[ \cD_{\oline \Omega} = \cD_\Omega \quad \mbox{ for } \quad e\in \Omega \subeq G.\] 
\end{rem}

The main advantage of the subspace $\cD_\Omega$ is that it only requires
the unitary representation $U\res_G$, so that $\cD_\Omega$ can be
defined for any unitary representation $(U,\cH)$ of $G$.

\begin{lem} \mlabel{lem:3.28} For $e \in \Omega \subeq G$ and a unitary
  representation $(U,\cH)$ of $G$, the following are equivalent:
  \begin{itemize}
  \item[\rm(a)] $\cD_\Omega$ and $\Delta^{1/2} \cD_\Omega$ are both dense in $\cH$. 
  \item[\rm(b)] For $\tilde \sV = \sV(h,\tilde U)$
    the real subspace $\tilde \sV_\Omega$ is cyclic in $\tilde \cH$.
  \end{itemize}
\end{lem}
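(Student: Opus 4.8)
The plan is to reduce the whole statement to a direct computation of the complex subspace $\tilde\sV_\Omega + i\tilde\sV_\Omega$, since, by definition, $\tilde\sV_\Omega$ is cyclic in $\tilde\cH = \cH\oplus\oline\cH$ precisely when this sum is dense. I would use the explicit graph description $\tilde\sV_\Omega = \{(\xi,\Delta^{1/2}\xi)\colon \xi\in\cD_\Omega\}$ recorded just above the lemma, together with the formula \eqref{eq:darel} for $\cD_\Omega$, which only involves the unitary representation $U$ and is exactly what makes the hypotheses of the lemma meaningful for a general $U$.

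First I would record that $\cD_\Omega$ is in fact a \emph{complex} linear subspace of $\cH$: in the form \eqref{eq:darel} every defining condition is complex-linear in $\xi$, because $\Delta^{1/2}$, $U(g)^{-1}$ and $U(\tau_h(g))^{-1}$ are all complex linear and $\cD(\Delta^{1/2})$ is a complex subspace (the case $g=e\in\Omega$ already forces $\xi\in\cD(\Delta^{1/2})$). Next I would compute the effect of multiplication by $i$ on $\tilde\cH$. Since $\oline\cH$ carries the flipped complex structure, scalar multiplication by $i$ sends $(\xi,w)\in\cH\oplus\oline\cH$ to $(i\xi,-iw)$. Applying this to a generator $(\xi,\Delta^{1/2}\xi)$ of $\tilde\sV_\Omega$ and substituting $\zeta:=i\xi$ (which again runs through $\cD_\Omega$, by complex linearity), I obtain
\[ i\,\tilde\sV_\Omega = \{(\zeta,-\Delta^{1/2}\zeta)\colon \zeta\in\cD_\Omega\}. \]

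Adding this to $\tilde\sV_\Omega$ and reparametrising by $a:=\xi+\zeta$, $b:=\xi-\zeta$ (a real-linear bijection of $\cD_\Omega\times\cD_\Omega$ onto itself) gives
\[ \tilde\sV_\Omega + i\,\tilde\sV_\Omega = \{(a,\Delta^{1/2}b)\colon a,b\in\cD_\Omega\} = \cD_\Omega \oplus \Delta^{1/2}\cD_\Omega, \]
where the first summand lies in $\cH\oplus\{0\}$ and the second in $\{0\}\oplus\oline\cH$. As these two summands sit in orthogonal closed subspaces of $\tilde\cH$, the closure of their sum is $\overline{\cD_\Omega}\oplus\overline{\Delta^{1/2}\cD_\Omega}$, and this equals $\tilde\cH$ if and only if $\overline{\cD_\Omega}=\cH$ and $\overline{\Delta^{1/2}\cD_\Omega}=\cH$ (density in $\oline\cH$ being the same as density in $\cH$). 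Thus $\tilde\sV_\Omega$ is cyclic exactly when both $\cD_\Omega$ and $\Delta^{1/2}\cD_\Omega$ are dense, which is the asserted equivalence (a) $\Leftrightarrow$ (b).

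The argument is elementary once the graph description is available, so I do not expect a genuine obstacle; the only point requiring care is the bookkeeping of the conjugate complex structure on $\oline\cH$, which produces the sign flip in the second component of $i\,\tilde\sV_\Omega$ and is exactly what decouples the two density conditions. A secondary point to keep honest is that the reparametrisation step uses complex (not merely real) linearity of $\cD_\Omega$, which is why I would establish that property at the outset.
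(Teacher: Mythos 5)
Your proposal is correct and follows essentially the same route as the paper: both establish the identity $\tilde\sV_\Omega + I\tilde\sV_\Omega = \cD_\Omega \oplus \Delta^{1/2}\cD_\Omega$ (the paper verifies the reverse inclusion via the decomposition $(v,\Delta^{1/2}w) = \bigl(\tfrac{v+w}{2},\Delta^{1/2}\tfrac{v+w}{2}\bigr) + I\cdot(\text{something})$, which is exactly your reparametrisation $a=\xi+\zeta$, $b=\xi-\zeta$) and then read off the equivalence from the orthogonality of the two summands. Your explicit preliminary check that $\cD_\Omega$ is a complex subspace is a point the paper uses implicitly (it needs $-\ie\xi\in\cD_\Omega$), so making it explicit is a harmless refinement rather than a deviation.
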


The assumptions of this lemma are satisfied if
    $\cD_\Omega$ is a core for $\Delta^{1/2}$.

\begin{prf} Let $I(v,w) = (\ie v,-\ie w)$ denote the complex structure on
  $\tilde\cH = \cH \oplus \oline\cH$.
  First we claim that
  \begin{equation}
    \label{eq:tildeveq}
    \tilde \sV_\Omega + I \tilde \sV_\Omega = \cD_\Omega \oplus \Delta^{1/2} \cD_\Omega.
  \end{equation}
  The lemma follows directly from this identity.
  
  The definition of $\cD_\Omega$ implies that
$\tilde \sV_\Omega \subeq \cD_\Omega \oplus \Delta^{1/2} \cD_\Omega.$ 
For $\xi \in \cD_\Omega$, we further observe that 
\[ I(\xi, \Delta^{1/2}\xi)
  = (\ie\xi, -\ie\Delta^{1/2}\xi)
  = (\ie\xi, \Delta^{1/2}(-\ie\xi))
  \in \cD_\Omega \oplus \Delta^{1/2}\cD_\Omega.\]
This proves ``$\subeq$'' in \eqref{eq:tildeveq}. 
For the converse inclusion, let
$v, w \in \cD_\Omega$. Then
\[ (v, \Delta^{1/2}w)
  = \Big(\frac{v + w}{2}, \Delta^{1/2}\Big(\frac{v+w}{2}\Big)\Big)
+  \Big(\frac{v - w}{2}, -\Delta^{1/2}\Big(\frac{v-w}{2}\Big)\Big)
\in \tilde \sV_\Omega + I \tilde \sV_\Omega. \]
This proves equality in \eqref{eq:tildeveq}. 
\end{prf}

\begin{prop} \mlabel{prop:3.29}  
  If $(U,\cH)$ is an antiunitary representation of $G_{\tau_h}$
  and $J = U(\tau_h)$, then
  $\cD_\Omega = \sV_\Omega + \ie \sV_\Omega$
  is  dense in $\cH$ if and only if $\tilde\sV_\Omega$ is cyclic in $\tilde\cH$. 
\end{prop}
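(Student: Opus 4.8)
The plan is to reduce the statement to Lemma~\ref{lem:3.28} together with the identity $\cD_\Omega = \sV_\Omega + \ie \sV_\Omega$ recorded in Remark~\ref{rem:bgl02}. By Lemma~\ref{lem:3.28}, cyclicity of $\tilde\sV_\Omega$ in $\tilde\cH$ is equivalent to the \emph{simultaneous} density of $\cD_\Omega$ and $\Delta^{1/2}\cD_\Omega$ in $\cH$. So the only thing to prove is that, in the present antiunitary setting, the density of $\Delta^{1/2}\cD_\Omega$ is automatic once $\cD_\Omega$ is dense; this is exactly the extra input provided by the conjugation $J = U(\tau_h)$.

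The key observation is that $\Delta^{1/2}$ acts on $\cD_\Omega$ as a restriction of $J$. Every $v \in \sV = \Fix(J\Delta^{1/2})$ satisfies $v \in \cD(\Delta^{1/2})$ and $J\Delta^{1/2}v = v$; applying $J$ gives $\Delta^{1/2}v = Jv$, and this holds in particular for $v \in \sV_\Omega \subeq \sV$. First I would use this, together with the antilinearity of $J$ and the linearity of $\Delta^{1/2}$, to compute for $v,w \in \sV_\Omega$
\[ \Delta^{1/2}(v + \ie w) = \Delta^{1/2}v + \ie\,\Delta^{1/2}w = Jv + \ie Jw = J(v - \ie w). \]
Since $\sV_\Omega$ is a real subspace we have $\sV_\Omega - \ie\sV_\Omega = \sV_\Omega + \ie\sV_\Omega = \cD_\Omega$, whence
\[ \Delta^{1/2}\cD_\Omega = J(\sV_\Omega - \ie \sV_\Omega) = J\,\cD_\Omega. \]

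Finally, because $J$ is a conjugation, hence an antilinear isometric bijection of $\cH$, the image $J\cD_\Omega$ is dense in $\cH$ if and only if $\cD_\Omega$ is dense. Combined with the previous display, this shows that $\Delta^{1/2}\cD_\Omega$ is dense exactly when $\cD_\Omega$ is dense, so the two density requirements in Lemma~\ref{lem:3.28}(a) collapse to the single condition that $\cD_\Omega$ is dense. By Lemma~\ref{lem:3.28} this is equivalent to cyclicity of $\tilde\sV_\Omega$, which is the assertion. I expect no genuine obstacle here: the only delicate points are the sign bookkeeping of the antilinearity of $J$ in the relation $\Delta^{1/2}(v+\ie w) = J(v-\ie w)$, and the inclusion $\cD_\Omega \subeq \cD(\Delta^{1/2})$ needed for $\Delta^{1/2}\cD_\Omega$ to be defined, which is immediate from the definition of $\cD_\Omega$ in \eqref{eq:darel}.
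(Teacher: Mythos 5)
Your proof is correct. The reduction to Lemma~\ref{lem:3.28} combined with the identity $\cD_\Omega = \sV_\Omega + \ie\,\sV_\Omega$ from Remark~\ref{rem:bgl02} is exactly the paper's strategy, and your verification of the hypotheses of that lemma is sound: since $e \in \Omega$ gives $\sV_\Omega \subeq \sV = \Fix(J\Delta^{1/2})$, every $v \in \sV_\Omega$ satisfies $\Delta^{1/2}v = Jv$, the antilinearity bookkeeping $\Delta^{1/2}(v+\ie w) = Jv + \ie Jw = J(v - \ie w)$ is right, and $\sV_\Omega - \ie\sV_\Omega = \cD_\Omega$ because $\sV_\Omega$ is a real subspace.

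Where you differ from the paper is in how the second density condition of Lemma~\ref{lem:3.28} is disposed of. The paper argues by duality: it shows $J\sV_\Omega = \sV'_{\tau_h^G(\Omega)}$, invokes Remark~\ref{rem:bgl02} a second time for the reflected data $(-h, \Delta^{-1})$, and then carries out a several-line computation identifying the auxiliary domain $\cD'_{\tau_h^G(\Omega)}$ with $\Delta^{1/2}\cD_\Omega$. Your observation that $\Delta^{1/2}$ restricted to $\cD_\Omega$ is simply $v + \ie w \mapsto J(v - \ie w)$, so that $\Delta^{1/2}\cD_\Omega = J\cD_\Omega$, reaches the same conclusion in one line and makes transparent \emph{why} the density of $\Delta^{1/2}\cD_\Omega$ is automatic in the antiunitary setting: the unbounded operator $\Delta^{1/2}$ acts on $\cD_\Omega$ as the surjective isometry $J$ composed with an antilinear reparametrization of $\cD_\Omega$. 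The paper's longer route has the side benefit of exhibiting the relation between $\cD_\Omega$ and the primed domain $\cD'_{\tau_h^G(\Omega)}$ attached to the commutant standard subspace, which is of independent interest, but for the statement at hand your argument is the more economical one.
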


\begin{prf}
  If $(U,\cH)$ is an antiunitary representation of $G_{\tau_h}$
  and $J = U(\tau_h)$, then
\[ J \sV_\Omega
  = \bigcap_{g \in \Omega} U(\tau_h(g)) J \sV 
  = \bigcap_{g \in \Omega} U(\tau_h(g)) \sV'
  = \sV_{\tau^G_h(\Omega)}'.\]
Therefore $\sV_\Omega$ is cyclic if and only if
$\sV'_{\tau^G_h(\Omega)}$ is cyclic.
By 
\eqref{eq:bgl02} in Remark~\ref{rem:bgl02},
the first of these conditions
is equivalent to the density of $\cD_\Omega$ in $\cH$, 
and the
second to the density of $\cD_{\tau^G_h(\Omega)}'$ in $\cH$, where
$\cD'_\Omega$ is the domain defined by exchanging $h$ and $-h$, resp.,
$\Delta$ and $\Delta^{-1}$. 
Concretely, we have
\begin{align*}
 \cD_{\tau^G_h(\Omega)}'
&= \{ \xi \in \cD(\Delta^{-1/2}) \:
  (\forall g \in \Omega)\ \tilde U(\tau_h(g))^{-1}(\xi,\Delta^{-1/2}\xi) \in
                      \Gamma(\Delta^{-1/2})\} \\
  &= \{ \xi \in \cD(\Delta^{-1/2}) \:
    (\forall g \in \Omega)\ \Delta^{-1/2}U(\tau_h(g))^{-1}\xi
    = U(g)^{-1} \Delta^{-1/2}\xi \} \\   
  &= \Delta^{1/2}\{ \eta \in \cD(\Delta^{1/2}) \: 
    (\forall g \in \Omega)\ \Delta^{-1/2}U(\tau_h(g))^{-1}\Delta^{1/2}\eta 
    = U(g)^{-1} \eta \} \\   
  &= \Delta^{1/2}\{ \eta \in \cD(\Delta^{1/2}) \: 
    (\forall g \in \Omega)\ U(\tau_h(g))^{-1}\Delta^{1/2}\eta 
    = \Delta^{1/2} U(g)^{-1} \eta \}\\
  &= \Delta^{1/2} \cD_\Omega.
\end{align*}
This shows that, for an antiunitary representation
of $G_{\tau_h}$, the density of $\Delta^{1/2} \cD_\Omega$ follows from
the density of $\cD_\Omega$.
Hence the assertion follows from Lemma~\ref{lem:3.28}.
\end{prf}

\begin{defn} \mlabel{def:unit-regular} A unitary
  representation $(U,\cH)$ of $G$ is called {\it $h$-regular}
  if there exists an $e$-neighborhood $N \subeq G$ such that,
  for $\Delta = e^{2\pi \ie \partial U(h)}$, 
  the subspaces
  $\cD_N$ from \eqref{eq:darel} and $\Delta^{1/2} \cD_N$
  are both dense in $\cH$ (cf.\ Lemma~\ref{lem:3.28}).   
\end{defn}

\begin{rem} (a) By Lemma~\ref{lem:3.28}, a unitary representation
  $(U,\cH)$ is $h$-regular if and only if
  its natural antiunitary extension $(\tilde U, \tilde \cH)$ is $h$-regular
  in the sense that, for some $e$-neighborhood $N \subeq G$, the subspace
  $\tilde \sV_N$ is cyclic (\cite{MN24}).

\nin   (b)  Proposition~\ref{prop:3.29} shows that,
  if $(U,\cH)$ extends to an antiunitary representation
  of $G_{\tau_h}$ on $\cH$, then $\tilde\sV_N$ is cyclic in $\tilde\cH$
  if and only if  $\sV_N$ is cyclic in $\cH$. Therefore regularity
  of unitary representations in
  the sense of Definition~\ref{def:unit-regular}
  is equivalent to the regularity concept
  for antiunitary representations from~\cite[Def.~4.1]{MN24}.   
\end{rem}

The following lemma, transcribed from \cite[Lemma~4.25]{Ne25}
concerning antiunitary representations, is a useful tool.

  \begin{lem} \mlabel{lem:g-intera}
    For a unitary  representation $(U,\cH)$ of $G$,
    the  following assertions hold:
  \begin{itemize}
  \item[\rm(a)] If $U = U_1 \oplus U_2$ is a direct sum, then
    $U$ is $h$-regular if and only if $U_1$ and $U_2$ are $h$-regular.
  \item[\rm(b)] If $U$ is $h$-regular, then every subrepresentation is
    $h$-regular. 
  \item[\rm(c)]  Assume that $G$ has at most countably many connected
    components and let  $U = \int_X^\oplus U_m \, d\mu(m)$ be a unitary
    direct integral  representation of $G$. Then $U$  is regular
    if and only if there exists an $e$-neighborhood $N \subeq G$
    such that, for $\mu$-almost every $m \in X$,
    the subspaces $\cD_N$ and $\Delta^{1/2} \cD_N$ are dense. 
      \end{itemize}
  \end{lem}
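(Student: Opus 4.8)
The plan is to prove (a), derive (b) from it at once, and then concentrate the real work on~(c). Throughout I use the description \eqref{eq:darel} of $\cD_\Omega$ together with the obvious monotonicity $\Omega \subeq \Omega' \Rarrow \cD_{\Omega'} \subeq \cD_\Omega$, since enlarging $\Omega$ only adds constraints. For~(a), write $\cH = \cH_1 \oplus \cH_2$ with each $\cH_j$ closed and $G$-invariant, and put $\Delta_j := e^{2\pi \ie \partial U_j(h)}$. As $\cH_j$ is invariant under $U(G)$ it is invariant under $U_h(\R)$, hence under $\partial U(h)$ and therefore under $\Delta^{1/2} = e^{\pi \ie \partial U(h)}$; it is also invariant under each $U(\tau_h(g))$ because $\tau_h(g) \in G$. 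Thus every condition in \eqref{eq:darel} splits componentwise, yielding the orthogonal decompositions $\cD_\Omega = \cD_\Omega(U_1) \oplus \cD_\Omega(U_2)$ and $\Delta^{1/2}\cD_\Omega = \Delta_1^{1/2}\cD_\Omega(U_1) \oplus \Delta_2^{1/2}\cD_\Omega(U_2)$. Since an orthogonal direct sum $D_1 \oplus D_2$ is dense exactly when both $D_j$ are, the density of $\cD_N$ and $\Delta^{1/2}\cD_N$ for a fixed $N$ is equivalent to the density of all four fibre pieces; this gives $h$-regularity of $U_1,U_2$ with the same $N$ in one direction, and conversely, given neighbourhoods $N_1,N_2$ I would take $N := N_1 \cap N_2$ and transport density to the smaller neighbourhood by monotonicity. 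Assertion (b) is then immediate: a subrepresentation is the restriction of $U$ to a closed $G$-invariant $\cK$, whose orthogonal complement $\cK^\perp$ is $G$-invariant by unitarity, so $U = U|_\cK \oplus U|_{\cK^\perp}$ and (a) forces $h$-regularity of $U|_\cK$.

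For (c) the strategy is to pass through the antiunitary reformulation of Lemma~\ref{lem:3.28}: for a fixed $e$-neighbourhood $N$, the two density statements in Definition~\ref{def:unit-regular} hold iff $\tilde\sV_N$ is cyclic in $\tilde\cH = \cH \oplus \oline\cH$. Under $U = \int_X^\oplus U_m\, d\mu(m)$ one has $\tilde U = \int_X^\oplus \tilde U_m\, d\mu(m)$ and $\tilde\Delta = \int_X^\oplus \tilde\Delta_m\, d\mu(m)$, so $\tilde\Delta^{1/2}$ and $\tilde J$ are decomposable and the standard subspace $\tilde\sV = \Fix(\tilde J \tilde\Delta^{1/2})$, being the graph of a decomposable operator, decomposes as a measurable field $m \mapsto \tilde\sV_m$. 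The first key step is the compatibility with the wedge intersection, $\tilde\sV_N = \int_X^\oplus (\tilde\sV_m)_N\, d\mu(m)$: here I would replace $\bigcap_{g \in N} \tilde U(g)\tilde\sV$ by the intersection over a countable dense $N_0 \subeq N$ — legitimate because $G$ has at most countably many components, hence is second countable, and the net $g \mapsto \tilde U(g)\tilde\sV$ is continuous — and then use that each $\tilde U(g)\tilde\sV = \int_X^\oplus \tilde U_m(g)\tilde\sV_m$ is decomposable, so that a countable intersection of such fields is again decomposable with the expected fibres. The second key step is the direct-integral principle that a measurable field of closed real subspaces $m \mapsto \mathsf{W}_m$ has $\int_X^\oplus \mathsf{W}_m\, d\mu$ cyclic iff $\mathsf{W}_m$ is cyclic for $\mu$-a.e.\ $m$, which rests on $\overline{\int_X^\oplus D_m} = \int_X^\oplus \overline{D_m}$ for measurable fields of subspaces. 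Combining the two steps and translating back through Lemma~\ref{lem:3.28} gives that $\cD_N$ and $\Delta^{1/2}\cD_N$ are dense iff, for $\mu$-a.e.\ $m$, $\cD_N$ and $\Delta_m^{1/2}\cD_N$ are dense in $\cH_m$; quantifying over $N$ yields the stated equivalence.

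The hard part will be the measure-theoretic bookkeeping in (c), concentrated in the identity $\tilde\sV_N = \int_X^\oplus (\tilde\sV_m)_N\, d\mu$. The defining intersection runs over the uncountable family $\{\tilde U(g)\tilde\sV\}_{g \in N}$ of closed real subspaces, so its reduction to a countable intersection genuinely requires the continuity of the modular net together with second countability of~$G$, and one must separately verify measurability of the fibre field $m \mapsto (\tilde\sV_m)_N$ before the cyclicity criterion can be applied.
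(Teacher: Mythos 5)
The paper itself contains no proof of this lemma: it is imported verbatim from \cite[Lemma~4.25]{Ne25}, so there is no in-paper argument to compare against. Judged on its own merits, your proposal is correct and follows what is essentially the only natural route. For (a), the decomposition $\cD_\Omega(U)=\cD_\Omega(U_1)\oplus\cD_\Omega(U_2)$ is right, using that $\cD(\Delta^{1/2})=\cD(\Delta_1^{1/2})\oplus\cD(\Delta_2^{1/2})$ and that every condition in \eqref{eq:darel} splits componentwise; the passage to $N=N_1\cap N_2$ via the monotonicity $\Omega\subeq\Omega'\Rightarrow\cD_{\Omega'}\subeq\cD_\Omega$ is exactly what is needed, and (b) is indeed immediate. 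For (c), routing through Lemma~\ref{lem:3.28} is the right move, since $\cD_N$ and $\Delta^{1/2}\cD_N$ are not closed and the cyclicity of the closed real subspace $\tilde\sV_N$ is what interacts well with direct integrals. Two small remarks on the bookkeeping you flag: the reduction of $\bigcap_{g\in N}$ to a countable dense $N_0\subeq N$ only needs closedness of $\tilde\sV$ together with continuity of the orbit maps $g\mapsto\tilde U(g)^{-1}\xi$ (not continuity of a ``net of subspaces''); and the identity $\bigcap_{g\in N_0}\int_X^\oplus \tilde U_m(g)\tilde\sV_m\,d\mu=\int_X^\oplus(\tilde\sV_m)_{N_0}\,d\mu$ follows from the elementary observation that a countable union of $\mu$-null exceptional sets is null, so no lattice-theoretic machinery is required beyond measurability of the fibre field (a countable infimum of measurable fields of projections). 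Finally, your closing step ``quantifying over $N$'' correctly captures the uniformity built into the statement of (c), namely that a single $e$-neighborhood $N$ must work for $\mu$-almost every $m$; this is weaker than, and should not be confused with, almost-everywhere $h$-regularity of the fibres.
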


\section{Generalities on Euler elements and representations}
\label{subsect:E}

The following proposition shows that every solvable Lie algebra with a non-central Euler element is the semidirect product of a 3-graded solvable Lie algebra via the action of $\R$ defined by its grading derivation. 

\begin{prop}
	\label{prop:E}
	Let $\g$ be a finite-dimensional solvable real
        Lie algebra with center~$\fz(\g)$. 
	If $h\in\g\setminus\fz(\g)$ is an Euler element, then there exists an ideal $\fn\subeq\g$ with $[\g,\g]\subeq\fn$ and $\g=\fn\rtimes\R h$. 
\end{prop}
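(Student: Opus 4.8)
The plan is to reduce the whole statement to the single claim that $h\notin[\g,\g]$, and then to establish that claim via the nilpotency of the commutator ideal of a solvable Lie algebra. First I would record the $3$-grading $\g=\g_{-1}\oplus\g_0\oplus\g_1$ attached to the Euler element, with $\g_j=\{x\in\g:[h,x]=jx\}$, and note that every $x\in\g_{\pm 1}$ satisfies $x=\pm[h,x]\in[\g,\g]$, so that $\g_1\oplus\g_{-1}\subeq[\g,\g]$. Since $h\notin\fz(\g)$ forces $\ad h\neq 0$, at least one of $\g_1,\g_{-1}$ is nonzero; this is the only place the hypothesis $h\notin\fz(\g)$ enters.

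Granting the claim $h\notin[\g,\g]$, the construction of $\fn$ is immediate and requires no further work. Any linear hyperplane $\fn\subeq\g$ with $[\g,\g]\subeq\fn$ is automatically an ideal, because $[\g,\fn]\subeq[\g,\g]\subeq\fn$. As the image $\bar h$ of $h$ in the abelianization $\g/[\g,\g]$ is nonzero, I can choose such a hyperplane $\fn$ with $h\notin\fn$ (take any codimension-one subspace of $\g/[\g,\g]$ missing $\bar h$ and pull it back). Then $\fn\cap\R h=\{0\}$ and $\fn+\R h=\g$, so $\g=\fn\rtimes\R h$, the action of $\R h$ on $\fn$ being given by $\ad h$.

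The decisive step, and the main obstacle, is proving $h\notin[\g,\g]$. Here I would invoke the classical fact that for a finite-dimensional solvable Lie algebra over a field of characteristic zero the commutator ideal $[\g,\g]$ is nilpotent: by Lie's theorem $\ad\g$ is simultaneously triangularizable over $\C$, whence $\ad([\g,\g])=[\ad\g,\ad\g]$ consists of nilpotent operators and Engel's theorem applies. Suppose, for contradiction, that $h\in[\g,\g]$. Since $[\g,\g]$ is a nilpotent Lie algebra, the operator $\ad_{[\g,\g]}h$ is nilpotent by Engel, and because $[\g,\g]$ is an ideal this coincides with the restriction $(\ad h)|_{[\g,\g]}$. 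But $\g_1\oplus\g_{-1}\subeq[\g,\g]$ is $\ad h$-invariant, and $\ad h$ acts on it with the nonzero eigenvalues $\pm 1$; as $\g_1\oplus\g_{-1}\neq\{0\}$, the restriction $(\ad h)|_{[\g,\g]}$ cannot be nilpotent, a contradiction. Hence $h\notin[\g,\g]$, which closes the argument. I would remark in passing that this is the infinitesimal refinement of the observation (used in the Problems section) that symmetric Euler elements, for which $h\in[\g,\g]$, never occur in the solvable setting.
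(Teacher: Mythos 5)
Your proof is correct and follows essentially the same route as the paper: both arguments reduce everything to showing $h\notin[\g,\g]$ via the nilpotency of the commutator ideal of a solvable Lie algebra, and then take any hyperplane containing $[\g,\g]$ and missing $h$, which is automatically an ideal. The only (minor) difference is in how the contradiction is extracted: the paper quotes that $\ad_\g x$ is nilpotent on all of $\g$ for every $x$ in $[\g,\g]$ (Bourbaki), which is incompatible with $\ad h$ being diagonalizable and nonzero, whereas you apply Engel inside $[\g,\g]$ and use the observation $\g_{\pm1}\subeq[\g,\g]$ to exhibit the eigenvalues $\pm1$ there — both are valid.
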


\begin{prf}
	The Lie algebra $\g$ is solvable, hence 
	$[\g,\g]$ is a nilpotent ideal of $\g$ 
	by \cite[Ch. 1, \S 5, no. 3, Cor. 5]{Bo98}, 
	hence $[\g,\g]$ is contained in the greatest nilpotent ideal of~$\g$. 
	Then 
	for every $x\in[\g,\g]$ the map $\ad x\colon\g\to\g$ is nilpotent 
	by \cite[Ch. 1, \S 5, no. 3, Cor. 7]{Bo98}. 
	Since $h\in\g\setminus\fz(\g)$ is an Euler element, we thus see that $h\not\in[\g,\g]$. 
	This implies that there exists a linear subspace $\fn\subseteq\g$ satisfying 
	$[\g,\g]\subeq\fn$, $\dim(\g/\fn)=1$, and $h\not\in\fn$. 
	In particular, $[\g,\fn]\subeq[\g,\g]\subeq\fn$, 
	hence  $\fn\subeq\g$  is a  hyperplane ideal and $\g=\fn\rtimes\R h$. 
\end{prf}

Lemmas \ref{lem:L1}, \ref{lem:L2}, and \ref{lem:L3} below are needed in the proof of Propositions \ref{sharp_prop} and \ref{prop:22May2024_new}. 
We recall the notation $N_G(H):=\{g\in G: gHg^{-1}= H\}$  for every group $G$ with a subgroup $H\subseteq G$. 
Also, we denote by $\la X\ra\subeq G$ the subgroup generated by a
subset $X\subeq G$. 

\begin{lem}
	\label{lem:L1}
	Let $(U,\cH)$ be an antiunitary representation of a group $G$. 
	\begin{enumerate}[{\rm(i)}]
		\item\label{lem:L1_item1}
		If $N\subeq G$ is a normal subgroup,  then the fixed-point set 
		$\cH^N:=\{v\in\cH : (\forall n\in N)\ U(n)v=v\}$  
		is a closed $G$-invariant subspace of~$\cH$. 
		\item\label{lem:L1_item2}
		If $G_0,G_\pm\subeq G$ are subgroups satisfying $G=\la G_-\cup G_0\cup G_+\ra$ and $G_0\subeq N_G(G_\pm)$, then $\cH^{G_-}\cap\cH^{G_+}$ is a closed $G$-invariant subspace of~$\cH$. 
	\end{enumerate}
\end{lem}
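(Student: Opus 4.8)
The final statement to prove is Lemma~\ref{lem:L1}, which has two parts about fixed-point subspaces of an antiunitary representation being closed and $G$-invariant.

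\medskip

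\textbf{Plan of proof.} The plan is to verify the two structural properties—closedness and $G$-invariance—directly from the definition of an antiunitary representation, using only the elementary algebraic relation $U(g)U(n)U(g)^{-1} = U(gng^{-1})$ together with continuity of the operators $U(g)$. The key subtlety to keep in mind throughout is that some operators $U(g)$ may be \emph{antilinear}, but since we are only proving that certain subspaces are preserved (not computing scalar multiples), antilinearity causes no difficulty: an antilinear isometry still maps closed real/complex subspaces to closed subspaces and preserves fixed-point sets under conjugation.

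\medskip

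For part~\ref{lem:L1_item1}, first I would observe that $\cH^N = \bigcap_{n \in N} \ker(U(n) - \id)$. Each operator $U(n)$ is an isometry (unitary or antiunitary), hence continuous, so each set $\{v : U(n)v = v\}$ is closed; being an intersection of closed sets, $\cH^N$ is closed. For $G$-invariance, let $v \in \cH^N$ and $g \in G$. For any $n \in N$, normality gives $g^{-1}ng \in N$, so
\[
U(n)U(g)v = U(g)U(g^{-1}ng)v = U(g)v,
\]
using $U(g^{-1}ng)v = v$. Hence $U(g)v \in \cH^N$, proving invariance. Note this computation is purely multiplicative and valid regardless of whether any of the operators involved are linear or antilinear.

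\medskip

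For part~\ref{lem:L1_item2}, the space $\cH^{G_-} \cap \cH^{G_+}$ is closed as an intersection of the closed subspaces provided by part~\ref{lem:L1_item1} (applied, or rather by the same fixed-point argument, noting $G_\pm$ need not be normal in $G$ but are normalized by $G_0$). The main point is $G$-invariance. Since $G = \langle G_- \cup G_0 \cup G_+\rangle$, it suffices to check invariance under each generating subgroup, and here the hypothesis $G_0 \subeq N_G(G_\pm)$ is exactly what makes the argument of part~\ref{lem:L1_item1} go through locally. Concretely, I would show that each of $G_-$, $G_0$, $G_+$ maps $\cH^{G_-}\cap\cH^{G_+}$ into itself: invariance under $G_\pm$ is immediate since elements of $\cH^{G_\pm}$ are fixed by $G_\pm$; invariance under $G_0$ follows because for $g \in G_0$ and $v \in \cH^{G_+}$, any $n \in G_+$ satisfies $g^{-1}ng \in G_+$ (as $g$ normalizes $G_+$), whence $U(n)U(g)v = U(g)U(g^{-1}ng)v = U(g)v$, and symmetrically for $G_-$. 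Since the stabilizer of the subspace is itself a subgroup of $G$ containing the generating set $G_-\cup G_0\cup G_+$, it equals all of $G$, giving full $G$-invariance. The only step requiring care is confirming that a single element of $G$, written as a word in the generators, preserves the subspace; this is automatic once each generator does, because the subspace-stabilizer is a subgroup.
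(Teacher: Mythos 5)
Your proposal is correct and follows essentially the same route as the paper: the same conjugation identity $U(n)U(g)v = U(g)U(g^{-1}ng)v$ for part (i), and for part (ii) the same reduction to invariance under each generating subgroup via the normalization hypothesis (the paper merely packages the $G_0$-step by applying part (i) inside the subgroup $G_\pm G_0$, where $G_\pm$ is normal, rather than redoing the computation directly).
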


\begin{prf}
	\ref{lem:L1_item1}
        For $g \in G$, $n \in N$ and $v \in \cH^N$, we have
        $U(n) U(g) v= U(g) U(g^{-1}ng)v = U(g) v,$
        so that $U(G)\cH^N\subseteq\cH^N$.
	
	\ref{lem:L1_item2}
	The subspace $\cH_0:=\cH^{G_-}\cap\cH^{G_+}$ is clearly $G_\pm$-invariant. 
	Then, since $G=\la G_-\cup G_0\cup G_+\ra$, it suffices to show that $\cH_0$ is $G_0$-invariant. 
	
	Since $G_0\subeq N_G(G_\pm)$, it follows that $G^\pm:=G_\pm G_0$ is a subgroup of $G$ and $G_\pm$ is a normal subgroup of $G^\pm$. 
	Then, by Assertion~\ref{lem:L1_item1}, the subspace $\cH^{G_\pm}$ is $G^\pm$-invariant. 
	Since $G_0\subseteq G^\pm$, it follows that  $\cH^{G_\pm}$ is $G_0$-invariant, 
	hence $\cH_0$ is $G_0$-invariant, and this completes the proof. 
\end{prf}

We omit the obvious proofs of the following two lemmas.

\begin{lem}
	\label{lem:L2}
	Let $(U,\cH)$ be a unitary representation of a connected Lie group $G$ 
	and consider an orthogonal direct sum decomposition
 $\cH=\cH_1\oplus\cH_2$ into $G$-invariant subspaces. 
Then, for every crowned Lie group $(G,\Xi,h)$, 
        we have $\cH_j^\omega(\Xi)=\cH_j\cap\cH^\omega(\Xi)$ for $j=1,2$,
        and the direct sum decomposition 
	$\cH^\omega(\Xi)=\cH_1^\omega(\Xi)\oplus\cH_2^\omega(\Xi)$. 
\end{lem}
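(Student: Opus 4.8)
The plan is to exploit the orthogonal projections attached to the decomposition. Since $\cH_1$ and $\cH_2$ are closed and $G$-invariant, the orthogonal projections $P_j \colon \cH \to \cH_j$ commute with $U(g)$ for every $g \in G$. As each $P_j$ is a bounded linear operator, post-composition with $P_j$ maps $\cO(\Xi,\cH)$ into itself, and it maps a real-analytic $\cH$-valued function on $G$ to a real-analytic $\cH_j$-valued one. These two trivial facts are the entire engine of the proof.

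The core step I would carry out is the following. Let $\xi \in \cH^\omega(\Xi)$ with holomorphic orbit-map extension $U^\xi \in \cO(\Xi,\cH)$, and set $\xi_j := P_j\xi$. Then $P_j \circ U^\xi \colon \Xi \to \cH_j$ is holomorphic, and for $g \in G$ one computes $(P_j \circ U^\xi)(g) = P_j U(g)\xi = U(g)P_j\xi = U(g)\xi_j$, using that $P_j$ commutes with the representation. Hence $P_j\circ U^\xi$ is a holomorphic extension to $\Xi$ of the orbit map of $\xi_j$ in the subrepresentation $(U\vert_{\cH_j},\cH_j)$, and since $\xi_j$ is moreover analytic (its orbit map on $G$ is real-analytic, being $P_j$ applied to that of $\xi$), this shows $\xi_j \in \cH_j^\omega(\Xi)$. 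As $\xi = \xi_1 + \xi_2$ with $\xi_j \in \cH_j^\omega(\Xi)$ and $\cH_1 \perp \cH_2$, this already yields $\cH^\omega(\Xi) \subseteq \cH_1^\omega(\Xi)\oplus\cH_2^\omega(\Xi)$, the sum being orthogonal and hence direct.

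Next I would dispatch the remaining inclusions, all immediate. The inclusion $\cH_j^\omega(\Xi)\subseteq\cH_j\cap\cH^\omega(\Xi)$ holds because a $\Xi$-analytic vector of the subrepresentation has its orbit map extend to a holomorphic map $\Xi\to\cH_j\subseteq\cH$, so it is $\Xi$-analytic for $U$ and lies in $\cH_j$. For the converse, if $\xi\in\cH_j\cap\cH^\omega(\Xi)$ then $\xi=P_j\xi=\xi_j$, and the computation above gives $\xi_j\in\cH_j^\omega(\Xi)$; this establishes $\cH_j^\omega(\Xi)=\cH_j\cap\cH^\omega(\Xi)$. Finally, $\cH_1^\omega(\Xi)\oplus\cH_2^\omega(\Xi)\subseteq\cH^\omega(\Xi)$ is clear, since a sum of two holomorphic extensions is a holomorphic extension of the corresponding orbit map. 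Combining these proves both assertions.

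I expect no substantive obstacle, in agreement with the paper's description of the proof as routine. The single point worth recording is that $P_j\circ U^\xi$ is indeed \emph{the} holomorphic extension of the orbit map of $\xi_j$, which is guaranteed by the uniqueness of analytic continuation from the totally real submanifold $G\subseteq\Xi$ already invoked throughout (for instance in (Cr1)); but even this is not strictly needed, since the definition of $\cH_j^\omega(\Xi)$ only requires the \emph{existence} of a holomorphic extension, which the construction supplies directly.
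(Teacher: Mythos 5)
Your proof is correct and is precisely the routine argument via the orthogonal projections $P_j$ commuting with $U(G)$ that the paper has in mind; indeed the paper explicitly omits the proof of this lemma as obvious. Nothing to add.
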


\begin{lem}
	\label{lem:L3}
        Let $G$ be a connected Lie group 
          and $h\in\g$ an Euler element for which $G_{\tau_h}$
          exists. 
          Consider anantiunitary representation $(U,\cH)$ of $G_{\tau_h}$
          and an orthogonal direct sum decomposition into
          $G_{\tau_h}$-invariant subspaces $\cH=\cH_1\oplus\cH_2$. 
For $J:=U(\tau_h)$ and $J_k:=J\vert_{\cH_k}$ for $k=1,2,$ 
we then have $\cH^{J_k}_{k,\ {\rm temp}}=\cH_k\cap \cH^J_{\rm temp}$ for $k=1,2$,  
	and the direct sum decomposition 
        \[ \cH^J_{\rm temp}=\cH^{J_1}_{1,\ {\rm temp}}\oplus
          \cH^{J_2}_{2,\ {\rm temp}}. \]
\end{lem}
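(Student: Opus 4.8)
The plan is to reduce everything to the growth-condition characterization of temperedness in Proposition~\ref{prop:temp1}(a), which is perfectly adapted to orthogonal decompositions. First I would record the structural facts that make the splitting compatible with all the data defining $\cH^J_{\rm temp}$. Writing $P_k$ for the orthogonal projection onto $\cH_k$, the $G_{\tau_h}$-invariance of $\cH_k$ means that $P_k$ commutes with $J = U(\tau_h)$ and with the one-parameter group $U_h(t) = U(\exp th)$; consequently $P_k$ commutes with the Stone generator $\partial U(h)$ and with each operator $e^{z \partial U(h)}$ on its domain, and $U_h$ restricts on $\cH_k$ to the one-parameter group with generator $\partial U_k(h) = \partial U(h)\vert_{\cH_k}$. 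Since $J\cH_k \subeq \cH_k$ with $J\vert_{\cH_k} = J_k$, the decomposition $v = P_1 v + P_2 v$ yields $\cH^J = \cH_1^{J_1} \oplus \cH_2^{J_2}$, and for $v \in \cH_k$ the orbit map $U_h^v$ and its analytic continuation coincide whether computed in $\cH$ or in $\cH_k$.

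Next I would establish $\cH^{J_k}_{k,{\rm temp}} = \cH_k \cap \cH^J_{\rm temp}$. For $v \in \cH_k$ we have $v \in \cH^J \Leftrightarrow v \in \cH_k^{J_k}$, the orbit maps $U_h^v = U_{k,h}^v$ agree, and $\|e^{\pm \ie t \partial U(h)} v\| = \|e^{\pm \ie t \partial U_k(h)} v\|$ for $|t| < \pi/2$; hence the criterion of Proposition~\ref{prop:temp1}(a) — analytic extension to $\cS_{\pm\pi/2}$ together with a bound $\|e^{\pm \ie t \partial U(h)} v\| \le C(\tfrac{\pi}{2} - |t|)^{-N}$ — holds for $v$ relative to $(U,J)$ exactly when it holds relative to $(U_k, J_k)$. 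This gives the first claim. The inclusion $\cH^{J_1}_{1,{\rm temp}} \oplus \cH^{J_2}_{2,{\rm temp}} \subeq \cH^J_{\rm temp}$ is then immediate, since $\cH^J_{\rm temp}$ is a real linear subspace. Conversely, for $v \in \cH^J_{\rm temp}$ and $v_k := P_k v$, the relation $\cH^J = \cH_1^{J_1} \oplus \cH_2^{J_2}$ gives $v_k \in \cH_k^{J_k}$; applying the bounded operator $P_k$ to the holomorphic extension $U_h^v \colon \cS_{\pm\pi/2} \to \cH$ shows that $U_h^{v_k} = P_k \circ U_h^v$ extends holomorphically, while $\|e^{\pm \ie t \partial U(h)} v_k\| = \|P_k e^{\pm \ie t \partial U(h)} v\| \le \|e^{\pm \ie t \partial U(h)} v\|$ transfers the growth bound from $v$ to $v_k$. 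Thus $v_k \in \cH^{J_k}_{k,{\rm temp}}$ and $v = v_1 + v_2$ lies in the asserted sum, which is direct because $\cH_1 \cap \cH_2 = \{0\}$.

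The only point that requires any care — and the reason the statement is nonetheless routine — is the preservation of holomorphy of the orbit map under $P_k$, together with the identification of the projected extension as the genuine orbit-map extension. This is handled by the two structural observations of the first paragraph: $P_k$ is a bounded linear operator, so its composition with a holomorphic $\cH$-valued map is again holomorphic (with values in the closed subspace $\cH_k$), and since $P_k$ commutes with $e^{z \partial U(h)}$, the function $P_k \circ U_h^v$ genuinely equals $U_{k,h}^{v_k}$ rather than merely agreeing with it on $\R$. With these facts in place both the identity and the direct-sum decomposition follow at once from Proposition~\ref{prop:temp1}(a).
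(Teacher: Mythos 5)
Your proof is correct. The paper actually omits the proof of this lemma as ``obvious,'' and your argument --- reducing to the growth-condition characterization of Proposition~\ref{prop:temp1}(a) and using that the orthogonal projections $P_k$ commute with $J$, with $U_h$, and hence with $e^{z\partial U(h)}$, so that $P_k\circ U_h^v$ is the genuine holomorphic extension of $U_{k,h}^{P_kv}$ and inherits the polynomial bound from $\|P_k\|\le 1$ --- is exactly the intended routine verification.
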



\begin{thebibliography}{aaaaaaaa}



\bibitem[AG90]{AG90}
    Akhiezer, D. N., and S. G. Gindikin, {\it On Stein 
      extensions of real symmetric spaces}, Math. Ann. {\bf 286} (1990), 1--12
  

\bibitem[Ar07]{Ar07} Armitage, D.~H., {\it 
    Entire functions that tend to zero on every line},
  Amer. Math. Monthly {\bf 114:3} (2007), 251--256 
  
\bibitem[BN24]{BN24} Belti\c t\u a, D., and K.-H.\ Neeb,
  {\it Holomorphic extension of one-parameter operator groups},
  Pure Appl. Funct. Anal. {\bf 9:6} (2024), 1483--1526;  arXiv:2304.09597

 
\bibitem[Bo98]{Bo98}
Bourbaki, N., ``Lie Groups and Lie Algebras,'' Chapters  1–3, 
  	Elements of Mathematics, 
  	Springer-Verlag, 
  	Berlin, 1998 


\bibitem[BR96]{BR96}
Bratteli, O., and D.~W.~Robinson, ``Operator Algebras and Quantum Statistical
Mechanics II,'' 2nd ed.,
Texts and Monographs in Physics, Springer-Verlag, 1996 

\bibitem[BGL02]{BGL02} Brunetti, R., Guido, D., and R. Longo, {\it 
Modular localization and Wigner particles}, Rev. Math. Phys. {\bf  14} (2002), 759--785


\bibitem[FN{\'O}25a]{FNO25a} Frahm, J., K.-H. Neeb, and G.~\'Olafsson,
{\it Nets of standard subspaces on 
 non-compactly causal   symmetric spaces}, in 
``Symmetry in Geometry and Analysis,'' Volume 2, 115--195;
Birkh\"auser/Springer, Singapore, 2025


\bibitem[FN{\'O}25b]{FNO25b} Frahm, J., K.-H. Neeb, and G.~\'Olafsson,
{\it Realization of unitary representations of the  Lorentz group
  on de Sitter space},
Indag. Math. {\bf 36:1} (2025), 61--113 


\bibitem[GN47]{GN47} Gelfand, I., and M.~Naimark, {\it Unitary
representations of the group of linear transformations of the
straight line}, Dolk. Akad. Nauk. SSSR {\bf 55} (1947), 567--570
        
\bibitem[GKS11]{GKS11} Gimplerlein, H., B. Kr\"otz, and H. Schlichtkrull,
  {\it Analytic representation theory of Lie groups: general theory
   and analytic globalization of Harish--Chandra modules},
  Compos. Math. {\bf 147:5} (2011), 1581--1607;
  corrigendum ibid. {\bf 153:1} (2017), 214--217;
   arXiv:1002.4345v2 


\bibitem[HC53]{HC53}
Harish-Chandra, 
{\it Representations of a semisimple Lie group on a Banach space. I}. 
Trans. Amer. Math. Soc. {\bf 75} (1953), 185--243.


\bibitem[HiNe12]{HiNe12}
Hilgert, J., and K.-H.~Neeb, 
``Structure and Geometry of Lie Groups,'' 
Springer Monographs in Mathematics. Springer, New York, 2012


\bibitem[Kr08]{Kr08} Kr\"otz, B., 
{\it Domains of holomorphy for irreducible unitary representations of simple Lie groups}, Invent. Math. {\bf 172:2} (2008), 277--288

\bibitem[Kr09]{Kr09}
Kr\"otz, B., 
{\it Crown theory for the upper half plane}. 
In: Ginzburg, D., E. Lapid, and D. Soudry (eds.), 
``Automorphic Forms and $L$-functions I. Global Aspects'', 
Contemp. Math. {\bf 488}, Israel Math. Conf. Proc., 
Amer. Math. Soc., Providence, RI, 2009, pp. 147--182.


\bibitem[KSt04]{KSt04} Kr\"otz, B and R. J. Stanton, {\it Holomorphic extensions of representations. I. Automorphic functions},
   Ann. of Math. (2) {\bf 159} (2004),  641--724

\bibitem[LL15]{LL15} Lechner G. and R.~Longo, {\it
    Localization in nets of standard spaces},
  Comm. Math. Phys. {\bf 336} (2015), 27--61
  

 \bibitem[Lo08]{Lo08}  Longo, R., {\it Real Hilbert subspaces, modular theory, 
$\SL(2, \R)$ and CFT} 
in ``Von Neumann Algebras in Sibiu'', 33-91, Theta Ser. Adv. Math. {\bf 10}, 
Theta, Bucharest, 2008  

\bibitem[Lo69]{Lo69} Loos, O., ``Symmetric Spaces I: General Theory,'' 
W. A. Benjamin, Inc., New York, Amsterdam, 1969


\bibitem[MMTS21]{MMTS21} Morinelli, V., Morsella, G., Stottmeister, A., and
  Y.~Tanimoto, {\it Scaling limits of lattice quantum fields by wavelets},
  Commun. in Math. Phys. {\bf 387:1} (2021), 299--360

\bibitem[MN21]{MN21} Morinelli, V., and K.-H. Neeb, 
{\it Covariant homogeneous nets of standard subspaces}, 
Comm. Math. Phys. {\bf 386} (2021), 305--358  


\bibitem[MN24]{MN24} Morinelli, V., and K.-H. Neeb, 
{\it From local nets to Euler elements}, 
Adv. Math. {\bf 458} (2024), part A, Paper No. 109960, 87 pp. 


\bibitem[MNO24]{MNO24} Morinelli, V., K.-H. Neeb, and G.\ \'Olafsson, 
  {\it Modular geodesics and wedge domains
    in general non-compactly causal symmetric spaces}, 
Annals of Global Analysis and Geometry
{\bf 65:1} (2024), Paper No. 9, 50pp 

\bibitem[MN\'O25]{MNO25} Morinelli, V., K.-H. Neeb, and G. \'Olafsson, {\it 
    Orthogonal pairs of Euler elements}, 
  in preparation

\bibitem[Ne00]{Ne00} Neeb, K.-H., ``Holomorphy and Convexity in Lie Theory,'' 
Expositions in Mathematics {\bf 28}, de Gruyter Verlag, Berlin, 2000 

\bibitem[Ne10]{Ne10} Neeb, K.-H.,  {\it 
On differentiable vectors for representations of infinite dimensional 
Lie groups}, J. Funct. Anal. {\bf 259} (2010), 2814--2855

\bibitem[Ne22]{Ne22} Neeb, K.-H., 
{\it Semigroups in 3-graded Lie groups and endomorphisms of standard 
  subspaces}, Kyoto Math. Journal {\bf 62:3} (2022), 577--613;
arXiv:OA:1912.13367 

\bibitem[Ne25]{Ne25} Neeb, K.-H., ``Nets of Real Subspaces on Homogeneous
  Spaces and Algebraic Quantum Field Theory,''
  IHP Lecture Notes, March 2025;
\url{www.math.fau.de/wp-content/uploads/sites/3/2025/03/qft-lect.pdf}


\bibitem[N\'O17]{NO17} Neeb, K.-H., and G.\, \'Olafsson,  {\it 
Antiunitary representations and modular theory}, 
in ``50th Sophus Lie Seminar'', Eds. K. Grabowska et al, 
Banach Center Publications {\bf 113}; pp.~291--362; 
arXiv:math-RT:1704.01336 


\bibitem[N\'O21]{NO21} Neeb, K.-H., and G.\, \'Olafsson, 
{\it Nets of standard subspaces on Lie groups}, 
Advances in Math. {\bf 384} (2021), 107715, arXiv:2006.09832

\bibitem[N\'O23]{NO23} Neeb, K.-H., and G.\, \'Olafsson, 
  {\it Wedge domains in non-compactly causal symmetric spaces},
 Geometriae Dedicata {\bf 217:2} (2023), Paper No. 30;
 arXiv:2205.07685
 
\bibitem[N\'O25]{NO25} Neeb, K.-H., and G.\, \'Olafsson, 
  {\it Locality on non-compactly causal symmetric spaces},
  in preparation

\bibitem[N\'O\O21]{NOO21} Neeb, K.-H., G.\, \'Olafsson, and B. \O{}rsted, 
{\it Standard subspaces of Hilbert spaces of holomorphic 
functions on tube domains}, Comm. Math. Phys. 
{\bf 386} (2021), 1437--1487; arXiv:2007.14797 


\bibitem[Om66]{Om66}
Omori, H., 
{\it Homomorphic images of Lie groups}. 
J. Math. Soc. Japan {\bf 18} (1966), 97--117


\bibitem[Si24]{Si24} Simon, T., {\it
    Polynomial growth of holomorphic extensions of orbit maps of K-finite vectors at the boundary of the crown},
  Preprint, arXiv:2403.13572
  
\bibitem[Su90]{Su90} Sugiura, M., 
``Unitary Representations and Harmonic Analysis. An Introduction''. 
Second edition. North-Holland Mathematical Library {\bf 44}. North-Holland Publishing Co., Amsterdam; Kodansha, Ltd., Tokyo, 1990 


\bibitem[Wa72]{Wa72}
Warner, G., ``Harmonic Analysis on Semi-simple Lie Groups. I'',  
Grundlehren der math. Wissenschaften {\bf 188}, 
Springer-Verlag, New York-Heidelberg, 1972 
  

\end{thebibliography}
\end{document}